\DeclareFontFamily{OT1}{pzc}{}
\DeclareFontShape{OT1}{pzc}{m}{it}{<-> s * [1.10] pzcmi7t}{}
\DeclareMathAlphabet{\mathpzc}{OT1}{pzc}{m}{it}
\DeclareMathOperator{\Pol}{Pol}
\DeclareMathOperator{\pol}{pol}
\newcommand{\FSch}[1]{\operatorname{FSch}_{#1}}
\newcommand{\DMod}[1]{\operatorname{Dmod}_{#1}}
\newcommand{\DModV}[1]{\operatorname{Dmod}_{#1}^{u}}
\newcommand{\DDModF}[1]{\mathbb{D}\operatorname{mod}_{#1}^{c}}
\newcommand{\DieuFor}{\mathbb D^{\operatorname{f}}}
\newcommand{\FGps}[1]{\operatorname{Fgps}_{#1}} % formal groups
\newcommand{\AbSch}[1]{\operatorname{AbSch}_{#1}} % abelian affine group schemes
\newcommand{\Hopf}[1]{\operatorname{Hopf}_{#1}}
\newcommand{\fHopf}[1]{\widehat{\operatorname{Hopf}}_{#1}}
\renewcommand{\ell}{\mathpzc{l}}
\DeclareMathOperator{\frob}{frob}
\DeclareMathOperator{\hull}{hull}
\newcommand{\modtensor}[2]{\rtimes}
\newcommand{\modcotensor}[2]{\hom}
\newcommand{\Reg}[1]{\mathcal{O}_{#1}}
\DeclareMathOperator{\alg}{alg}
\DeclareMathOperator{\Cof}{Cof}
\DeclareMathOperator{\nil}{nil}
\DeclareMathOperator{\ungr}{ungr}
\newtheorem{notation*}[equation]{Notation}
\title{Graded $p$-polar rings and the homology of $\Omega^n\Sigma^nX$}
\author{Tilman Bauer}
\date\today
\keywords{$p$-polar ring, formal group, affine group scheme, Witt vectors, Dieudonné theory, iterated loop spaces, Dyer-Lashof operations}
\subjclass[2010]{14L05,14L15,55P35,14L17,16T05,13A35}
\thanks{The author would like to thank the Mittag-Leffler Institute for supporting this research.} 
\begin{document}

\begin{abstract}
As an extension of previous ungraded work, we define a graded $p$-polar ring to be an analog of a graded commutative ring where multiplication is only allowed on $p$-tuples (instead of pairs) of elements of equal degree. We show that the free affine $p$-adic group scheme functor, as well as the free formal group functor, defined on $k$-algebras for a perfect field $k$ of characteristic $p$, factors through $p$-polar $k$-algebras. It follows that the same is true for any affine $p$-adic or formal group functor, in particular for the functor of $p$-typical Witt vectors. As an application, we show that the homology of the free $E_n$-algebra $H^*(\Omega^n\Sigma^n X;\F_p)$, as a Hopf algebra, only depends on the $p$-polar structure of $H^*(X;\F_p)$ in a functorial way. 
\end{abstract}

\maketitle

\section{Introduction} \label{sec:intro}

In \cite{bauer:p-polar}, I introduced the notion of a $p$-polar $k$-algebra, which, roughly speaking, is a $k$-module with a $p$-ary associative and commutative multiplication defined on it. Here $p$ is a prime and $k$ is any commutative ring. If $k$ is a perfect field of characteristic $p$, I showed that the free affine abelian $p$-adic group functor on $\Spec R$ for a $k$-algebra $R$ factors through the category of $p$-polar $k$-algebras, and as a result, so does the functor of points for \emph{every} $p$-adic group defined over $k$.

In this sequel, I prove the corresponding results for graded commutative $k$-algebras, where $k$ is an (ungraded) perfect field of characteristic $p$. Both the definition of a graded $p$-polar $k$-algebra and the proofs are quite distinct, but not independent, from the ungraded case, and the results are more striking in the presence of a grading. This is demonstrated by applications concerning the mod-$p$ homology of $\Omega^n\Sigma^nX$.

In this paper, all graded objects are understood to be nonnegatively graded. 

\begin{defn}
Let $k$ be an ungraded ring and $\Alg_k$ the category of graded-commutative $k$-algebras. Let $M_k$ denote the category of graded $k$-modules $A$ together with graded symmetric $k$-multilinear maps $\mu\colon (A_j)^{\otimes_{k} p} \to A_{jp}$ for all $j$, and let $\pol_p\colon \Alg_k \to M_k$ denote the forgetful functor from graded commutative $k$-algebras to $M_k$, where $\mu$ is given by $p$-fold multiplication.

A \emph{graded $p$-polar $k$-algebra} is an object $A \in M_k$ which is a subobject of $\pol_p(B)$ for some algebra $B \in \Alg_k$.
\end{defn}

This definition agrees with the one given in \cite{bauer:p-polar} when $A$ is concentrated in degree $0$ (Lemma~\ref{lemma:olddef}). 

We denote the category of graded $p$-polar $k$-algebra by $\Pol_p(k)$. We also let $\alg_k$ and $\pol_p(k)$ denote the full subcategories of objects in $\Alg_k$ and $\Pol_p(k)$, respectively, that are of finite length as $k$-modules.

The restriction functors $\pol_p\colon \Alg_k \to \Pol_p(k)$ and $\pol_p \colon \alg_k \to \pol_p(k)$ are called \emph{polarization}. 

Our main algebraic results parallel those in \cite{bauer:p-polar}. Let $\AbSch{k}$ denote the category of representable, abelian-group-valued functors on $\Alg_k$, which is the opposite category of the category $\Hopf{k}$ of abelian Hopf algebras over $k$, and let $\AbSch{k}^p$ denote the full subcategory of functors taking values in abelian pro-$p$-groups. Let $\FGps{k}$ be the category of ind-representable functors on $\alg_k$ taking values in abelian groups, opposite to the category $\fHopf{k}$ of formal Hopf algebras, that is, cocommutative cogroup objects in $\Pro-\alg_k$. We will refer to objects $G$ of $\AbSch{k}$ and $\FGps{k}$ as affine and formal groups, respectively, and denote their associated (formal) Hopf algebras by $\Reg{G}$.

\begin{thm}\label{thm:freeschemefactorization}
Let $k$ be a perfect field of characteristic $p$. Then the forgetful functors $\AbSch{k}^p \to \Alg_k^{\op}$ resp. $\FGps{k} \to (\Pro-\alg_k)^{\op}$ have left adjoints $\Fr$, and these factor through $\pol_p$:
\[
\begin{tikzcd}
\Alg_k^{\op} \arrow[r,"\Fr"] \ar[dr,swap,"\pol_p"] & \AbSch{k}^p;\\
& \Pol_p(k)^{\op} \ar[u,"\tilde \Fr"]\\
\end{tikzcd}
\qquad
\begin{tikzcd}
\alg_k^{\op} \arrow[r,"\Fr"] \ar[dr,swap,"\pol_p"] & \FGps{k}\\
& \pol_p(k). \ar[u,"\tilde \Fr"]\\
\end{tikzcd}
\]
\end{thm}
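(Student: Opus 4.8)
The plan is to realize $\Fr$ through its universal property and then to show that the resulting construction never uses more of the multiplication on $R$ than its iterated $p$-fold part. Unwinding the adjunction, $\Fr(R)$ must represent the functor $G \mapsto G(R) = \Hom_{\Alg_k}(\Reg{G},R)$ on $\AbSch{k}^p$, and likewise for $\FGps{k}$ in the formal case. Under the Cartier anti-equivalence between $\AbSch{k}^p$ and a suitable category of commutative, cocommutative Hopf algebras (reflecting the pro-$p$ condition), the forgetful functor to $\Alg_k^{\op}$ becomes the functor that forgets the comultiplication; its left adjoint $\Fr$ is therefore, up to $\Spec$, the \emph{cofree} such Hopf algebra $C(R)$ on the commutative algebra $R$. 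I would first settle existence of $C(R)$: both categories in play are locally presentable and the forgetful functor preserves limits and filtered colimits, so the adjoint functor theorem applies; alternatively one invokes the standard cofree-coalgebra constructions, which in the connected graded setting produce $C(R)$ quite explicitly. This reduces the theorem to a single assertion: the algebra $\Reg{\Fr(R)}$ and the universal point $\eta_R\colon \Reg{\Fr(R)} \to R$ depend on $R$ only through $\pol(R)$.

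For the factorization itself, the key observation is that the multiplication of $R$ enters the construction of $C(R)$ only through the datum making it a bialgebra, and that over a perfect field of characteristic $p$, subject to the pro-$p$ constraint, this datum is assembled entirely from Frobenius, i.e.\ from iterated $p$-th powers. Concretely, in a cocommutative Hopf algebra over $k$ the decisive extra structure on the primitives is the $p$-th power (restricted Lie) operation; I would show that this operation, pulled back along $\eta_R$, is exactly an iterate of $\mu\colon R_j^{\otimes_k p}\to R_{jp}$, and that no genuine $m$-fold product with $m$ not a power of $p$ is ever required. The cleanest way to pin this down is through Dieudonné theory: compute the Dieudonné module of $\Fr(R)$ and verify that the $F$- and $V$-actions, together with the generators coming from $R$, are expressible purely in terms of $\mu$. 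Because Frobenius multiplies internal degree by $p$, the graded bookkeeping matches the polar structure on the nose: $\mu$ is defined on equal-degree $p$-tuples and raises degree by the factor $p$, precisely the degree shift of the operations that occur.

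Granting this, I would construct $\tilde\Fr$ in two stages. On polarizations $\pol(B)$ one sets $\tilde\Fr(\pol(B)) = \Fr(B)$, which is well defined and functorial in $p$-polar maps because $\eta$ is transported through $\mu$ alone; a general graded $p$-polar algebra, being by definition a subobject of some $\pol(B)$, is then handled by passing to the corresponding limit in $\AbSch{k}^p$ and checking that the universal property is inherited. The formal-group statement follows by the parallel argument on the finite-dimensional categories $\alg_k$ and $\pol_p(k)$: here Dieudonné/Cartier duality exchanges the ind- and pro-completions and converts the cofree Hopf-algebra construction into the free formal group, with finite dimensionality keeping all linear duals well behaved, so the same Frobenius analysis transfers.

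The principal obstacle is exactly the central claim of the second paragraph: proving that the bialgebra multiplication on $C(R)$—equivalently the $F,V$-action on the Dieudonné module—factors through iterates of $\mu$ and genuinely never needs a full $m$-fold product for $m$ not a power of $p$. This is where characteristic $p$ and the pro-$p$ hypothesis are indispensable, and where the graded case departs from the ungraded treatment in \cite{bauer:p-polar}: one must show that the coupling of internal degrees $j$ and $pj$ effected by $\mu$ already encodes all of the multiplicative information the free construction can detect. A secondary but nontrivial technical point is the passage from polarizations to arbitrary graded $p$-polar algebras, where I must check that the factorization is compatible with the defining subobject limits and that $\tilde\Fr$ is thereby well defined and functorial.
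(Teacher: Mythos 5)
Your overall strategy for the unipotent/connected component---identify $\Fr$ with a cofree Hopf algebra construction, compute its Dieudonn\'e module, and show that the $F$- and $V$-structure only sees iterated $p$-fold products---is indeed the route the paper takes for that component. But as written the proposal has a genuine gap at exactly the point you flag as ``the principal obstacle'': you never supply the argument that the Dieudonn\'e module of $\Fr^u(R)$ depends only on $\pol(R)$. In the paper this is not a soft structural fact about restricted Lie operations on primitives (general unipotent Hopf algebras are not primitively generated, so the $p$-th power operation on primitives does not determine them); it is the concrete identification $D(\Fr^u(R))\cong CW^u(R)$, obtained from the adjunction $\Hom_{\AbSch{k}}(\Fr^u(R),W_n)\cong W_n(R)$, combined with the explicit observation that the polynomials defining Witt-vector addition, $F$, and $V$ are themselves $p$-polar, so that $W_n$ and $CW^u$ are defined on all of $\Pol_p(k)$ (Lemma~\ref{lemma:Wittofppolar}). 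That last point also repairs your problematic two-stage construction of $\tilde\Fr$: setting $\tilde\Fr(\pol(B))=\Fr(B)$ is circular (its well-definedness \emph{is} the theorem), and a general graded $p$-polar algebra is a subobject of some $\pol(B)$, not a limit of polarizations, so ``passing to the corresponding limit'' is not a construction. The paper instead sets $\tilde\Fr^c(A)=(\DieuFor)^{-1}(CW^c(A))$ directly for arbitrary $A\in\pol_p(k)$.

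A second omission: your single Dieudonn\'e-theoretic argument cannot cover all of $\AbSch{k}^p$ or $\FGps{k}$. The paper first splits these categories (Prop.~\ref{prop:groupsplitting}) into an odd part, an even unipotent (resp.\ connected) part, and a degree-zero multiplicative (resp.\ \'etale) part. The Dieudonn\'e equivalence $D\colon(\AbSch{k}^u)^{\op}\to\DModV{k}$ only exists on the even unipotent piece; the odd piece is handled by a purely module-level argument (the free odd group is an exterior algebra on the odd part of the underlying module, Prop.~\ref{prop:oddfree}), and the multiplicative/\'etale piece is concentrated in degree zero and requires the ungraded result of the prequel, which is proved by entirely different means. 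Without this decomposition your plan has no functor $D$ to apply to $\Fr(R)$, and the ``pro-$p$'' hypothesis---which you mention but do not use---enters precisely to control the multiplicative factor.
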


\begin{corollary}\label{thm:affineschemefactorization}
Let $k$ be a perfect field of characteristic $p$ and $M \in \AbSch{k}^p$ or $M \in \FGps{k}$. Then $M$ factors naturally through $\pol$:
\[
\begin{tikzcd}
\Alg_k \arrow[r,"M"] \ar[dr,swap,"\pol_p"] & \{\text{abelian pro-$p$-groups}\};\\
& \Pol_p(k) \ar[u,"\tilde M"]\\
\end{tikzcd}
\qquad
\begin{tikzcd}
\alg_k \arrow[r,"M"] \ar[dr,swap,"\pol_p"] & \Ab\\
& \pol_p(k). \ar[u,"\tilde M"]\\
\end{tikzcd}
\]
\end{corollary}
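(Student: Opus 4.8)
The plan is to deduce this formally from Theorem~\ref{thm:freeschemefactorization} by re-expressing the functor $M$ as ``maps out of the free object''. Consider first $M \in \AbSch{k}^p$, and let $A = U(M)$ be the underlying algebra of its representing Hopf algebra, where $U\colon \AbSch{k}^p \to \Alg_k^{\op}$ is the forgetful functor. By Yoneda, $M(R) = \Hom_{\Alg_k}(A,R)$ naturally in $R \in \Alg_k$, and this is the same as $\Hom_{\Alg_k^{\op}}(R, U(M))$. The adjunction $\Fr \dashv U$ supplied by the theorem then gives a natural isomorphism
\[
M(R) \;\cong\; \Hom_{\AbSch{k}^p}(\Fr(R), M),
\]
where on the right $R$ is regarded as an object of $\Alg_k^{\op}$. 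Since $M$ is an abelian group object, the right-hand side is naturally an abelian group, and I would check that this isomorphism is one of abelian groups (both structures being induced by the comultiplication of $A$, equivalently by the group-object structure of $M$), not merely of underlying sets.

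Next I would feed in the factorization $\Fr \cong \tilde\Fr \circ \pol$ from the theorem. Setting
\[
\tilde M \;:=\; \Hom_{\AbSch{k}^p}\!\bigl(\tilde\Fr(-),\, M\bigr)\colon \Pol_p(k) \longrightarrow \Ab,
\]
the double reversal of variance (objects of $\Pol_p(k)^{\op}$ are fed through $\tilde\Fr$, then the contravariant $\Hom(-,M)$ is applied) makes $\tilde M$ a covariant functor on $\Pol_p(k)$, manifestly functorial in $M$ as well. Combining the two displays yields $\tilde M \circ \pol \cong M$, which is exactly the asserted factorization, and its naturality is inherited from that of the adjunction isomorphism.

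The formal case is entirely parallel: for $M \in \FGps{k}$ with $U(M)$ its representing pro-object, ind-representability gives $M(R) = \Hom_{\Pro\text{-}\alg_k}(U(M), R)$ for $R \in \alg_k$, and the adjunction $\Fr \dashv U$ into $(\Pro\text{-}\alg_k)^{\op}$ again identifies $M(R) \cong \Hom_{\FGps{k}}(\Fr(R), M)$. Here the target is just $\Ab$, so one simply sets $\tilde M = \Hom_{\FGps{k}}(\tilde\Fr(-), M)$ with no further constraint.

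The only genuine content beyond this bookkeeping, and the step I expect to require care, is verifying that in the affine case $\tilde M$ actually lands in the subcategory of \emph{abelian pro-$p$-groups} rather than in all abelian groups. I would reduce this to finite coefficients: writing $M \cong \lim_i M_i$ as a cofiltered limit of finite abelian $p$-group schemes in $\AbSch{k}^p$ and using that $\Hom_{\AbSch{k}^p}(\tilde\Fr(P), -)$ carries this limit to $\lim_i \Hom_{\AbSch{k}^p}(\tilde\Fr(P), M_i)$, where each term is $p$-power torsion, so that the limit acquires a pro-$p$ structure on $\tilde M(P)$ compatible with the identification $\tilde M(\pol R) \cong M(R)$. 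The remaining points — naturality of the factorization and the compatibility of the group structures under the adjunction isomorphism — are routine diagram chases.
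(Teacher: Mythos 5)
Your proposal is correct and is essentially the paper's own argument: the paper likewise rewrites $M(R) = \Hom(\Spec R, M) = \Hom(\Fr(R),M)$ and then invokes the factorization $\Fr \cong \tilde\Fr\circ\pol$ from Theorem~\ref{thm:freeschemefactorization} to set $\tilde M = \Hom(\tilde\Fr(-),M)$. The additional verifications you flag (compatibility of the abelian group structures under the adjunction isomorphism, and the pro-$p$ topology on the values in the affine case) are sensible but routine points that the paper leaves implicit.
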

\begin{proof}
Given any $M \in \AbSch{k}^p$ or $M \in \FGps{k}$, we have that
\[
M(R) = \Hom(\Spec R, M) = \Hom(\Fr(R),M),
\]
where the last Hom group is of objects of $\AbSch{k}^p$ or $\FGps{k}$, respectively. The Corollary now follows from Thm.~\ref{thm:freeschemefactorization}.
\end{proof}

In complete analogy with the ungraded case, we define:
\begin{defn}
An affine group $G$ over a field $k$ is called \emph{unipotent} if its representing Hopf algebra $\Reg{G}$ is unipotent (conilpotent). The latter means that the reduced comultiplication $\tilde{\Reg{G}} \to \tilde {\Reg{G}} \otimes \tilde {\Reg{G}}$ is nilpotent, where $\tilde {\Reg{G}}$ is the cokernel of the unit map.
The group $G$ is called \emph{of multiplicative type} if $\Reg{G}$ is isomorphic to a group algebra, possibly after passing to a finite field extension of $k$.

The Cartier dual $G^*$ of a formal group is the affine group represented by the Hopf algebra $\Hom^c(\Reg{G},k)$ of continuous $k$-linear maps. Cartier duality is an anti-equivalence between $\FGps{k}$ and $\AbSch{k}$.

A formal group $G$ is called \emph{connected} if is $G^*$ unipotent, and \emph{\'etale} if $G^*$ is of multiplicative type.
\end{defn}

The proof of Thm.~\ref{thm:freeschemefactorization} is easily reduced to the case of unipotent affine groups and connected formal groups, respectively. To prove that case, we use Dieudonné theory and develop a theory of Witt vectors for graded $p$-polar algebras. We proceed to summarize the main points.

There are Dieudonné equivalences (cf. Thm.~\ref{thm:generaldieudonne})
\[
D\colon (\AbSch{k}^u)^\op \to \DModV{k}
\]
and
\[
\DieuFor\colon (\FGps{k}^c)^\op\to \DDModF{k}
\]
with certain categories of modules over the ring $W(k)$ of $p$-typical Witt vectors over $k$ with certain Frobenius and Verschiebung operations. More generally, for a graded ring $R$, let $W_n(R)$ denote the ring of graded $p$-typical Witt vectors of length $n \leq \infty$ and
\[
CW^u(R) = \colim(W_1(R) \xrightarrow{V} W_2(R) \xrightarrow{V} \cdots)
\]
the graded abelian group of unipotent co-Witt vectors. Moreover, for a finite-dimensional graded $k$-algebra $R$ with nilradical $\nil(R)$, let
\[
CW^c(R)_i = \begin{cases}
CW^c(R_0); & i=0\\
CW^{c,u}(R)_i; & i \neq 0
\end{cases}
\]
be the graded abelian group of connected co-Witt vectors, where the ungraded $CW^c(R_0)$ is the classical group of connected co-Witt vectors \cite[\textsection 2.4]{fontaine:groupes-divisibles} and $CW^{c,u}(R) < CW^u(R)$ consists of those unipotent co-Witt vectors all of whose entries lie in $\nil(R)$.

\begin{lemma}\label{lemma:wittofppolar}
The functors $W_n$ and $CW^u$ from $\Alg_k$ to $\DModV{k}$ factor naturally through $\Pol_p(k)$, and the functor $CW^c\colon \alg_k \to \DDModF{k}$ factors naturally through $\pol_p(k)$.
\end{lemma}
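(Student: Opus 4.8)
The plan is to deduce all three factorizations from Corollary~\ref{thm:affineschemefactorization}, treating the Dieudonné-module structure by naturality and the co-Witt cases by a colimit and a degreewise analysis. I would begin with $W_n$ for $n\le\infty$. The additive functor $R\mapsto W_n(R)$ is represented by the ($n$-dimensional) Witt Hopf algebra, and for finite $n$ it takes values in $p^n$-torsion abelian groups, while $W=W_\infty=\lim_n W_n$ takes values in genuine pro-$p$-groups; hence $W_n\in\AbSch{k}^p$. Corollary~\ref{thm:affineschemefactorization} applied to $M=W_n$ already factors the underlying-group functor through $\pol$, realized by $\tilde{W}_n=\Hom_{\AbSch{k}^p}(\tilde\Fr(-),W_n)$, an expression functorial in the group $W_n$. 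Consequently any morphism of $\AbSch{k}^p$ out of or into $W_n$ descends to $\tilde{W}_n$. The Frobenius $F$, the Verschiebung $V$, and multiplication by each $\lambda\in W(k)$ are natural transformations of additive functors, hence such morphisms; their defining relations in the Dieudonné ring are identities of natural transformations and are therefore preserved. This promotes the factorization to one of $\DModV{k}$-valued functors.

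For $CW^u=\colim_n(W_1\xrightarrow{V}W_2\xrightarrow{V}\cdots)$ the transition maps $V$ are morphisms in $\AbSch{k}^p$, so by the functoriality just invoked the $\tilde{W}_n$ form a filtered system in $\mathrm{Fun}(\Pol_p(k),\DModV{k})$. Since filtered colimits exist in $\DModV{k}$ and precomposition with $\pol$ preserves them, $\widetilde{CW^u}:=\colim_n\tilde{W}_n$ satisfies $\widetilde{CW^u}\circ\pol=CW^u$, as required.

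The substantive case is $CW^c\colon\alg_k\to\DDModFc{k}$, which I would analyze degreewise. In positive degrees $CW^c(R)_i=CW^{c,u}(R)_i$ is the subgroup of $CW^u(R)_i$ whose entries all lie in the radical $\tilde R$. For a finite-dimensional graded-commutative $k$-algebra the radical is the nilradical, and an element $x$ is nilpotent precisely when some iterated $p$-th power---formed by repeatedly applying the polar operation $\mu$ to equal arguments---vanishes. Thus the condition ``all entries lie in $\tilde R$'' is detected by the $p$-polar structure alone, so $CW^{c,u}$ is a polar-natural subfunctor of $CW^u$ and inherits the factorization. In degree $0$, $CW^c(R)_0=CW^c(R_0)$ is the classical connected co-Witt vector group of the ungraded algebra $R_0$, which factors through the ungraded $p$-polar structure of $R_0$ by \cite{bauer:p-polar}; by Lemma~\ref{lemma:olddef} that structure is exactly the degree-$0$ part of $\pol_p(R)$. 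Because $F$, $V$, and the $W(k)$-action send degree $0$ to degree $0$ and positive degrees to positive degrees, these pieces glue to a factorization of $CW^c$ through $\pol_p$.

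I expect the main obstacle to be precisely this last case: confirming that the radical condition is intrinsic to the polar operation via the nilpotence characterization, correctly importing the degree-$0$ contribution from the ungraded theory, and checking that Frobenius, Verschiebung, and the Witt-coefficient operations respect the degree-$0$/positive splitting, so that the separately constructed pieces assemble into a single $\DDModFc{k}$-valued functor.
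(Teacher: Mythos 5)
There is a genuine gap, and it is one of circularity rather than a local technical slip. Your entire argument rests on Corollary~\ref{thm:affineschemefactorization}, applied to $M=W_n$ and then propagated to the colimits and subfunctors. But that corollary is deduced from Theorem~\ref{thm:freeschemefactorization}, and the paper's proof of Theorem~\ref{thm:freeschemefactorization} explicitly takes Lemma~\ref{lemma:wittofppolar} as an input (together with Theorem~\ref{thm:dieudonneoffree}): the factorization of $\Fr^u$ and $\Fr^c$ through $p$-polar algebras is obtained precisely because their Dieudonné modules are $CW^u$ and $CW^c$, which must already be known to factor. So you cannot invoke the corollary here; the lemma is the base case that makes the corollary possible. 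The whole architecture of the paper is that $W_n$ and the co-Witt functors are the \emph{one} family of group schemes for which the factorization can be checked by hand, after which Dieudonné theory bootstraps it to everything else.

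The missing idea is therefore the direct verification, which is what the paper does in Section~\ref{sec:witt} (Lemma~\ref{lemma:Wittofppolar}): one defines $W_n(A)$ for a graded $p$-polar ring $A$ outright as $\prod_{i=0}^n A(i)$ and observes that the universal polynomials giving Witt addition, the $p$-polar multiplication, $F$, and $V$ are themselves ``$p$-polar'' --- they involve only $p$-fold products of elements of equal degree --- so they make sense on a $p$-polar input. The graded case reduces to the ungraded statement of \cite{bauer:p-polar} by forgetting the grading and checking that the relevant polynomials are homogeneous for the grading $|a_i|=jp^i$. Your treatment of $CW^u$ as a colimit along $V$ and of $CW^c$ via the radical (nilpotence being detected by iterated $p$-th powers, the degree-$0$ part imported from the ungraded theory) would then go through essentially as you wrote it, but only once the factorization of $W_n$ itself has been established by this direct computation rather than by appeal to the general theorem it is meant to feed.
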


We prove:
\begin{thm} \label{thm:dieudonneoffree}
For $R \in \Alg_k$, there are natural isomorphisms
\[
\DieuFor(\Fr^c(R)) \cong CW^c(R) \quad \text{for $R \in \alg_k$}
\]
and
\[
D(\Fr^u(R)) \cong CW^u(R) \quad \text{for $R \in \Alg_k$.}
\]
\end{thm}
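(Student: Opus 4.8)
The plan is to compute both Dieudonné functors by probing the free groups with Witt group schemes and then feeding in the universal property of $\Fr^u$ and $\Fr^c$. The starting point is the description of the Dieudonné equivalence in terms of maps into Witt vectors: for $G \in \AbSch{k}^u$ there is a natural isomorphism
\[
D(G) \cong \colim_n \Hom_{\AbSch{k}}(G, W_n),
\]
the colimit running over post-composition with the Verschiebung $V\colon W_n \to W_{n+1}$, with the $F$, $V$ and $W(k)$-actions induced by the corresponding operations on the (ind-)group of Witt covectors. If this is not literally the definition of $D$ used earlier, it follows from the equivalence together with the computation of the modules $D(W_n)$. The formal functor $\DieuFor$ admits an analogous description with the connected finite Witt group schemes replacing the $W_n$.

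First I would settle the unipotent case, which holds for all $R \in \Alg_k$. Composing the isomorphism above with the adjunction of Theorem~\ref{thm:freeschemefactorization}, namely $\Hom_{\AbSch{k}^u}(\Fr^u(R), G) \cong \Hom_{\Alg_k}(\Reg{G}, R)$, gives for each $n$ a natural isomorphism
\[
\Hom_{\AbSch{k}^u}(\Fr^u(R), W_n) \cong \Hom_{\Alg_k}(\Reg{W_n}, R) = W_n(R),
\]
the last equality because $\Reg{W_n}$ is the Witt coordinate algebra, so that graded algebra maps into $R$ are exactly the graded length-$n$ Witt vectors of $R$. These isomorphisms are compatible with the transition maps, the $V$ used in the colimit on the left matching the Witt shift $V\colon W_n(R) \to W_{n+1}(R)$ on the right, so passing to the colimit yields
\[
D(\Fr^u(R)) \cong \colim_n W_n(R) = CW^u(R).
\]
It then remains to check that this isomorphism is natural in $R$ and intertwines the $F$, $V$ and $W(k)$-module structures; this is automatic, since on both sides these operations come from the same structure maps on Witt vectors.

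The connected case, valid for finite-dimensional $R \in \alg_k$, runs in parallel. I would use the free formal group adjunction, which for a finite-dimensional test object $\Gamma$ reads $\Hom_{\FGps{k}^c}(\Fr^c(R), \Gamma) \cong \Hom_{\alg_k}(\Reg{\Gamma}, R)$, together with the analogous description of $\DieuFor$ via maps into the connected finite Witt group schemes. The finiteness of $R$ is what makes $\Fr^c(R)$ a genuine formal group and lets the relevant Hom-groups be identified with Witt vectors whose entries lie in the radical $\tilde R$ --- which in nonzero total degrees is automatic, since every positive-degree element of a finite-dimensional graded algebra is nilpotent --- thereby producing the part $CW^{c,u}(R)_i$ for $i \neq 0$. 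In degree $0$ the test objects instead detect the full \'etale--connected splitting of the ungraded algebra $R_0$, and the identification of the degree-$0$ piece with the classical connected co-Witt vectors $CW^c(R_0)$ is exactly Fontaine's computation \cite[\textsection 2.4]{fontaine:groupes-divisibles}; assembling the two cases reproduces the graded group $CW^c(R)$.

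The hard part will be the bookkeeping in the connected case: matching the special degree-$0$ behaviour, governed by Fontaine's classical connected co-Witt vectors, with the positive-degree behaviour, governed by the unipotent theory, and confirming that the case distinction built into the definition of $CW^c$ is precisely what this description via connected Witt group schemes delivers. A further point demanding care throughout is the grading: one must make sure the Witt group schemes and their formal analogues carry the grading for which graded algebra maps out of their coordinate rings recover the graded Witt vectors, and that Frobenius, Verschiebung and the $W(k)$-action are degree-preserving, so that the isomorphisms are genuinely maps in $\DModV{k}$ and $\DDModFc{k}$ rather than merely of graded abelian groups.
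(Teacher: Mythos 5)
Your proposal is correct and follows essentially the same route as the paper: both cases are the one-line computation combining the explicit description of the Dieudonn\'e functors from Thm.~\ref{thm:generaldieudonne} ($D(G)=\colim_n\Hom_{\AbSch{k}}(G,W_n)$, and $\DieuFor$ represented by $CW^c_k$) with the adjunction defining $\Fr^u$ and $\Fr^c$ and the representability of (co-)Witt vectors. The extra bookkeeping you flag (gradings, the degree-$0$ versus positive-degree split of $CW^c$) is folded by the paper into the definitions of the graded Witt functors in Section~\ref{sec:witt} and into Thm.~\ref{thm:generaldieudonne}, so the proof itself does not revisit it.
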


Here, $\Fr^c$ and $\Fr^u$ denote the free connective formal and free unipotent affine group functors, respectively. This theorem together with Lemma~\ref{lemma:wittofppolar} are the main ingredients for proving Thm.~\ref{thm:freeschemefactorization} in the cases of unipotent affine and connected formal groups.

As an algebraic application, we show:
\begin{thm}\label{thm:lambdapcofree}
The affine group scheme of $p$-typical Witt vectors is the free unipotent abelian group scheme on the $p$-polar affine line, i.e. on the free $p$-polar algebra on a single generator. 
\end{thm}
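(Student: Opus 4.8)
The plan is to prove the statement through the Dieudonné equivalence rather than by manipulating group schemes directly. Write $P$ for the free $p$-polar $k$-algebra on a single generator $x$ in degree $0$, so that the assertion is that $\tilde\Fr^u(P)\cong W$, where $W$ denotes the scheme of $p$-typical Witt vectors viewed as a unipotent abelian affine group. Since $D\colon(\AbSch{k}^u)^\op\to\DModV{k}$ is an anti-equivalence, it suffices to produce a natural isomorphism of Dieudonné modules $D(W)\cong D(\tilde\Fr^u(P))$.

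First I would compute the right-hand side from the results already at hand. By Thm.~\ref{thm:freeschemefactorization} the free functor factors as $\Fr^u\simeq\tilde\Fr^u\circ\pol_p$, and by Lemma~\ref{lemma:wittofppolar} the co-Witt functor factors through $\pol_p$ as $\widetilde{CW^u}\circ\pol_p$ for a functor $\widetilde{CW^u}\colon\Pol_p(k)\to\DModV{k}$. Feeding these two factorizations into the natural isomorphism $D(\Fr^u(R))\cong CW^u(R)$ of Thm.~\ref{thm:dieudonneoffree}, one obtains a natural isomorphism $D(\tilde\Fr^u(A))\cong\widetilde{CW^u}(A)$ for every $p$-polar algebra $A$, and applying this to $A=P$ identifies $D(\tilde\Fr^u(P))$ with $\widetilde{CW^u}(P)$. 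A point to verify here is that the comparison isomorphism of Thm.~\ref{thm:dieudonneoffree} is itself compatible with the two polarizations, so that it genuinely descends to $\Pol_p(k)$.

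It then remains to identify $\widetilde{CW^u}(P)$ with the classical Dieudonné module $D(W)$. Here I would first make $P$ explicit: the weights reachable from $x$ under iterated $p$-fold multiplication are exactly the positive integers congruent to $1$ modulo $p-1$, each occurring once, so that $P$ is the sub-$p$-polar algebra of $\pol_p(k[t])$ spanned by $t,t^p,t^{2p-1},t^{3p-2},\dots$. On the other side, $D(W)$ is computed classically as $\colim_n D(W_n)$ for the truncated Witt groups $W_n$; it is the $V$-divisible, $V$-locally nilpotent module generated by a tower $\gamma_n$ with $V\gamma_n=\gamma_{n-1}$ and $F\gamma_n=p\gamma_{n+1}$, and for the natural scaling ($\mathbb{G}_m$-)action it is supported precisely in weights that are powers of $p$. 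I would set up the comparison by sending $x$ to the bottom class $\gamma_1$ (the first Witt coordinate $x_0$), realizing the Frobenius $F$ by the polar $p$-th power operation $t\mapsto t^p$, the Verschiebung $V$ by the structure maps of the co-Witt colimit, and the $W(k)$-action by Teichmüller homotheties.

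The main obstacle is exactly this last identification. The naive expectation -- that $\widetilde{CW^u}(P)$ is spanned by ``Teichmüller vectors'' of all the monomials $t^{1+m(p-1)}$ of $P$ -- is false and would yield a module far larger than $D(W)$, containing spurious classes in weights such as $2p-1$ that are not powers of $p$. What must be shown is that passage through the factorization of Lemma~\ref{lemma:wittofppolar} forces precisely the relations collapsing these extra monomials, so that the only surviving generators are the $F$-tower $t,t^p,t^{p^2},\dots$ of the single generator together with its Verschiebungs; this is where the Dieudonné relations $FV=VF=p$ and the $V$-local nilpotence built into $\DModV{k}$ must do the work. Finally I would check that the comparison is natural in the forgetful data, so that it is an isomorphism of Dieudonné modules and hence, by the anti-equivalence $D$, descends to the asserted isomorphism $\tilde\Fr^u(P)\cong W$.
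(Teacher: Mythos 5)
There is a genuine gap, and it originates in your very first sentence: you place the generator of the ``$p$-polar affine line'' in degree $0$. The theorem is a statement about \emph{graded} objects with the generator in positive degree $j$ (the paper works with $\Lambda_p=k[\theta_{j,0},\theta_{j,1},\dots]$, $|\theta_{j,i}|=jp^i$). In the graded setting, only elements of equal degree may be multiplied, so the free $p$-polar algebra on a generator $x$ with $|x|>0$ is $k\langle x^{p^i}\mid i\geq 0\rangle$, supported only in degrees $jp^i$ --- not the ungraded object $k\langle t^{1+m(p-1)}\rangle$ you describe. This misidentification is what manufactures your ``main obstacle'': with the degree-$0$ generator, $CW^u(P)$ genuinely \emph{is} far larger than $D(W)$, and the hoped-for collapse of the spurious monomials $t^{2p-1},t^{3p-2},\dots$ does not occur, because the factorization of Lemma~\ref{lemma:wittofppolar} satisfies $\widetilde{CW^u}(\pol_p R)=CW^u(R)$ on the nose and imposes no new relations. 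In other words, the ungraded version of the statement you set out to prove is false, and the step you flag as ``what must be shown'' cannot be shown. With the correct graded $P=k\langle x^{p^i}\rangle$ the obstacle disappears: the example in Section~\ref{sec:witt} computes $CW^u(P)_{jp^i}\cong W_i(k)$ with $V$ the restriction and $F$ multiplication by $p$, with nothing to collapse. Even then, your route still requires an independent computation of $D(W)$ for the \emph{graded} Witt scheme to compare against, which you assert (``computed classically'') but do not supply, and which the paper never needs.

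For comparison, the paper avoids Dieudonn\'e theory here entirely: it identifies $P(\Lambda_p)=k\langle\theta_{j,0}^{p^k}\rangle$ as the free $p$-polar algebra on one generator and exhibits it as a $p$-polar retract of $V(\Lambda_p)=k\langle\theta_{j,i}^{p^k}\rangle$; the adjoint of the retraction is a Hopf algebra map $q\colon\Lambda_p\to\Cof^u(P\Lambda_p)$ which is injective because it is injective on primitives (a general fact for unipotent Hopf algebras) and surjective by a dimension count. Your overall strategy --- comparing $D(\tilde\Fr{}^u(P))\cong CW^u(P)$ with $D(W)$ via Theorems~\ref{thm:generaldieudonne} and~\ref{thm:dieudonneoffree} --- is a legitimate alternative once $P$ is correctly identified and $D(W)$ is actually computed, but as written the argument does not go through.
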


This is an analog of the fact that the Hopf algebra representing the big Witt vectors, the algebra $\Lambda$ of symmetric functions, is cofree on a polynomial ring in one generator \cite{hazewinkel:cofree-coalgebras}. The corresponding Hopf algebra $\Lambda_p$ for $p$-typical Witt vectors is definitely not cofree on a (non-polar) algebra.

We conclude with a topological application. Let $X$ be a connected space. We show:

\begin{thm}\label{thm:omegansigman}
Let $p$ be a prime and $n \geq 1$. Then the functor $D_n\colon \Top \to \Hopf{\F_p}$ defined by $D_n(X) = H^*(\Omega^{n+1}\Sigma^{n+1} X;\F_p)$ factors through the forgetful functor $\Top \xrightarrow{H^*(-;\F_p)} \Alg_{\F_p} \xrightarrow{\pol} \Pol_p(\F_p)$.
\end{thm}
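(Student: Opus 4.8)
The plan is to reduce the assertion to the factorization of the free connected formal group through polarization, so that the only genuinely topological input is the recognition of $H_*(\Omega^{n+1}\Sigma^{n+1}X;\F_p)$ as such a free object, after which Theorem~\ref{thm:freeschemefactorization} and Lemma~\ref{lemma:wittofppolar} do the work.

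First I would invoke the approximation theorem of May: since $X$ is connected, the canonical map from the free $E_{n+1}$-space $C_{n+1}X\to\Omega^{n+1}\Sigma^{n+1}X$ is a homology isomorphism, so $D_n(X)=H^*(C_{n+1}X;\F_p)$. By the Cohen--Lada--May computation, $H_*(C_{n+1}X;\F_p)$ is the free allowable Dyer--Lashof Hopf algebra on the coaugmented cocommutative coalgebra $\tilde H_*(X;\F_p)$. Being a free construction on a positively graded coalgebra, it is connected and conilpotent; hence $H_*(C_{n+1}X;\F_p)$ and its continuous dual $D_n(X)$ represent, respectively, a connected formal group $G_X\in\FGps{\F_p}$ and its Cartier dual, naturally in $X$. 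In particular the étale and multiplicative summands of Prop.~\ref{prop:groupsplitting}, being concentrated in degree zero, vanish.

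Next I would split $G_X$ by Prop.~\ref{prop:groupsplitting} (for $p>2$; for $p=2$ there is no odd summand and one argues directly with the connected factor). The odd summand is, by Prop.~\ref{prop:oddfree}, the exterior formal group $\hat\Fr$ on the odd part of the graded vector space underlying $H^*(X)$, so it depends only on that vector space, a fortiori only on $\pol H^*(X)$. For the evenly graded connected factor $G_X^c$ I would compute its Dieudonné module and identify it, through the dictionary turning Dyer--Lashof operations into Frobenius and Verschiebung, with the connected co-Witt vectors $CW^c$ of the even part of $H^*(X)$---a finite Witt truncation when $n$ is finite---which is exactly the module Theorem~\ref{thm:dieudonneoffree} attaches to the free connected group $\Fr^c$. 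Since $CW^c$ and the finite $W_m$ factor through $\pol_p(\F_p)$ by Lemma~\ref{lemma:wittofppolar} and $\DieuFor$ is an equivalence, $G_X^c$---and hence $D_n(X)$---factors through $\pol H^*(X)$.

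The step I expect to be the main obstacle is exactly this Dyer--Lashof-to-Dieudonné identification, compatibly with the truncation for finite $n$; note that $D_n$ takes values in plain Hopf algebras, so in the end only the underlying Hopf structure, not the Dyer--Lashof action, must be controlled. Its mechanism is a normalization of the comultiplication by changing the generators $Q^I x$ by decomposables: a non-primitive coproduct contribution arising from a splitting into fewer than $p$ equal-degree pieces can always be absorbed, the governing multinomial coefficient being a unit in $\F_p$, whereas the $p$-fold equal splitting cannot, the obstruction being $p!\equiv 0$. What survives is precisely the comultiplication dual to the polar product $\mu$, and by the Cartan formula, once the weight-one generators have been normalized in this way, the coproducts on all $Q^I x$ are again governed solely by $\mu$. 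Matching this normalized structure with the Frobenius and Verschiebung on $CW^c$, and checking that the instability bound from the loop index $n+1$ produces the Witt length of Lemma~\ref{lemma:wittofppolar}, is the technical heart; granting it, the theorem follows formally from the results above.
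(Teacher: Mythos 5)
Your overall strategy (reduce to Theorem~\ref{thm:freeschemefactorization}, with Cohen--Lada--May supplying the freeness statement) is the right starting point, but the execution breaks at the crucial step, and the gap is not merely the deferred ``technical heart''---the identification you are aiming for is false. $H_*(\Omega^{n+1}\Sigma^{n+1}X;\F_p)$ is the free $R_n$-Hopf algebra on the \emph{coalgebra} $H_*(X;\F_p)$; as an algebra it is essentially free on all admissible Dyer--Lashof monomials $Q^I x$ and iterated Browder brackets, not on $H_*(X)$ alone. Hence the formal group $G_X$ is not $\Fr^c$ applied to (the even part of) $H^*(X)$, and its Dieudonn\'e module is far larger than $CW^c(H^*(X)^{\mathrm{ev}})$: already for $X=S^2$ the group $CW^c(H^*(S^2;\F_p))$ is one-dimensional in positive degrees, while $H_*(\Omega^{n+1}S^{n+3};\F_p)$ has algebra generators $Q^I\iota$ in infinitely many degrees. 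The same defect afflicts your odd summand: the odd primitives of $H_*(\Omega^{n+1}\Sigma^{n+1}X)$ include classes $\beta Q^I x$ and Browder brackets of even classes, so that factor is not $\hat\Fr$ on the odd part of $H^*(X)$. Your remark that only the underlying Hopf structure need be controlled does not rescue this, because the Dyer--Lashof classes are algebra generators of that underlying Hopf algebra.

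The missing idea---and the route the paper takes---is to factor the \emph{free functor} rather than to analyze the resulting group directly. By Lemma~\ref{lemma:freeRnHopfalgebra} the forgetful functor $\Hopf{R_n}\to\Hopf{\F_p}$ has a left adjoint $F$, so the Cohen--Lada--May free functor $W$ on coalgebras decomposes as $W=F\circ\Fr$, where $\Fr$ is the free Hopf algebra on a coalgebra. Dually $D_n(X)\cong F^*(\Cof(H^*(X)))$, and it is only the \emph{inner} functor $\Cof$ (equivalently $\Fr$ on the formal side) that is shown to factor through $\pol$, via Theorem~\ref{thm:freeschemefactorization}; the Dieudonn\'e/co-Witt identification of Theorem~\ref{thm:dieudonneoffree} enters at that inner stage only, and $F^*$ is simply composed on top afterwards. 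To salvage your outline you would need to replace the claim $\DieuFor(G_X^c)\cong CW^c(\cdots)$ by this two-step factorization, after which the normalization-of-generators argument you sketch becomes unnecessary; the only genuinely new input is the existence of the adjoint $F$, which the paper obtains from the adjoint functor theorem.
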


In particular, the Hopf algebra $H^*(\Omega^n\Sigma^n X;\F_p)$ depends only on the $p$-polar structure of $H^*(X;\F_p)$. 

We remark that under a finite-type condition, using work of Kuhn \cite{kuhn:quasi-shuffle} (and no $p$-polar algebras), we even have:

\begin{thm}\label{cor:kuhnthm}
For $n \geq 0$ and spaces $X$, $Y$ of finite type, $D_n(X) \cong D_n(Y)$ as Hopf algebras if and only if $H^*(X;\F_p) \cong H^*(Y;\F_p)$ as vector spaces with $p$th power maps. In particular, $D_n(X)$ only depends on the stable homotopy type of $X$, up to (noncanonical) isomorphism.
\end{thm}

\subsection{Outline of the paper}
In Section~\ref{sec:gradedppolar}, we study the formal properties of graded $p$-polar algebras, give examples, and prove a recognition criterion. In Section~\ref{sec:splittings}, we show that the proof of Thm.~\ref{thm:freeschemefactorization} splits into a number of distinct cases according to splittings of affine and formal schemes, respectively, and prove all cases except the case of unipotent affine and connected formal schemes. In Section~\ref{sec:witt}, we define Witt vectors for graded $p$-polar algebras as an extension of the usual $p$-typical Witt vectors of rings, and use those in Section~\ref{sec:dieudonne}, using graded Dieudonné theory, to prove the remaining cases of Thm.~\ref{thm:freeschemefactorization} along with Lemma~\ref{lemma:wittofppolar} and Thm.~\ref{thm:dieudonneoffree}. Section~\ref{sec:applications} treats algebraic properties and applications, including the proof of Thm.~\ref{thm:lambdapcofree}, and the final section~\ref{sec:iteratedloops} contains the topological applications to iterated loop spaces, including the proofs of Thms.~\ref{thm:omegansigman} and \ref{cor:kuhnthm}. 

\section{\texorpdfstring{Graded $p$-polar algebras}{Graded p-polar algebras}}\label{sec:gradedppolar}

We begin the study of $p$-polar $k$-algebras with some observations and examples.

A graded $p$-polar $k$-algebra $A$ does not supply a map $A^{\otimes_k p} \to A$ -- only elements of the same degree can be multiplied together. It will be useful to consider this stronger notion as well.

\begin{defn}
A \emph{strong graded $p$-polar $k$-algebra} is a graded $k$-module $A$ with a map $\mu\colon A^{\otimes_k p} \to A$ making the underlying ungraded $k$-module $A^{\ungr}$ into an ungraded $p$-polar algebra.
\end{defn}

If $A$ is merely a (weak) graded $p$-polar $k$-algebra, $A^{\ungr}$ cannot be given a $p$-polar $k$-algebra structure in general.

\begin{remark}
The embeddability $i\colon A \to \pol_p(B)$ of a graded $p$-polar algebra into a graded commutative algebra can be thought of as saying that for any elements $x_1,\dots,x_n \in A$ and a scalar $\lambda \in k$, there is at most one way of multiplying them together using $\mu$ (up to sign); namely, the element $\lambda i(x_1) \cdots i(x_n) \in B$, which is either in the image of $i$ or it isn't. 
\end{remark}

\begin{example}
The submodule $k\langle x^{p^i} \mid i \geq 0\rangle \subset k[x]$ is a sub-$p$-polar algebra of $\pol(k[x])$, where $|x|>0$. It is the free $p$-polar algebra on a generator $x$. This shows that in contrast to the ungraded case, even for $p=2$, $p$-polar algebras are much weaker structures than genuine algebras.
\end{example}

\begin{remark}
Let $\Mod_F$ denote the category of \emph{$F$-modules}, i.~e. graded $k$-modules $M$ with a linear map $F\colon M_{i} \to M_{pi}$ satisfying $F(\alpha x) = \alpha^p F(x)$ for $\alpha \in k$, $x \in M_{i}$. ($F=0$ if $p$ and $i$ are odd.) If $p \in k^\times$, the forgetful functor
\[
U_F\colon \Pol_p(k) \to \Mod_F; \quad M \to (M,F) \text{ with } F(x) = \mu(x,\dots,x)
\]
is an equivalence when restricted to evenly graded $M$ by what is classically called polarization. E.g., for $p=2$, $\mu(x,y) = \frac12 (F(x+y)-F(x)-F(y))$. However, the situation of most interest to us is when $k$ is a field of characteristic $p$. In that case, $U_F$ always has a left adjoint but is an equivalence only between the subcategories of $\Pol_p(k)$ and $\Mod_F$ whose underlying modules are at most $1$-dimensional in each degree.
\end{remark}

\begin{remark}\label{remark:ptypicalsplitting}
For any $p$-polar algebra $A$, the submodule $A_{(j)} = \bigoplus_{i \geq 0} A_{jp^i}$ is a polar subalgebra and direct factor, and thus
\[
A \cong A_0 \times \prod_{p \nmid j} A_{(j)}.
\]

We call a $p$-polar algebra of this form $A_{(j)}$ a \emph{$p$-typical} polar algebra of degree~$j$. The inclusion of such $p$-polar algebras into all $p$-polar algebras is biadjoint to the functor $A \mapsto A_{(j)}$. We write $\pol_{(j)}(A) := (\pol(A))_{(j)}$. This is a sub-$p$-polar algebra, but not a subalgebra of $A$.

In particular, if $p>2$, we see that every graded $p$-polar $k$-algebra splits as a product $A=A_{\odd} \times A_{\ev}$, where $A_{\odd} = \bigoplus_n A_{2n+1}$ and $A_{\ev} = \bigoplus_n A_{2n}$.
\end{remark}

\begin{example}
Consider the stable splitting $P=\Sigma^\infty (\CP^\infty)\hat{{}_p} \simeq P_1 \vee \cdots \vee P_{p-1}$ of the $p$-completion of complex projective space \cite{mcgibbon:rank-1-loops} with
\[
H^*(P_i) = \langle x^j \mid j \equiv i \pmod{p-1}\rangle < \F_p[x] = H^*(\CP^\infty,\F_p).
\]
By \cite{sullivan:genetics}, $P_{p-1}$ is the suspension spectrum of a space (the classifying space of the Sullivan sphere), but all other $P_i$ are not.
However, the maps
\[
P_i \hookrightarrow P = \Sigma^\infty (\CP^\infty)\hat{{}_p} \xrightarrow{\Sigma^\infty \Delta} \Sigma^\infty ((\CP^\infty)\hat{{}_p})^p \simeq P \wedge \cdots \wedge P \twoheadrightarrow P_i \wedge \cdots \wedge P_i
\]
induce (strong) graded $p$-polar algebra structures on $\tilde H^*(P_i)$, and the splitting $P\simeq P_1 \vee \cdots \vee P_{p-1}$ induces a splitting of $p$-polar algebras in cohomology. In fact,
\[
H^*(P_i;\F_p) \cong \bigoplus_{\substack{j \equiv i \;\;(p-1)\\p \nmid j}} \pol_{(j)} H^*(P;\F_p)
\]
So while the $P_i$ are not spaces for $i \neq p-1$, they do retain some likeness to spaces in that their cohomologies are $p$-polar algebras. This raises the question whether there is a reasonable notion of a ``$p$-polar space'' somewhere between connective spectra and spaces.
\end{example}

In a way, the definition of a $p$-polar $k$-algebra is wrong in the same way the definition of a manifold as a submanifold of $\mathbf R^n$ is wrong; it mentions an enveloping object which is not part of the data. The following proposition remedies this to a certain extent:

\begin{prop}\label{prop:hullembedding}
The functor $\pol\colon \Alg_k \to M_k$ has a left adjoint given for $A \in M_k$ by
\[
A \mapsto \hull(A) = \Sym(A)/(x_1\cdots x_p - \mu(x_1,\dots,x_p) \mid x_1,\dots,x_p \in A_i).
\]
An object $A \in M_k$ is a $p$-polar $k$-algebra iff the unit map of this adjunction, $u\colon A \to \pol(\hull(A))$, is injective.
\end{prop}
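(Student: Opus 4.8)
The plan is to establish the adjunction first and then obtain the characterization of $p$-polar algebras as a formal consequence of the universal property of the unit. I would begin by checking that $\hull(A)$ is well-defined as a graded-commutative algebra. When $x_1,\dots,x_p \in A_i$, the product $x_1\cdots x_p$ in $\Sym(A)$ and the element $\mu(x_1,\dots,x_p) \in A_{pi} \subset \Sym(A)$ both lie in degree $pi$, so each relation $x_1\cdots x_p - \mu(x_1,\dots,x_p)$ is homogeneous and the relation ideal is homogeneous; hence $\hull(A)$ is a graded-commutative $k$-algebra. Naturality of $A \mapsto \hull(A)$ is immediate, since an $M_k$-morphism $A \to A'$ sends generators of the relation ideal to generators of the relation ideal.

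Next I would identify the two Hom-sets. By the universal property of the free graded-commutative algebra, graded algebra maps $\Sym(A) \to B$ correspond bijectively to graded $k$-linear maps $f\colon A \to B$. Such an $f$ descends along $\Sym(A) \twoheadrightarrow \hull(A)$ exactly when it annihilates every generator of the relation ideal, i.e. when $f(x_1)\cdots f(x_p) = f(\mu(x_1,\dots,x_p))$ for all $x_1,\dots,x_p \in A_i$ and all $i$. Since $\mu_{\pol(B)}$ is precisely $p$-fold multiplication in $B$, this is exactly the condition that $f$ be a morphism $A \to \pol(B)$ in $M_k$. The resulting bijection
\[
\Hom_{\Alg_k}(\hull(A),B) \;\cong\; \Hom_{M_k}(A,\pol(B))
\]
is natural in both variables, which gives the adjunction $\hull \dashv \pol$; its unit is the $M_k$-morphism $u\colon A \to \pol(\hull(A))$ adjoint to the identity of $\hull(A)$.

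For the second statement, one direction is immediate: if $u$ is injective then $A$ is a subobject of $\pol(\hull(A))$ with $\hull(A) \in \Alg_k$, so $A$ is $p$-polar by definition. For the converse, suppose $A$ is $p$-polar, witnessed by an injective $M_k$-morphism $i\colon A \hookrightarrow \pol(B)$ for some $B \in \Alg_k$. By the universal property of the unit, $i$ factors as $i = \pol(g)\circ u$ for the adjoint algebra map $g\colon \hull(A) \to B$. Since $i$ is injective, $u$ must be injective, completing the argument.

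The content is almost entirely formal once the adjunction is set up, so I expect no serious obstacle. The two points requiring genuine care are the graded bookkeeping and the meaning of ``subobject.'' For the former, one must confirm that imposing the relation for a single ordering of $x_1,\dots,x_p$ already forces it for all orderings: both the product in $\Sym(A)$ and the symmetric map $\mu$ transform by the same Koszul sign under permutation, so the relations for different orderings are scalar multiples of one another and generate the same ideal. For the latter, I would note that a subobject in the sense of the definition is an injective $M_k$-morphism, equivalently a monomorphism, so that the implication ``$i$ injective $\Rightarrow u$ injective'' is exactly what is needed.
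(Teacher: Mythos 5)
Your proposal is correct and follows essentially the same route as the paper: the paper declares the existence of the left adjoint ``obvious'' (you simply spell out the free-algebra/quotient argument it has in mind), and your treatment of the two implications for $u$ --- injectivity gives $p$-polarity by definition, and conversely any embedding $i\colon A \hookrightarrow \pol(B)$ factors through the unit, forcing $u$ to be injective --- is exactly the paper's argument.
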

\begin{proof}
The existence and structure of the left adjoint, $\hull$, is obvious.

If $u$ is injective, $A$ is a $p$-polar algebra by definition. Conversely, if $A$ is $p$-polar, say $i\colon A \hookrightarrow \pol(B)$ for some $B \in \Alg_k$, then by the universal property of the left adjoint, there is a factorization
\[
\begin{tikzcd}
A \arrow[dr,hook,"i",swap] \arrow[r,"u"] & \pol(\hull(A)) \arrow[d,dashed]\\
& \pol(B).
\end{tikzcd}
\]
Since $i$ is injective, so is $u$.
\end{proof}

For the sake of explicitness, we will now give a list of axioms for objects of $M_k$ to be a $p$-polar algebra.

\begin{lemma}\label{lemma:olddef}
If $A=A_0$ (i.e. in the ungraded case), the definition of a $p$-polar $k$-algebra agrees with the one given in \cite{bauer:p-polar}; i.e., $A \in M_k$ is $p$-polar iff 
\begin{description}
	\item[(ASSOC)]  \label{lemma:olddef:homogeneousassoc}
For the symmetric group $\Sigma_{2p-1}$ permuting the elements $x_1,\dots,x_p$, $y_2,\dots,y_p \in A$,
\[
\mu(\mu(x_1,\dots,x_p),y_2,\dots,y_p)
\]
is $\Sigma_{2p-1}$-invariant.
\end{description}
\end{lemma}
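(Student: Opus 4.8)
The statement has two parts, and I would dispatch the compatibility with \cite{bauer:p-polar} first. When $A=A_0$, an object of $M_k$ is simply a symmetric $k$-multilinear map $\mu\colon A^{\otimes p}\to A$, and a subobject of $\pol(B)$ for a graded algebra $B$ is the same thing as a $\mu$-compatible embedding of $A$ into the ungraded algebra $B_0$. This is exactly the enveloping-algebra notion used in \cite{bauer:p-polar}, so all the content lies in the equivalence with (ASSOC).

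Necessity of (ASSOC) is immediate. If $i\colon A\hookrightarrow\pol(B)$, then $\mu(x_1,\dots,x_p)=i(x_1)\cdots i(x_p)$ in the commutative algebra $B$, so
\[
\mu(\mu(x_1,\dots,x_p),y_2,\dots,y_p)=i(x_1)\cdots i(x_p)\,i(y_2)\cdots i(y_p)
\]
is symmetric in all $2p-1$ arguments because $B$ is commutative, and injectivity of $i$ transports this symmetry back to $A$.

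For sufficiency I would invoke the adjunction $\hull\dashv\pol$ of the preceding Proposition, by which $A$ is $p$-polar iff the unit $u\colon A\to\pol(\hull(A))$ is injective, where $\hull(A)=\Sym(A)/I$ and $I$ is generated by the elements $x_1\cdots x_p-\mu(x_1,\dots,x_p)$. Since $u$ is the composite $A=\Sym^1(A)\hookrightarrow\Sym(A)\twoheadrightarrow\hull(A)$, injectivity amounts to the single assertion $I\cap\Sym^1(A)=0$. I would analyse $I$ through the evident rewriting system that replaces a product of $p$ elements of $A$ by their $\mu$; it terminates because each step lowers polynomial degree by $p-1$, so any degree-compatible monomial order is a well-order. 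The degree-$1$ relations this system creates come from the overlap of two $p$-fold products sharing a single factor, i.e.\ from the $2p-1$ arguments $x_1,\dots,x_p,y_2,\dots,y_p$; the two contractions of this multiset are $\mu(\mu(x_1,\dots,x_p),y_2,\dots,y_p)$ and $\mu(x_1,\mu(x_2,\dots,x_p,y_2),y_3,\dots,y_p)$, which coincide \emph{exactly} by (ASSOC).

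The main obstacle is to upgrade this single cancellation to a genuine confluence statement, showing that (ASSOC) forces \emph{every} element of $I\cap\Sym^1(A)$ to vanish. For $p>2$ there are further overlaps in which two $p$-fold products share $p-1$ factors; these produce elements of $I$ of reduced degree $2$, and one must verify that, after further multiplication and reduction, such elements can never combine into a nonzero element of $A$. I expect this to follow from Newman's lemma once one checks that the overlap displayed above is the only essential critical pair; the resulting normal form then has degree-$1$ part canonically equal to $A$, which gives injectivity of $u$ and hence embeddability. Equivalently, this confluence is precisely the structural characterisation already established in the ungraded setting of \cite{bauer:p-polar}, which may simply be cited.
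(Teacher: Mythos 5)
Your proof is correct and follows essentially the same route as the paper: both reduce the nontrivial direction to injectivity of the unit $A\to\pol(\hull(A))$, and the sufficiency argument ultimately rests on the fact, proved in \cite{bauer:p-polar}, that (ASSOC) forces a unique product of any $1+i(p-1)$ elements of $A$ --- which is exactly the confluence statement your rewriting-system sketch leaves open and then proposes to cite. The paper indeed just cites that result rather than re-running the critical-pair analysis, so your fallback coincides with its argument.
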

\begin{proof}
Clearly, axiom (ASSOC) holds if $A$ is $p$-polar. Conversely, suppose that (ASSOC) holds, and let $i\colon A \to \hull(A)$ be the adjunction unit. In \cite{bauer:p-polar}, it was shown that (ASSOC) implies that for any $i \geq 0$ and any set of $1+i(p-1)$ elements $x_1,\dots,x_{1+i(p-1)}$, there is exactly one way of multiplying the $x_i$ together using $\mu$, and any other number of elements cannot be multiplied together. Write $\mu(x_1,\dots,x_{1+i(p-1)})$ for this unique product. Let $j\colon \bigoplus_{i=0}^\infty \Sym^{1+i(p-1)}(A) \hookrightarrow \Sym(A)$ be the inclusion and
\begin{align*}
\hull(A) \supseteq B =& \left(\bigoplus_{i=0}^\infty \Sym^{1+i(p-1)}(A)/j^{-1}((x_1\cdots x_p - \mu(x_1,\dots,x_p)))\right)\\
\cong & \left(\bigoplus_{i=0}^\infty \Sym^{1+i(p-1)}(A)\right)/\langle x_1\cdots x_{1+i(p-1)} - \mu(x_1,\dots,x_{1+i(p-1)})\rangle\\
\cong & \; A
\end{align*}
We have thus exhibited $A$ as a subobject of $\hull(A)$.
\end{proof}

Now we consider the general, graded case.

\begin{lemma}\label{lemma:nonperiodicchar}
Let $A$ be an object in $M_k$. Then $A$ is a $p$-polar $k$-algebra iff
\begin{enumerate}
	\item \label{lemma:nonperiodicchar:zero} $A_0$ is a $p$-polar $k$-algebra, and
	\item \label{lemma:nonperiodicchar:assoc} For the symmetric group $\Sigma_{2p}$ permuting the elements $x_1,\dots,x_{2p} \in A_j$, and elements $y_3,\dots,y_p \in A_{pj}$ (none if $p=2$),
	\[
	\mu(\mu(x_1,\dots,x_p),\mu(x_{p+1},\dots,x_{2p}),y_3,\dots,y_p)
	\]
is $\Sigma_{2p}$-invariant (up to multiplication with the sign of the permutation if $j$ is odd).
\end{enumerate}
\end{lemma}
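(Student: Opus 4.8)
The plan is to prove both directions, dispatching necessity quickly and concentrating all the effort on sufficiency, which I would reduce to a coherence statement for iterated applications of $\mu$.

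\emph{Necessity.} Suppose $A$ is $p$-polar, say $i\colon A \hookrightarrow \pol(B)$ for some $B \in \Alg_k$. Restricting to degree $0$ gives an embedding $A_0 \hookrightarrow \pol(B_0)$, so $A_0$ is $p$-polar and (1) holds. For (2), note that $i$ is a morphism in $M_k$ and that $\mu$ on $\pol(B)$ is the $p$-fold product of $B$; hence the image under $i$ of the displayed element is $i(x_1)\cdots i(x_{2p})\,i(y_3)\cdots i(y_p)\in B$, which is $\Sigma_{2p}$-invariant up to the sign of the permutation when $j$ is odd, by graded-commutativity and associativity of $B$. Since $i$ is injective, (2) follows.

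\emph{Sufficiency: reduction to one $p$-typical piece.} Assume (1) and (2). By Remark~\ref{remark:ptypicalsplitting} we have $A \cong A_0 \times \prod_{p\nmid j} A_{(j)}$ in $M_k$. Since $\pol$ is a right adjoint (to $\hull$) it preserves products, so a product of polarizations is a polarization and a product of embeddings is an embedding; thus $A$ is $p$-polar as soon as each factor is. The factor $A_0$ is $p$-polar by (1) (using Lemma~\ref{lemma:olddef} for its internal characterization if desired), so I may fix $j\neq 0$ with $p\nmid j$ and must show that $C := A_{(j)} = \bigoplus_{i\geq 0} A_{jp^i}$ is $p$-polar. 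By the adjunction $\hull \dashv \pol$ it suffices to prove that the unit $C \to \pol(\hull(C))$ is injective, i.e. that $C \to \hull(C) = \Sym(C)/(z_1\cdots z_p - \mu(z_1,\dots,z_p))$ is injective in each internal degree $jp^i$.

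\emph{The retraction.} A monomial of $\Sym(C)$ in internal degree $jp^i$ is a product of factors $z_\ell \in A_{jp^{a_\ell}}$ with $\sum_\ell p^{a_\ell}=p^i$; a short base-$p$ carrying argument shows that any such multiset of levels can be reduced to the single level $\{i\}$ by repeatedly merging $p$ equal levels into one higher level, and that this merging process is confluent \emph{at the level of multisets}. Thus every monomial of internal degree $jp^i$ is \emph{combinable}, and I would define a candidate retraction $r\colon \hull(C)\to C$ sending each combinable monomial to its fully reduced value $\tau(\cdot)\in A_{jp^i}$ (extended linearly, since $\mu$ lands in $A$), and sending monomials whose internal degree is not of the form $jp^i$ to $0$. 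The elements of $C$ lie in $\Sym^1(C)$ and are already reduced, so $r$ restricts to the identity on $C$; and $r$ descends to $\hull(C)$ because merging $z_1\cdots z_p \rightsquigarrow \mu(z_1,\dots,z_p)$ is itself one of the allowed reduction steps, so $r$ agrees on the two sides of each defining relation. Hence injectivity of the unit reduces entirely to the \emph{well-definedness of $\tau$}: that the fully reduced value of a combinable monomial is independent of the bracketing.

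\emph{The heart, and the main obstacle.} I would prove well-definedness of $\tau$ by induction on $i$. In any complete bracketing of a degree-$jp^i$ monomial the root is a $\mu$ of $p$ children, each a combinable monomial of degree $jp^{i-1}$ on a sub-multiset of the factors; by the inductive hypothesis each child has a well-defined value, so the only remaining freedom is the partition of the factors into the $p$ children and their ordering. Reordering the children is absorbed by the symmetry of $\mu$. To change the partition, pick two children and, using the induction once more, write them as $\mu(\xi_1,\dots,\xi_p)$ and $\mu(\xi_{p+1},\dots,\xi_{2p})$ with $\xi_\ell\in A_{jp^{i-2}}$; then the root value is $\mu(\mu(\xi_1,\dots,\xi_p),\mu(\xi_{p+1},\dots,\xi_{2p}),G_3,\dots,G_p)$ with the other children $G_3,\dots,G_p$ as the elements $y_3,\dots,y_p$ of degree $jp^{i-1}$, and condition (2) (applied in degree $jp^{i-2}$, with its sign when $j$ is odd) says exactly that the $2p$ sub-values $\xi$ may be redistributed arbitrarily between these two siblings. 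The delicate point — which I expect to be the main obstacle in a full write-up — is the combinatorial bookkeeping that these pairwise redistributions, together with the symmetry of $\mu$, connect \emph{any} two partitions of the factor multiset into $p$ equal-degree combinable groups (and propagate through the levels), so that $\tau$ is genuinely independent of the bracketing. Granting that connectivity, the induction closes, $\tau$ and hence $r$ are well defined, the unit is injective, and $C$ is $p$-polar.
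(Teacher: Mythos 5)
Your overall strategy is the same as the paper's: both directions reduce via Remark~\ref{remark:ptypicalsplitting} to the $p$-typical case $A=A_{(j)}$, and there the problem becomes showing that a monomial of internal degree $jp^i$ in $\Sym(A_{(j)})$ has a well-defined fully multiplied value in $A_{jp^i}$, independent of the chosen bracketing; injectivity of the unit $A \to \pol(\hull(A))$ then follows. Your retraction $r$ is in effect the inverse of the paper's ``forget the bracketing'' map $f$ from the free object modulo the relation \eqref{lemma:nonperiodicchar:assoc} to $\Sym(A)_{(j)}$. The necessity direction, the splitting into $p$-typical pieces, the base-$p$ carrying argument showing every monomial of degree $jp^i$ is combinable, and the check that $r$ descends to $\hull(C)$ once $\tau$ is well defined are all correct.

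The gap is exactly where you flag it, and it is not a formality: the ``connectivity'' you grant yourself is the entire content of the sufficiency direction, and everything you do prove is bookkeeping around it. Moreover, your proposed induction (``connect any two partitions of the factors into $p$ equal-degree groups, then recurse'') is not set up in a way that closes: the available move from axiom \eqref{lemma:nonperiodicchar:assoc} only exchanges whole grandchild-blocks between two siblings, whereas passing between two arbitrary partitions may require migrating a single low-degree factor across many levels of the tree, dragging unwanted companions along; ``propagate through the levels'' is precisely what needs an argument. The paper proves a sharper inductive statement instead: if $y_1$ and $y_2$ are two subtrees occurring at a common depth $d$ in a monomial $x$, then transposing them gives an equivalent monomial. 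For $d=1$ this is the $\Sigma_p$-symmetry of $\mu$; for $d>1$, if $y_1,y_2$ lie in distinct children, one application of \eqref{lemma:nonperiodicchar:assoc} moves the grandchild containing $y_2$ into the child containing $y_1$, the inductive hypothesis performs the swap one level down inside that child, and a second application of \eqref{lemma:nonperiodicchar:assoc} moves the displaced grandchild back. Since in the $p$-typical setting the depth of a leaf is determined by its degree, any two bracketings with the same leaf multiset differ by such depth-preserving transpositions, and well-definedness of $\tau$ follows. Until you supply this (or an equivalent confluence argument for your rewriting system), the sufficiency direction is not established.
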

\begin{proof}
The implication $A$ $p$-polar $\Rightarrow$ \eqref{lemma:nonperiodicchar:zero}, \eqref{lemma:nonperiodicchar:assoc} is straightforward. For the converse, we may assume by Remark~\ref{remark:ptypicalsplitting} without loss of generality that either $A=A_0$ or $A = A_{(j)}$ is $p$-typical. Lemma~\ref{lemma:olddef} takes care of the first case, so assume $A$ is $p$-typical and \eqref{lemma:nonperiodicchar:assoc} holds. We will show:

\medskip
\noindent\textbf{Claim:} $\Sym(M)_{(j)}$ is the free object on a $p$-typical $k$-module $M=M_{(j)}$ in the full subcategory of $M_k$ of objects satisfying \eqref{lemma:nonperiodicchar:assoc}.

\medskip
The result follows from the claim because of the commutative diagram
\[
\begin{tikzcd}
\Sym(A)_{(j)} \ar[r] \ar[d] & \Sym(A) \ar[d]\\
A \ar[r] & \hull(A)
\end{tikzcd}
\]
where the horizontal arrows are injections and the left-hand vertical arrow is given by dividing out by the $p$-polar ideal $(x_1\cdots x_p-\mu(x_1,\dots,x_p)) \cap \Sym(A)_{(j)}$.

\medskip

To prove the claim, let $M = M_{(j)}$ be a graded $p$-typical $k$-module. The free object $T_M$ in $M_k$ on $M$ is given recursively by
\[
(T_M)_n = M_n \oplus \Sym^p((T_M)_{\frac n p}),
\]
where $(T_M)_n = 0$ if $n \not\in \Z$. We call an element of $(T_M)_n$ a monomial if it is either an element of $M_n$ or a monomial $\{x_1,\dots,x_p\} \in \Sym^p$ on monomial elements in $x_i \in (T_M)_{\frac n p}$, using curly braces for equivalence classes of tensors in $\Sym^p$. Clearly, by linearity, any element of $T_M$ is a linear combination of monomials. One could describe these monomial elements as some kinds of labelled trees. While this is a good picture to have in mind, I will not use that language.

Define recursively an equivalence relation $\sim$ on monomials in $T_M$ (and hence, by linear extension, on all of $T_M$) generated by 
\begin{multline*}
\{\{x_1,\dots,x_p\},\{x_{p+1},\cdots,x_{2p}\},y_3,\cdots, y_p\}\\
 \sim (-1)^{|x_1|} \{\{x'_1,\dots,x'_{p-1},x'_{p+1}\},\{x'_{p},x'_{p+2},\cdots,x'_{2p}\},y'_3,\cdots, y'_p\}
\end{multline*}
iff $y_j \sim y'_j$ for $3 \leq h \leq p$, $x_i \sim x'_i$ for $1 \leq i \leq 2p$.
Then $T_M/\sim$ is the free object in $M_k$ satisfying \eqref{lemma:nonperiodicchar:assoc}. (The $\Sigma_{2p}$-invariance is equivalent to the invariance under interchanging $x_p$ and $x_{p+1}$, given the guaranteed $\Sigma_p \times \Sigma_p$-invariance.)

There is a linear map $f\colon T_M \to \Sym(M)_{(j)}$ given on monomials by $f(m) = m$ for $m \in M$ and $f(\{x_1,\dots,x_p\}) = f(x_1)\cdots f(x_p)$ for $\{x_1,\dots, x_p\} \in \Sym^p(T_M)$. We obtain an induced map $\overline f\colon T_M/\sim \to \Sym(M)_{(j)}$ which we claim to be an isomorphism.

To show surjectivity of $\overline f$, let $X=x_1,\dots,x_n \in \Sym(M)_{jp^N}$. If $n=1$, $x_1 = f(x_1)$ and we are done. Otherwise, because $M$ is $p$-typical, there has to be a partition of $\{1,\dots,n\}$ into $p$ parts $I_1,\dots,I_p$ such that for $X_i = \prod \{x_j \mid j \in I_i\}$, $|X_i| = jp^{N-1}$. Inductively, all $X_i$ are in the image of $f$, hence so is $X=f(\{X_1,\dots,X_p\})$.

We proceed to show injectivity of $\overline f$.

Let $x \in T_M$ be a monomial. We say that $y \in T_M$ occurs at depth $d$ in $x$ if either $d=0$ and $y=x$ or $x=\{x_1,\dots,x_p\}$ and $y$ occurs at depth $d-1$ in $x_i$ for some $i$.

Now suppose that $y_1$ and $y_2$ occur at a common depth $d \geq 1$ in $x=\{x_1,\dots,x_p\}$, and let $x' \in T_M$ be the element obtained by interchanging $y_1$ and $y_2$. Then I claim that $x \sim \pm x'$. To see this, we proceed by induction. If $d=1$ then the claim is true by symmetry. Suppose that $d>1$. Then $y_1$ occurs at depth $d-1$ in some $x_i$ and $y_2$ occurs at depth $d-1$ in some $x_j$. If $i=j$, we are done by induction. Otherwise, suppose without loss of generality that $i=1$ and $j=2$. Let $x_1=\{x_{11},\dots,x_{1p}\}$ and $x_2=\{x_{21},\dots,x_{2p}\}$. Without loss of generality, suppose that $y_i$ occurs at depth $d-2$ in $x_{i1}$ for $i=1,2$. Let $x^{(1)}$ be obtained from $x$ by interchanging $x_{12}$ and $x_{21}$:
\begin{align*}
x^{(1)} &=\{x^{(1)}_1,\dots,x^{(1)}_p\}\\
& = \{\{x_{11},x_{21},x_{13},\dots,x_{1p}\},\{x_{12},x_{22},\dots,x_{2p}\},x_3,\dots,x_p\}.
\end{align*}
Then $x \sim \pm x^{(1)}$ and now, $y_1$ and $y_2$ occur at depth $d-1$ in the same component $x^{(1)}_1$. By induction, $x^{(1)}_1 \sim (x^{(1)})_1' = \{x_{11}',x_{21}',x_{13},\dots,x_{1p}\}$, the element obtained from $x^{(1)}_1$ by interchanging $y_1$ and $y_2$. But then
\begin{multline*}
x \sim \pm \{\{x_{11}',x_{21}',x_{13},\dots,x_{1p}\},\{x_{12},x_{22},\dots,x_{2p}\},x_3,\dots,x_p\} \\
\sim \pm \{\{x_{11}',x_{12},\dots,x_{1p}\},\{x_{21}',x_{22},\dots,x_{2p}\},x_3,\dots,x_p\} = \pm x'.
\end{multline*}
We conclude that if $x$, $x' \in T_M$ with $f(x)=f(x')$ (i.e. they contain the same leaf elements at any given level), then $x \sim x'$. This proves the claim.
\end{proof}

\begin{corollary}\label{cor:uniqueproduct}
Let $A=A_{(j)}$ be a $p$-typical polar $k$-algebra and $x_1,\dots,x_N \in A$. Then there is a unique product $x_1\dots x_N$ in $A$ if $|x_1|+\cdots+|x_N| = jp^k$ for some $k$, and no possible way to multiply these elements in $A$ otherwise.\qed
\end{corollary}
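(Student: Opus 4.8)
The plan is to translate the statement into the tree language set up in the proof of Lemma~\ref{lemma:nonperiodicchar}. A ``way of multiplying'' $x_1,\dots,x_N$ is precisely a monomial (a $p$-ary tree) $t$ in the free object $T_A$ whose leaf multiset is $\{x_1,\dots,x_N\}$, and ``the product'' is its image $\epsilon(t)\in A$ under the canonical structure map $\epsilon\colon T_A \to A$ that evaluates $\mu$ at every internal node. With this dictionary in place, all three assertions follow from the surjectivity and injectivity analysis already carried out for that lemma.

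First I would dispose of impossibility. Since $A=A_{(j)}$ is concentrated in degrees $jp^i$ and $\deg\mu(a_1,\dots,a_p)=p\deg a_1 = \sum_i \deg a_i$, an induction along any multiplication tree shows that every internal node, and in particular the root, carries a degree of the form $jp^i$, while the root's degree equals $|x_1|+\cdots+|x_N|$. Hence if this total is not of the form $jp^k$, no valid tree can be assembled at all, which is exactly the claim that there is no way to multiply. For existence when $|x_1|+\cdots+|x_N|=jp^k$, I would invoke the surjectivity half of Lemma~\ref{lemma:nonperiodicchar}: writing $|x_\ell|=jp^{i_\ell}$, the hypothesis reads $\sum_\ell p^{i_\ell}=p^k$, and the partition argument there (the smallest power present occurs a multiple-of-$p$ number of times, so it may be grouped into $p$-fold products) produces an actual tree with leaves $x_1,\dots,x_N$.

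Uniqueness is the heart of the matter. The structure map $\epsilon$ factors through $T_A/\sim$ because $A$ satisfies condition~\eqref{lemma:nonperiodicchar:assoc}: the single generating move of $\sim$ is the interchange of $x_p$ and $x_{p+1}$, and applying $\mu$ to both sides and using~\eqref{lemma:nonperiodicchar:assoc} yields equality up to the Koszul sign $(-1)^{|x_1|}$. Under the isomorphism $T_A/\sim\;\cong \Sym(A)_{(j)}$ of the lemma, a tree $t$ is sent to $f(t)=\pm\,x_1\cdots x_N$, the honest graded-commutative monomial in $\Sym(A)$, which depends only on the leaf multiset up to sign. Composing with the quotient $\Sym(A)_{(j)}\twoheadrightarrow F_A\cong A$, the value $\epsilon(t)$ is seen to be the image of $\pm\,x_1\cdots x_N$ in $A$, manifestly independent of the chosen tree. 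This is the unique product.

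The step I expect to carry the real weight is the assertion, used in uniqueness, that any two multiplication trees on the same leaves are connected by the generating move of $\sim$. This is not new work, however: it is exactly the ``common depth'' interchange argument in the injectivity half of Lemma~\ref{lemma:nonperiodicchar}, which establishes $f(x)=f(x')\Rightarrow x\sim x'$. Thus the only genuinely local verification that remains is the bookkeeping of the signs $(-1)^{|x_1|}$ accumulated along such a chain of interchanges, which is routine once one fixes, as in the lemma, the convention that the product is formed in the given order $x_1,\dots,x_N$; this also accounts for the fact that the product is canonical only up to sign.
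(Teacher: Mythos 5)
Your proposal is correct and is essentially the argument the paper intends: the corollary is stated with an immediate \qed because it is exactly the content of the surjectivity (degree/partition) and injectivity (common-depth interchange, $f(x)=f(x')\Rightarrow x\sim x'$) analysis in the proof of Lemma~\ref{lemma:nonperiodicchar}, which you have unpacked faithfully, including the descent of the evaluation map through $T_A/\sim$ via condition~\eqref{lemma:nonperiodicchar:assoc} and the sign caveat.
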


\begin{corollary}\label{cor:evaluationofpolarpolynomials}
Let $P \in k[x_1,\dots,x_k]$ be a polynomial of degree $jp^n$ in generators $x_i$ of degree $jp^{n_i}$ for some nonnegative integers $j,n,n_1,\dots,n_k$, and let $A$ be a $p$-polar algebra. Then there is a unique and natural evaluation map
\[
\ev_P\colon A_{jp^{n_1}} \times \cdots \times A_{jp^{n_k}} \to A_{jp^n}
\]
factoring the evaluation map
\[
\hull(A)_{jp^{n_1}} \times \cdots \times \hull(A)_{jp^{n_k}} \to \hull(A)_{jp^n}; \quad (a_1,\dots,a_k) \mapsto P(a_1,\dots,a_k).
\]

\end{corollary}
\begin{proof}
We need to show that for every such $P$ and $a_1,\dots,a_k \in A$, the element $P(a_1,\dots,a_k) \in \hull(A)_{jp^n}$ lies in the subgroup $A_{jp^n}$; uniqueness is automatic because $A \to \hull(A)$ is injective by Prop.~\ref{prop:hullembedding}. Without loss of generality, we may assume that $p$ is a monic monomial.

The claim is true for $p$-typical polar $k$-algebras of type $j$ by Cor.~\ref{cor:uniqueproduct}. For an arbitrary $p$-polar $k$-algebra, note that $(a_1,\dots,a_k) \in A_{(j)}$ by assumption, thus $\ev_P(a_1,\dots,a_k) \in (A_{(j)})_{jp^n} = A_{jp^n}$.
\end{proof}

% The distinction between strong and usual (weak) graded $p$-polar algebras is elucidated by the following adjoint:

% \begin{prop}\label{prop:strongclosure}
% The inclusion $I$ of strong graded $p$-polar $k$-algebras into all graded $p$-polar algebras has a left adjoint $\st$, and for $p$-typical $p$-polar algebras of type $j$, there is a natural isomorphism
% \[
% A \to \st(A) \twoheadrightarrow \st(A)_{(j)}.
% \]
% \end{prop}
% \begin{proof}
% The left adjoint is explicitly given by
% \[
% \st(A) = \Bigl(\bigoplus_{i=0}^\infty \Sym^{1+(p-1)i}A\Bigr) / J, 
% \]
% where $J$ is the $p$-polar ideal generated by $(\mu(x_1,\dots,x_p) - x_1 \otimes \cdots \otimes x_p)$ for all $x_1,\dots,x_p$ of matching degrees.

% Define $i\colon A \to \st(A)$ by the inclusion into the $i=0$-factor of the direct sum. and suppose that $A$ is $p$-typical of degree $j$. Then clearly, $i(A) \in \st(A)_{(j)}$. Let $x= x_1 \otimes \cdots \otimes x_{1+(p-1)i} \in \Sym^{1+(p-1)i}$ be a monomial of degree $jp^n$. Then by Cor.~\ref{cor:uniqueproduct}, $x-i(z) \in J$ for a unique $z \in A$, proving that $i$ induces the desired isomorphism.
% \end{proof}

\section{Splittings of graded affine and formal groups}\label{sec:splittings}

The aim of this section is to reduce the main theorems to the case of unipotent affine groups and connected formal groups, respectively. This is achieved by studying natural splittings of
the categories of graded affine and formal groups.

In addition, we show that free affine resp. formal groups on graded $k$-algebras exist (Lemma~\ref{lemma:existenceoffree}, which is the first part of Thm.~\ref{thm:freeschemefactorization}.)

Let $k$ be a perfect field of characteristic~$p$.

\begin{prop}\label{prop:groupsplitting}
Suppose $\operatorname{char}(k)=p>2$. Then there are equivalences of categories
\begin{eqnarray*}
\AbSch{k} \simeq \AbSch{k}^{\odd} \times \AbSch{k}^u \times \AbSch{k}^m, \\
\AbSch{k}^p \simeq \AbSch{k}^{\odd} \times \AbSch{k}^u \times \AbSch{k}^{m,p},
\end{eqnarray*}
where 
\begin{itemize}
\item $\AbSch{k}^{\odd}$ denotes groups represented by Hopf algebras isomorphic to primitively generated exterior algebras on elements of odd degree,
\item $\AbSch{k}^u$ denotes evenly graded, unipotent groups, and
\item $\AbSch{k}^m$ denotes ungraded groups of multiplicative type and $\AbSch{k}^{m,p}$ the full subcategory of $p$-adic groups.
\end{itemize}
Dually, there is an equivalence of categories
\[
\FGps{k} \simeq \FGps{k}^{\odd} \times \FGps{k}^c \times \FGps{k}^e,
\]
where
\begin{itemize}
\item $\FGps{k}^{\odd}$ denotes groups represented by formal Hopf algebras isomorphic to primitively generated exterior pro-algebras on elements of odd degree,
\item $\FGps{k}^c$ denotes evenly graded, connected formal groups, and
\item $\FGps{k}^e$ denotes ungraded, \'etale formal groups.
\end{itemize}
\end{prop}

\begin{proof}
Note that the formal and the affine parts of the statement are Cartier dual to one another, thus it suffices to prove the claim in the affine case. Let $G \in \AbSch{k}$ be an affine group scheme with representing graded commutative Hopf algebra $H=\Reg{G}$.

By \cite[Prop.~A.4]{bousfield:p-adic-lambda-rings}, any bicommutative Hopf algebra (and dually, every bicommutative formal Hopf algebra) over a field of characteristic $p>2$ splits naturally into an even part and an odd part:
\[
H = H_{\ev} \otimes H_{\odd},
\]
where $H_{\ev}$ is concentrated in even degrees and $H_{\odd}$ is an exterior algebra on primitive generators in odd degrees. Thus we obtain a splitting
\[
\AbSch{k} \simeq \AbSch{k}^{\odd} \times \AbSch{k}^{\ev}
\]
Considering $G^{\ev}$, the even part of $G$, as an ungraded group scheme, it is shown in \cite[\textsection I.7]{fontaine:groupes-divisibles} that there is a natural splitting
\[
G^{\ev} \simeq G^u \times G^m
\]
into a unipotent group and a group of multiplicative type. The grading is compatible with this splitting, and the proof is complete by observing that a graded group of multiplicative type, by definition, has to be concentrated in degree $0$.

For the $p$-adic version of the statement, note that odd groups and unipotent groups are necessarily $p$-adic. 
\end{proof}

\begin{remark}
There is actually a natural splitting $G \cong G^m_0 \times G^u_0 \times G^{\ev}_{\neq 0} \times G^{\odd}$, where the three last factors are unipotent and the first two are concentrated in degree $0$.
\end{remark}

Since the free functors whose existence is claimed in Thm.~\ref{thm:freeschemefactorization} must necessarily split into components $\Fr^{\odd}$, $\Fr^u$ etc. accordingly to the splitting of $\AbSch{k}^p$ and $\FGps{k}$, the proof of that theorem is reduced to two times three cases. 

\begin{lemma}\label{lemma:multiplicativeetalefactorization}
Theorem~\ref{thm:freeschemefactorization} holds for affine schemes of multiplicative type and \'etale formal schemes.
\end{lemma}
\begin{proof}
This was done in \cite[Proof of Lemma 1.2]{bauer:p-polar} as these are ungraded.
\end{proof}

\begin{prop}\label{prop:oddfree}
Theorem~\ref{thm:freeschemefactorization} holds for odd affine and odd formal groups. More precisely, let $k$ be a field of characteristic $p>2$. Then the forgetful functors $\AbSch{k}^{\odd} \to \Alg_k^{\op}$ and $\FGps{k}^{\odd} \to (\Pro-\alg_k)^{\op}$ have left adjoints $\Fr^{\odd}$, and these factor as
\[
\begin{tikzcd}
\Alg_k^\op \arrow[r,"\Fr^{\odd}"] \ar[dr,swap,"(-)^{\odd}"] & \AbSch{k}^{\odd}\\
& (\Mod_k^{\odd})^\op \ar[u,"\hat \Fr"],\\
\end{tikzcd}
\quad \text{and} \quad
\begin{tikzcd}
\alg_k^\op \arrow[r,"\Fr^{\odd}"] \ar[dr,swap,"(-)^{\odd}"] & \FGps{k}^{\odd}\\
& (\operatorname{mod}_k^{\odd})^\op \ar[u,"\hat \Fr"],\\
\end{tikzcd}
\]
where the diagonal map assigns to $A$ the odd part of the underlying $k$-module $A$ and $\Reg{\tilde \Fr(M)} = \bigwedge(M)$.
\end{prop}

\begin{proof}
The affine case is basically a reformulation of Bousfield's result \cite[Prop.~A.4]{bousfield:p-adic-lambda-rings}, which says that the functor of primitives gives an equivalence between odd affine groups and oddly graded $k$-modules, inverse to the exterior algebra functor. To see that $\Fr^{\odd}(A)$, defined as represented by $\bigwedge(A_{\odd})$, indeed is adjoint to the forgetful functor, we calculate for any odd Hopf algebras $H = \bigwedge(M)$ with $M$ some oddly graded $k$-vector space:
\begin{multline*}
\Hom_{\Alg_k}(H,A) = \Hom_{\Alg_k}(\bigwedge(M),A)\\
 = \Hom_{\Mod_k^{\odd}}(M,A^{\odd}) = \Hom_{\Hopf{k}^{\odd}}(H,\bigwedge(A)).
\end{multline*}
The formal case follows by dualization, noting that $\bigwedge(M^*) \cong (\bigwedge(M))^*$ as formal Hopf algebras.
\end{proof}

It thus remains to show Thm.~\ref{thm:freeschemefactorization} for unipotent affine resp. connected formal groups, which in addition are even if $p>2$. The existence of the adjoints is easily established by the following lemma, but for the harder factorization result, we will need additional machinery developed in the following two sections.

\begin{lemma}\label{lemma:existenceoffree}
The forgetful functors $\AbSch{k}^u \to \Alg_k^{\op}$ and $\FGps{k}^c \to (\Pro-\alg_k)^\op$ have left adjoints.
\end{lemma}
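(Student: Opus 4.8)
The plan is to reduce the existence of each left adjoint to a parametrized corepresentability statement and then to verify that corepresentability via the General Adjoint Functor Theorem. Recall that a functor $U$ admits a left adjoint precisely when, for every object $c$ of its source, the functor $d \mapsto \Hom(c,U(d))$ is corepresentable. For $U\colon \AbSch{k}^p \to \Alg_k^{\op}$ and $c = R \in \Alg_k^{\op}$ this functor is
\[
\phi_R\colon \AbSch{k}^p \to \Ab, \qquad \phi_R(G) = \Hom_{\Alg_k}(\Reg{G},R) = G(R),
\]
so the task becomes: corepresent $\phi_R$ by an object $\Fr(R) \in \AbSch{k}^p$ for each $R$. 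Naturality in $R$ (a map $R \to R'$ induces $\phi_R \Rightarrow \phi_{R'}$ because $G$ is a functor on $\Alg_k$) then assembles the $\Fr(R)$ into the desired left adjoint by uniqueness of representing objects. The same reduction applies to $\FGps{k} \to (\Pro-\alg_k)^{\op}$, where the relevant functor is evaluation of a formal group on a profinite algebra.

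Next I would check the standing hypotheses of the adjoint functor theorem. The category $\AbSch{k}^p$ is complete: products of abelian affine group schemes correspond to tensor products of their bicommutative Hopf algebras and equalizers to Hopf-algebra quotients, and the pro-$p$ condition survives both operations (a product of pro-$p$-groups is pro-$p$, and a closed subgroup of a pro-$p$-group is pro-$p$), so the inclusion $\AbSch{k}^p \hookrightarrow \AbSch{k}$ creates these limits. The functor $\phi_R$ preserves all limits, since products and equalizers of group schemes are computed pointwise on $R$-valued points, i.e.\ $(\lim_i G_i)(R) = \lim_i G_i(R)$. The same two observations hold for $\FGps{k}$ with evaluation on $\alg_k$; alternatively one may transport completeness and limit-preservation across the Cartier duality $\FGps{k} \simeq (\AbSch{k})^{\op}$.

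The one step that requires genuine work, and which I expect to be the main obstacle, is the solution-set condition. Given $R$, I would exhibit a \emph{set} $S_R$ of objects of $\AbSch{k}^p$ through which every element of every $\phi_R(G)$ factors. Concretely, a point $x \in G(R)$ is a morphism $\Spec R \to G$; its scheme-theoretic image is a closed subscheme, whose coordinate ring is a quotient of $\Reg{G}$ mapping injectively into the subalgebra of $R$ generated by the image, hence of cardinality at most $\kappa_R := \max(|R|,|k|,\aleph_0)$. The closed subgroup scheme $G_x \hookrightarrow G$ generated by this subscheme is then also of cardinality $\leq \kappa_R$, and $x$ factors through $G_x(R)$. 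Since there is only a set of isomorphism classes of bicommutative pro-$p$ Hopf algebras of cardinality $\leq \kappa_R$, these $G_x$ form a set $S_R$, which verifies the solution-set condition and hence, with the previous paragraph, the corepresentability of $\phi_R$. An identical estimate applied to finite-dimensional quotients handles the formal case in $\FGps{k}$.

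The delicate points are therefore the construction of the image subgroup scheme in the graded pro-$p$ setting and the cardinality bound guaranteeing that $S_R$ is small; everything else is formal. I note that one could instead build $\Fr(R)$ by hand, as a group completion of a symmetric-power construction on $\Spec R$ within an appropriate ind-/pro-category, but the adjoint-functor-theorem route above gives the cleanest existence proof and isolates the single nontrivial estimate.
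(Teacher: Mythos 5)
Your argument is essentially correct but takes a genuinely different route from the paper. The paper first splits $\AbSch{k}^p$ and $\FGps{k}$ into odd, unipotent (resp.\ connected), and multiplicative (resp.\ \'etale) components via Prop.~\ref{prop:groupsplitting}, and then exhibits each component of the adjoint \emph{explicitly}: the exterior Hopf algebra for the odd part (Prop.~\ref{prop:oddfree}), the ungraded construction from the prequel for the multiplicative/\'etale part, and Takeuchi's cofree cocommutative conilpotent Hopf algebra $\Cof^u(A)=\bigoplus_{n\geq 0}(A^{\otimes_k n})^{\Sigma_n}$ for the unipotent part, dualized for the connected formal part. You instead run the General Adjoint Functor Theorem: the reduction to corepresentability of $G\mapsto G(R)$, the completeness of $\AbSch{k}^p$, and the pointwise computation of limits are all fine, and taking the closed subgroup scheme generated by the scheme-theoretic image is the standard way to produce a solution set. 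Two details deserve more care than you give them: first, your cardinality bound is optimistic --- $G_x$ is the scheme-theoretic image of $\coprod_n (Z\cup Z^{-1})^n\to G$, so $\Reg{G_x}$ embeds into a countable product of algebras of size at most $\kappa_R$, giving a bound like $\kappa_R^{\aleph_0}$ rather than $\kappa_R$ (harmless, since any cardinal bound suffices, but the claim as stated is not justified); second, you must check that $\AbSch{k}^p$ is closed under limits and under passage to closed subgroup schemes inside $\AbSch{k}$, so that the $G_x$ actually live in the right category, and the formal case (completeness of $\FGps{k}$ and pointwise limits there) is dismissed in one line when it is not literally ``identical.'' The real trade-off is that the abstract route loses the explicit description $\Reg{\Fr^u(A)}=\Cof^u(A)$, which the paper relies on later (Lemma~\ref{lemma:cofreeiscofreeascoalg} and the proof of Thm.~\ref{thm:lambdapcofree}); so while your proof establishes the lemma, the paper's constructive proof is doing additional work that the rest of the argument needs.
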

\begin{proof}
The Hopf algebra representing $\Fr^u(A)$ is 
\[
\Cof^u(A) = \bigoplus_{n \geq 0} (A^{\otimes_k n})^{\Sigma_n},
\]
the cofree cocommutative conilpotent Hopf algebra first constructed by Takeuchi \cite{takeuchi:tangent-coalgebras}. His (ungraded) construction is compatible with the grading.

Dually, the Hopf algebra dual to the formal Hopf algebra representing $\Fr^c(A)$ is the unipotent part of the free commutative Hopf algebra on a cocommutative coalgebra \cite{takeuchi:free-hopf}.
\end{proof}

\section{Graded Witt vectors}\label{sec:witt}

In this section, we will consider commutative graded rings $A$ instead of graded-commutative rings, i.e. graded rings $A$ whose underlying ungraded ring is commutative. Similarly, a graded $p$-polar ring or algebra in this section is the analog where $\mu\colon A_j^{\otimes_k p} \to A_{jp}$ is strictly $\Sigma_p$-invariant, not just up to a sign. We will apply the results of this section to evenly graded, graded-commutative rings or graded, commutative rings over fields of characteristic $2$. There does not seem to be an adequate (for our purposes) definition of graded-commutative Witt vectors for graded-commutative rings, nor will it be necessary, in light of Prop.~\ref{prop:oddfree}

Throughout, let $p$ be a fixed prime. For a graded abelian group $M$ and an integer $i \geq 0$, we write $M(i)$ for the graded abelian group with $M(i)_n = M_{p^in}$. 

We assume the reader is familiar with the ungraded theory of $p$-typical Witt vectors, cf.~\cite{witt:witt-vectors,hazewinkel:witt,hesselholt:witt-survey}. It is tempting to define the Witt vectors $W(R)$ of a graded ring as the Witt vectors of the underlying ungraded ring and then equip it with a suitable grading, but a moment's thought shows that this doesn't quite work. The correct notion of $W_n(R)$ for graded rings is given in \cite{schoeller:Hopf,goerss:hopf-rings}. Instead of reviewing it here, we directly provide a workable definition for $p$-polar rings:

\begin{defn}
Let $A$ be a $p$-polar ring. As a graded set, the $p$-typical Witt vectors of $A$ of length $0 \leq n \leq \infty$ are defined as
\[
W_n(A) =\prod_{i=0}^n A(i), \quad \text{i.e.} \quad (W_n(A))_j = \prod_{i=0}^n A_{jp^i}.
\]
Define the ghost map $w\colon W_n(A) \to \prod_{i=0}^n A(i)$ by
\[
w(a_0,a_1,\dots) = (a_0,a_0^p+pa_1,a_0^{p^2}+pa_1^p+p^2a_2,\dots).
\]
\end{defn}

This is the same ghost map as in the definition of Witt vectors of (graded or ungraded) rings \cite{goerss:hopf-rings}. It happens to consist of $p$-polar polynomials.

\begin{lemma}\label{lemma:Wittstructurepolynomials}
Let $S_n(\mathbf x,\mathbf y) \in \Z[x_0,x_1,\dots,y_0,y_1,\dots,]$ be the polynomials in $x_0,\dots,x_n$, $y_0,\dots,y_n$ defining the addition of $p$-typical Witt vectors of rings, i.e.~such that for $\mathbf a,\mathbf b \in W(A)$ for any ring $A$,
\[
\mathbf a +_{W(A)} \mathbf b = (S_0(\mathbf a,\mathbf b),S_1(\mathbf a,\mathbf b),\dots).
\]
Then $S_n(\mathbf x,\mathbf y)$ is homogeneous of degree $jp^n$
when the variables are given the degrees $|x_i|=|y_i|=jp^i$.

If $P_n(\mathbf x,\mathbf y)$ is the corresponding polynomial for $p$-typical Witt vector multiplication, then $P_n$ is homogeneous of degree $2jp^n$ under the same hypotheses.
\end{lemma}

\begin{proof}
Since $w_n(\mathbf x,\mathbf y)$ is homogeneous of degree $jp^n$ if and only if the variables have degrees $|x_i|=|y_i|=jp^i$ (where $0 \leq i \leq n$), the claim follows from the defining properties
\[
w_n(\mathbf a +_{W(A)} \mathbf b) = w_n(\mathbf a)+w_n(\mathbf b)
\]
and
\[
w_n(\mathbf a \cdot_{W(A)} \mathbf b) = w_n(\mathbf a) w_n(\mathbf b).\qedhere
\]
\end{proof}

\begin{lemma}\label{lemma:Wittofppolar}
Let $A$ be a graded $p$-polar ring and $1 \leq n \leq \infty$.
\begin{enumerate}
\item There is a unique functorial graded $p$-polar ring structure on $W_n(A)$ making the ghost map $w$ into a $p$-polar ring map. If $A=\pol_p(B)$ for some graded ring $B$ and $W_n(B)$ denotes the graded Witt vectors of \cite{goerss:hopf-rings}, then there is a natural isomorphism $\pol_p(W_n(B)) \cong W_n(\pol_p(B))$.
\label{lemma:Wittofppolar:ppolarstructure}
\item \label{lemma:Wittofppolar:FV} There are unique natural additive maps $F\colon W_{n+1}(A) \to W_n(A)(1)$ (Frobenius) and $V\colon W_{n}(A)(1) \to W_{n+1}(A)$ (Verschiebung) such that the following diagram commutes ($x_{-1}=0$ by convention):
\[
\begin{tikzcd}[column sep={10em,between origins}]
W_{n}(A)(1) \arrow[d,"w"] \arrow[r,"V"] & W_{n+1}(A) \arrow[d,"w"] \arrow[r,"F"] & W_{n}(A)(1) \arrow[d,"w"]\\
\prod_{i=0}^{n-1} A(i+1) \arrow[r,"(x_i)_i \mapsto (px_{i-1})"] &
\prod_{i=0}^{n} A(i) \arrow[r,"(x_i)_i \mapsto (x_{i+1})_i"] & \prod_{i=0}^{n-1} A(i+1)
\end{tikzcd}
\]
Explicitly, $V(a_0,\dots,a_{n-1}) = (0,a_0,\dots,a_{n-1})$ and $F$ is uniquely determined by $F(\underline a) = \underline{a^p}$ and $FV=p$, where $\underline a = (a,0,\dots,0)$ denotes the Teichm\"uller representative of $a \in A$ in $W_n(A)$.

If $A=\pol_p(B)$ then $F$ and $V$ coincide with the operators of the same name defined on $W_n(B)$ \cite{goerss:hopf-rings}.

\item \label{lemma:Wittofppolar:algebra} If $A$ is a $p$-polar algebra over a perfect field $k$ of characteristic $p$ then $W_n(A)$ is a $p$-polar $W(k)$-algebra. For a graded $W(k)$-module $M$, give $M(i)$ the $W(k)$-module structure defined by
\[
\lambda . m = \frob^n(\lambda)m,
\]
where $\frob$ is the Frobenius on $W(k)$. Then $F$ and $V$ are $W(k)$-linear.
\end{enumerate}
\end{lemma}

\begin{proof}
This was proved in the ungraded case in \cite[Lemma~3.3]{bauer:p-polar}, so assume $j>0$.

For $\mathbf a,\;\mathbf b \in W_n(A)_{jp^i}$, we have
\[
\mathbf a +_{W_n(A)} \mathbf b = (S_0(\mathbf a,\mathbf b),\dots,S_n(\mathbf a,\mathbf b)).
\]
Note that this is well-defined by Lemma~\ref{lemma:Wittstructurepolynomials} and Cor.~\ref{cor:evaluationofpolarpolynomials}. 
Similarily, if $\mathbf a^{(1)},\dots,\mathbf a^{(p)} \in W_n(A)_{jp^i}$, since the polynomial $P_n(\mathbf x^{(1)},\dots,\mathbf x^{(p)})$ for $p$-fold multiplication has degree $np^{i+1}$, evaluation provides a well-defined $p$-polar multiplication on $W_n(A)$. 

Since $w$ is injective in the case where $A$ is torsion-free, this $p$-polar ring structure is unique and is compatible with the ring structure on $W_n(B)$ when $B$ is a graded ring.

This proves \eqref{lemma:Wittofppolar:ppolarstructure}. Parts \eqref{lemma:Wittofppolar:FV} and \eqref{lemma:Wittofppolar:algebra} follow easily.
\end{proof}

If $A$ is a strong $p$-polar graded algebra and $A^{\ungr}$ denotes $A$ as an ungraded $p$-polar algebra then $W(A^{\ungr})$ and $W(A)^{\ungr}$ are in general distinct:
\begin{example}
	$W(k[u]) \cong W(k)[u]$ if $|u| =d> 0$. This is false if $d=0$: the (ungraded) $p$-typical Witt vectors of $W(\F_p[x])$ are more complicated (cf. \cite[Exercise~10]{borger:witt-vector-lectures}). The $p$-polar algebra $W(A^{\ungr})$ can be given a natural $\Z[\frac1p]$-grading, and $W(A)$ can be identified with the sub-$p$-polar algebra concentrated in integer degrees.
\end{example}

The functor $M \mapsto M(1)$ is not an equivalence. But if $k$ is a perfect field (and hence $\frob$ is invertible), it has a right inverse $(-1)$ given by
\[
M(-1)_n = \begin{cases} 0;& p \nmid n\\
M_{\frac n p}; & p \mid n\end{cases}
\]
with the $W(k)$-linear structure given by $\alpha.m = \frob^{-1}(\alpha)m$. Confusingly, $(-1)$ being a right inverse means that $M(-1)(1)\cong M$.

\begin{defn}
For a graded $p$-polar ring $A$, the group of unipotent co-Witt vectors is defined by
\[
CW^u(A) = \colim(W_0(A) \xrightarrow{V} W_1(A(-1)) \xrightarrow{V} W_2(A(-2)) \xrightarrow{V} \cdots),
\]
where $V\colon W_n(A) = W_n(A(-1))(1) \to W_{n+1}(A(-1))$ is induced by the Verschiebung $V\colon W_n(A)(1) \to W_{n+1}(A)$.
\end{defn}
If $A$ is a graded $p$-polar $k$-algebra for a perfect field $k$ of characteristic $p$ then $CW^u(A)$ is naturally a $W(k)$-module.

\subsection{Representability of Witt and co-Witt vectors}

Since for graded $k$-algebras $A$ and $n \leq \infty$, $W_n(A)_j \cong \prod_{i=0}^n A_{jp^i}$ as sets, this set-valued functor is represented in graded $k$-algebras by
\[
(\Lambda^{(n)}_p)_j=k[\theta_{j,0},\dots,\theta_{j,n}]; \quad (\Lambda^{(\infty)}_p)_j = (\Lambda_p)_j = k[\theta_{j,0},\theta_{j,1},\dots]
\]
where $|\theta_{j,i}| = jp^i$, and $W_n(A)$, as a graded object, is represented by the bigraded $k$-algebra $\Lambda^{(n)}_p = (\Lambda^{(n)}_p)_*$. Each $(\Lambda^{(n)}_p)_j$ obtains a Hopf algebra structure by the natural Witt vector addition on $W_n(A)_j$, and $\Lambda^{(n)}_p$ becomes a Hopf ring (cf. \cite{wilson:hopf-rings}) with a comultiplication
\[
\Lambda_p(A)_j \to \bigoplus_{j_1+j_2=j} \Lambda_p(A)_{j_1} \otimes \Lambda_p(A)_{j_2}.
\]
In other words, $\Lambda_p$ represents a graded ring object, even a plethory, in affine schemes. This is a graded version of the $p$-typical symmetric functions of \cite[II.13]{borger-wieland:plethystic}.

The co-Witt vectors are not representable, but their restriction to $\alg_k$, i.e. finite-dimensional $k$-algebras, is ind-representable. That is, $CW^u_k$ is a formal group. In the ungraded case, this is described in \cite[\textsection II.3--4]{fontaine:groupes-divisibles}. In our graded case, $(W_n)_j$ is represented by $k\pow{\theta_{j,0},\dots,\theta_{j,n}}$ as before; we choose a power series notation as we would in the ungraded case, although for graded algebras of finite type, there is arguably no difference.

Thus $CW^u(A)_j$ is represented by the profinite ring 
\[
(\Reg{CW^u_k})_j = 
\lim_{n \geq 0} k\pow{x_{j,0},x_{j,-1},\dots,x_{j,-n}},
\]
with $|x_{j,i}| = jp^i$, which in the case $jp^i \not\in\Z$ is to be understood as $x_{j,i}=0$. Note that for $j = a p^l \neq 0$ with $p \nmid a$, the above formula simplifies to
\[
(\Reg{CW^u_k})_j = k\pow{x_{j,0},\dots,x_{j,-l}}.
\]

By naturality of the co-Witt vector addition, $CW^u_k$ thus becomes a (graded) formal group.

\begin{example}
Let $k=k_0$ be perfect of characteristic $p$ and $A = k \langle x^{p^i} \mid i \geq 0\rangle$ the free $p$-polar algebra on a single generator $x$ in degree $2$. Then
\[
W_n(A)_{2p^i} = \{(a_0 x^{p^i},a_1 x^{p^{i+1}},\dots,a_n x^{p^{i+n}}) \mid a_i \in W(k)\} \cong W_n(k)
\]
and $W_n(A)_j=0$ if $j$ is not twice a power of $p$. The Verschiebung is given by
\[
V\colon W_n(A)_{2p^i} \to W_{n+1}(A(-1))_{2p^i}, \quad (a_0,\dots,a_n) \mapsto (0,a_0,\dots,a_n).
\]
We have that
\[
CW^u(A)_{2p^i} = \{ (\dots,a_{-1},a_0) \mid a_j \in A_{2p^{i+j}}\},
\]
which is understood to mean $a_j=0$ if $2p^{i+j} \not\in\Z$. Thus $CW^u(A)_{2p^i} = W_i(k)$, and the Frobenius and Verschiebung maps are given by
\[
V\colon CW^u(A)_{2p^i} \to CW^u(A)_{2p^{i-1}}, \text{ the restriction map $W_i(k) \to W_{i-1}(k)$}
\]
and
\[
F\colon CW^u(A)_{2p^i} \to CW^u(A)_{2p^{i+1}}, \text{ the multiplication-by-$p$ map $W_i(k) \to W_{i+1}(k)$.}
\]
\end{example}

\section{Graded Dieudonné theory}\label{sec:dieudonne}

Throughout, let $k$ be a perfect field of characteristic $p$. Recall that for a graded $W(k)$-module $M$, the module $M(1)$ has
$M(1)_k = M_{pk}$ with $W(k)$-action given by $\alpha.m = \frob(\alpha)m$, where $\frob$ is the Frobenius endomorphism of $W(k)$.

\begin{defn}
A \emph{graded Dieudonné module} over $k$ is a graded $W(k)$-module $M$ together with maps of $W(k)$-modules
\[
F\colon M \to M(1) \quad \text{and} \quad V\colon M(1) \to M
\]
satisfying $FV=p$ and $VF=p$.
We denote the category of Dieudonné modules (with the obvious definition of morphism) by $\DMod{k}$.

We call a Dieudonné module $M$ \emph{unipotent} if for any $x\in M$, $V^n(x)=0$ for $n \gg 0$. We denote the full subcategory of unipotent Dieudonné modules by $\DModV{k}$. Note that for degree reasons, the unipotence condition can only possibly fail for $x$ of degree $0$.

Moreover, a Dieudonné module $M$ is called \emph{connected} if $M$ is profinite as a $W(k)$-module and has a fundamental system of neighborhoods consisting of (finite length) $W(k)$-modules $N$ such that $F^n(N)=0$ for $n \gg 0$. We denote the subcategory of connected Dieudonné modules and continuous homomorphisms by $\DDModF{k}$
\end{defn}

\begin{proof}[Proof of Lemma~\ref{lemma:wittofppolar}]
Lemma~\ref{lemma:Wittofppolar}\eqref{lemma:Wittofppolar:ppolarstructure} shows in particular that $W_n\colon \Alg_k \to \Ab$ factors through $\Pol_p(k)$, and part \eqref{lemma:Wittofppolar:FV} of the same lemma shows that this even holds as Dieudonné modules with the canonical operators $F$ and $V$ on $W_n$. Thus $W_n$ factors through $\Pol_p(k)$ for all $n \leq \infty$.

Moreover, we have a commutative diagram of abelian groups, natural in $B$,
\[
\begin{tikzcd}
 W_n(\pol_p(B)) \ar[r,"V"] \ar[d] & W_{n+1}(\pol_p(B)) \ar[d]\\
 W_n(B) \ar[r,"\pol_p(V)"] & W_{n+1}(B)
\end{tikzcd}
\]
Thus $CW^u(\pol(B)) \cong \colim_V CW_n(\pol(B)) \cong \colim_V CW_n(B) \cong CW^u(B)$ as Dieudonné modules and $CW^u$ also factors through $\Pol_p(k)$.

For $CW^c$, the factorization in degree $0$ was proved in \cite[Theorem~1.4]{bauer:p-polar}, and the claim in positive degrees follows from the result about $CW^u$ above and the fact that $CW^{c,u}(R)<CW^u(R)$ consists of the co-Witt vectors whose entries lie in the nilradical $\nil(R)$, which is a $p$-polar notion: $r^n=0$ for some $n\geq 1$ iff $r^{p^m}=0$ for some $m\geq0$.
\end{proof}

\begin{thm} \label{thm:generaldieudonne}
There are exact natural equivalences
\[
\DieuFor\colon \FGps{k}^c \to \DDModF{k} \quad \text{and} \quad D\colon \AbSch{k}^u \to \DModV{k}.
\]
The functor $\DieuFor$ is represented by the formal group $CW^c_k$, while the functor $D(G)$ is given by $D(G)=\colim_n \Hom_{\AbSch{k}}(G,W_n)$.
\end{thm}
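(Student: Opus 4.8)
The plan is to follow Fontaine's treatment of the ungraded Dieudonné correspondence \cite{fontaine:groupes-divisibles}, bootstrapping from it rather than rebuilding the homological machinery. Since Cartier duality is an anti-equivalence $\FGps{k} \to \AbSch{k}$ carrying $\FGps{k}^c$ onto $\AbSch{k}^u$ and swapping Frobenius with Verschiebung, it identifies $\FGps{k}^c$ with $(\AbSch{k}^u)^{\op}$ and, via the $F \leftrightarrow V$ swap together with the profinite dual, identifies $\DDModF{k}$ with $\DModV{k}$; hence it suffices to construct and analyse $D$ and then transport the result to obtain $\DieuFor$. By Prop.~\ref{prop:groupsplitting} every object of $\AbSch{k}^u$ is evenly graded, so at the level of representing Hopf algebras we work with honestly commutative graded rings and the Witt-vector apparatus of Section~\ref{sec:witt} applies directly. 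Using the $p$-typical decomposition of Remark~\ref{remark:ptypicalsplitting} I would split both categories into their degree-$0$ part and their positive-degree $p$-typical components and treat these separately.

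First I would equip $D(G) = \colim_n \Hom_{\AbSch{k}}(G, W_n)$ with the structure of an object of $\DModV{k}$. The grading is inherited from the internal grading of the Witt-vector group schemes $W_n$; the $W(k)$-module structure comes from the $W(k)$-algebra-scheme structure on $W_n$ of Lemma~\ref{lemma:Wittofppolar}\eqref{lemma:Wittofppolar:algebra}; and the operators $F\colon D(G) \to D(G)(1)$ and $V\colon D(G)(1) \to D(G)$ are induced by post-composition with the Frobenius $F\colon W_{n+1} \to W_n(1)$ and Verschiebung $V\colon W_n(1) \to W_{n+1}$ of Lemma~\ref{lemma:Wittofppolar}\eqref{lemma:Wittofppolar:FV}, the transition maps in the colimit being the $V$. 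The relations $FV = VF = p$ and the $W(k)$-linearity of $F$ and $V$ are then inherited from the corresponding Witt-vector identities, and naturality in $G$ is automatic. Finally $D(G)$ is unipotent: in positive degrees $V$-nilpotence holds for degree reasons, while in degree $0$ it is precisely the conilpotence of $\Reg{G}$.

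The heart of the matter is to show that $D$ is an exact equivalence. The degree-$0$ part is ungraded and is handled by the classical theorem of \cite[\textsection II.3--4]{fontaine:groupes-divisibles}, which I would cite. For a positive $p$-typical degree $j$ (with $p \nmid j$) an object of $\DModV{k}$ supported there is a family $(M_{jp^i})_{i \geq 0}$ linked by $F$ (raising the index) and $V$ (lowering it) with $FV = VF = p$, where $V$-nilpotence is automatic since iterating $V$ eventually lands below degree $j$. Here I would run the dévissage parallel to the ungraded proof, simplified by this automatic nilpotence: $\AbSch{k}^u$ is generated, under extensions and filtered colimits, by the Frobenius twists of the additive group $\mathbb{G}_a = W_1$ placed in the various degrees $jp^i$; one computes $D$ on these building blocks directly from $\Hom(\mathbb{G}_a, W_n)$, obtaining the expected cyclic Dieudonné modules; and a five-lemma argument then promotes this computation to full faithfulness and essential surjectivity. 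The main obstacle I anticipate is precisely the exactness input needed for this last step: one must verify that $\colim_n W_n = CW^u_k$ is an injective cogenerator of the graded unipotent category, so that $D$ sends short exact sequences of groups to short exact sequences of modules. The cleanest route is to check degreewise that the relevant $\operatorname{Ext}^1$-groups against the $W_n$ vanish, using the explicit representing Hopf algebras $\Lambda^{(n)}_p$ of Section~\ref{sec:witt} and the cofree coalgebras of Lemma~\ref{lemma:existenceoffree}.

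It then remains to deduce the formal, connected statement. Applying the reductions of the first paragraph, Cartier duality carries the exact equivalence $D\colon \AbSch{k}^u \to \DModV{k}$ to an exact equivalence $\DieuFor\colon \FGps{k}^c \to \DDModF{k}$. Representability of $\DieuFor$ by $CW^c_k$ follows by dualizing the identity $D(G) = \colim_n \Hom_{\AbSch{k}}(G, W_n)$: one obtains $\DieuFor(G) \cong \Hom(G, CW^c_k)$, where $CW^c_k$ is the formal group of connected co-Witt vectors constructed in Section~\ref{sec:witt}.
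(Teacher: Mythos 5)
Your proposal follows the same skeleton as the paper's proof: split off the degree-zero part and handle it by the classical ungraded theory of Fontaine, treat the positively graded ($p$-typical, automatically $V$-nilpotent) part separately, and obtain the formal statement from the affine one by composing Cartier duality with a profinite (Matlis-type) duality that swaps $F$ and $V$ --- this last composite is exactly how the paper defines $\DieuFor$, as $I\circ D\circ(-)^*$. The one place where you genuinely diverge is the positively graded case: the paper simply cites Schoeller's graded Dieudonn\'e theory (\cite{schoeller:Hopf}, cf.\ also \cite{goerss:hopf-rings,ravenel:dieudonne}) for the equivalence on connected graded Hopf algebras, whereas you propose to rederive it by d\'evissage from the building blocks $\Hom(W_1,W_n)$ in each degree $jp^i$. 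That route is viable and is essentially how Schoeller's theorem is proved, but note that you have only \emph{announced} its technical core: the vanishing of the relevant $\operatorname{Ext}^1$-groups against the $W_n$ (equivalently, that $CW^u_k$ is an injective cogenerator of the graded unipotent category), which is what makes $D$ exact and lets the five-lemma promote the computation on generators to an equivalence. As written, your argument therefore reduces the theorem to precisely the statement that the cited literature supplies; either carry out the $\operatorname{Ext}$-computation (degreewise, using the representing Hopf algebras $(\Lambda^{(n)}_p)_j$) or cite Schoeller for it. Two smaller points: $D(G)=\colim_n\Hom_{\AbSch{k}}(G,W_n)$ is contravariant in $G$, so the equivalence is between $(\AbSch{k}^u)^{\op}$ and $\DModV{k}$ (the paper is itself inconsistent on this between the introduction and the theorem statement); and the identification of $\DieuFor$ with the functor represented by $CW^c_k$ requires an argument beyond formally dualizing the colimit formula --- the paper defers this to Demazure \cite[Ch.~III,6]{demazure:pdivisiblegroups}.
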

\begin{proof}
Since every affine group splits naturally into its degree-$0$ part and a part represented by a Hopf algebra $H$ that is connected in the sense that $H_0=k$, the result follows from the conjunction of \cite[\textsection III Théorème 1]{fontaine:groupes-divisibles} and \cite{schoeller:Hopf}, cf. also \cite[Theorem~4.2]{bauer:p-polar} and \cite{goerss:hopf-rings,ravenel:dieudonne}.

The equivalence $\DieuFor$ is given by the composition
\[
\FGps{k}^c  \xrightarrow{(-)^*} \AbSch{k}^{u}\xrightarrow{D} \DModV{k}\xrightarrow{I} (\DDModF{k}),
\]
where $(-)^*$ denotes Cartier duality and $I$ denotes Matlis (Poincaré) duality
\[
M \mapsto \Hom^c_{W(k)}(M,CW(k)),
\]
the group of continuous homomorphisms into $CW(k)$. Since both $(-)^*$ and $I$ are anti-equivalences, the affine case follows from the formal case. The fact that $\DieuFor$ is isomorphic to the claimed form is proved in \cite[Ch.~III,6]{demazure:pdivisiblegroups}.
\end{proof}

\begin{proof}[Proof of Thm.~\ref{thm:dieudonneoffree}]
Consider first the formal case. For a finite-dimensional $k$-algebra $A$, we have that
\[
\DieuFor(\Fr^c(A)) = \Hom_{\FGps{k}}(\Fr^c(A),CW^c_k) = \Hom_{\FSch{k}}(\Spec A, CW^c_k) = CW^c(A),
\]
as claimed.

In the affine case, for a $k$-algebra $A$, we have
\[
D(\Fr^u(A)) = \colim_n \Hom_{\AbSch{k}}(\Fr(A),W_n) = \colim W_n(A) = CW^u(A). \qedhere
\]
\end{proof}

\begin{proof}[Proof of Thm.~\ref{thm:freeschemefactorization}]
In the formal case, the existence of the adjoint $\Fr = \Fr^{\odd} \times \Fr^c \times \Fr^e$ follows from Lemma~\ref{lemma:existenceoffree}. The factorization $\tilde \Fr(A)$ for $A \in \pol_p(k)$ is given by:
\begin{itemize}
	\item $\tilde \Fr^{\odd}(A)$ is defined as in Prop.~\ref{prop:oddfree};
	\item $\tilde \Fr^e(A) = \Fr^e(A_0)$ is defined as in \cite{bauer:p-polar};
	\item $\tilde \Fr^c(A) = (\DieuFor)^{-1}(CW^c(A))$, using Thm.~\ref{thm:dieudonneoffree}.
\end{itemize}

The affine case is completely analogous.
\end{proof}

\section{Properties and applications}\label{sec:applications}

The factorization of the free formal group functor (Thm.~\ref{thm:freeschemefactorization}) induces a factorization
\[
\begin{tikzcd}
(\Pro-\alg_k)^{\op} \arrow[r,"\Fr"] \ar[dr,swap,"\pol"] & \FGps{k}\\
& (\Pro-\pol_p(k))^{\op}, \ar[u,"\tilde \Fr"]\\
\end{tikzcd}
\]
where the functors denoted by $\Fr$ and $\tilde \Fr$ are the unique extension of the functors from Thm.~\ref{thm:freeschemefactorization} that commute with directed colimits. In this section, we will concentrate on the free unipotent, resp. connected, construction only.

\begin{lemma}\label{lemma:formalfreeadjoint}
The functor $\hat \Fr{}^c\colon (\Pro-\pol_p(k))^{\op} \to \FGps{k}$ commutes with all colimits and has a right adjoint $V$.
\end{lemma}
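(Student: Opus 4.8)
The plan is to transport the entire statement across the Dieudonné anti-equivalence $\DieuFor$ of Thm.~\ref{thm:generaldieudonne} and thereby reduce it to a left-exactness property of the connected co-Witt functor. Since $\DieuFor\colon (\FGps{k}^c)^{\op} \to \DDModFc{k}$ is an anti-equivalence, it interchanges colimits with limits and transports adjunctions; and by Thm.~\ref{thm:dieudonneoffree} the composite $\DieuFor \circ \hat\Fr^c$ is (after extension to pro-objects) the functor $CW^c$. Tracing variances, the assertion ``$\hat\Fr^c$ commutes with all colimits'' becomes ``$CW^c\colon \Pro-\pol_p(k) \to \DDModFc{k}$ preserves all small limits,'' and the right adjoint $V$ of $\hat\Fr^c$ corresponds to an adjoint of $CW^c$ on the module side. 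The first step is to set up this dictionary carefully, keeping precise track of which side carries the opposite category.

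The second, and principal, step is to prove that $CW^c$ is continuous. I would first record that $\hat\Fr^c$ preserves directed colimits by construction, being defined as the directed-colimit-preserving extension; since $(\Pro-\pol_p(k))^{\op} \cong \operatorname{Ind}(\pol_p(k)^{\op})$ is locally finitely presentable, all colimits are generated by filtered colimits together with finite colimits of compact objects, so on the module side only finite limits of objects of $\pol_p(k)$ remain. The key observation is that the Witt functor $W_n$ acts coordinatewise on morphisms: for a map $f$ of $p$-polar algebras, $W_n(f)$ applies $f$ to each entry of $\prod_i A_{jp^i}$. Hence $W_n$ preserves finite products (the classical $W_n(A\times B)\cong W_n(A)\times W_n(B)$) and equalizers, since $(a_i)$ lies in the equalizer of $W_n(f), W_n(g)$ iff every $a_i$ lies in the equalizer of $f,g$, and the latter is exactly the evident finite-dimensional sub-$p$-polar algebra computing the equalizer in $\pol_p(k)$. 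Passing to $CW^u(A)=\colim_n W_n(A(-n))$, I would invoke that filtered colimits commute with finite limits in $W(k)$-modules and that the shift $(-n)$ is itself continuous, to conclude finite-limit preservation.

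The delicate point — the one I expect to be the genuine obstacle — is the ``connected'' decoration. Unlike $CW^u$, the functor $CW^c$ is assembled from the classical connected co-Witt group $CW^c(R_0)$ in degree $0$ and from the subgroup $CW^{c,u}(R)$ of unipotent co-Witt vectors all of whose entries lie in the radical $\tilde R$ in the remaining degrees. I would therefore have to check that formation of the radical, and the resulting radical condition on co-Witt entries, is compatible with finite products and equalizers of finite-dimensional $p$-polar algebras: the radical of a product is the product of the radicals, and the radical of the equalizer sub-object is the induced one, so that this refinement still preserves finite limits. Granting this compatibility, $CW^c$ is continuous and hence $\hat\Fr^c$ preserves all colimits.

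For the right adjoint I would then appeal to the adjoint functor theorem for locally presentable categories. The source $(\Pro-\pol_p(k))^{\op} \cong \operatorname{Ind}(\pol_p(k)^{\op})$ is locally finitely presentable, and $\FGps{k}$ is locally presentable, being by Prop.~\ref{prop:groupsplitting} a finite product of categories each equivalent, via Dieudonné theory, to a category of modules. A colimit-preserving functor between locally presentable categories admits a right adjoint, which furnishes $V$ and completes the proof. (Alternatively, one can exhibit $V$ explicitly as the functor sending a formal group to the polarization of its underlying pro-algebra and verify the adjunction directly; the presentability argument is cleaner and subsumes the colimit claim, since left adjoints automatically preserve colimits.)
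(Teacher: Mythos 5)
Your overall strategy coincides with the paper's: both arguments reduce, via the Dieudonné anti\nobreakdash-equivalence and Thm.~\ref{thm:dieudonneoffree}, to showing that $CW^c_k\colon \Pro-\pol_p(k) \to \DDModF{k}$ preserves all limits, and then produce the right adjoint from an adjoint functor theorem. The difference lies in how the two halves are discharged. For the continuity of $CW^c_k$ the paper's argument is one line: $CW^c_k$ is a formal group, i.e.\ representable (by the pro-algebra $\Reg{CW^c_k}$), hence preserves limits for formal reasons. Your route --- reducing all limits in $\Pro-\pol_p(k)$ to cofiltered limits plus finite limits of objects of $\pol_p(k)$, and then checking by hand that $W_n$, being coordinatewise on morphisms, preserves finite products and equalizers --- is more laborious but legitimate, and it has the merit of confronting explicitly the compatibility of the radical condition defining $CW^{c,u}$ with finite limits, a point the representability argument hides. (The paper's proof also treats the odd summand $\hat\Fr{}^{\odd}$ by hand, via primitives and the free $p$-polar algebra functor; since the statement is phrased for $\hat\Fr{}^c$, your omission of that case is defensible.)

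The one genuine weak point is your justification for the existence of the right adjoint. You invoke the adjoint functor theorem for locally presentable categories, asserting that $\FGps{k}$ is locally presentable because Dieudonné theory identifies its factors with ``categories of modules.'' But $\DDModFc{k}$ consists of \emph{profinite} Dieudonné modules --- a pro-category of finite-length modules, hence (essentially) the opposite of a locally finitely presentable category --- and this is not a Grothendieck category of modules; local presentability of $\FGps{k}$ is at best non-obvious and your stated reason for it does not hold up. The conclusion survives because the special adjoint functor theorem for right adjoints places all of its hypotheses on the \emph{source}: it suffices that $(\Pro-\pol_p(k))^{\op}$ be cocomplete, co-well-powered and have a generating set --- equivalently, that $\Pro-\pol_p(k)$ be complete, well-powered and have a cogenerating set --- together with local smallness of $\FGps{k}$. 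This is exactly what the paper verifies (completeness from finite completeness of $\pol_p(k)$, constant objects as a cogenerating class, and well-poweredness via the levelwise representation of monomorphisms of pro-objects from Artin--Mazur). Replacing your appeal to presentability of the target by this source-only form of the theorem closes the argument.
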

\begin{proof}
For the odd part, the functor $\hat\Fr^{\odd}$ factors as
\[
\hat\Fr{}^{\odd}\colon (\Pro-\pol_p(k))^{\op} \xrightarrow{U} (\Pro-\operatorname{mod}_k^{\odd})^{\op} \xrightarrow{\bigwedge} \AbSch{k}^p
\]
(cf. Prop.~\ref{prop:oddfree}), where $U$ is the forgetful functor. Then an adjoint is given by the composition of the adjoint of the exterior Hopf algebra functor $\bigwedge$ (which is the functor of primitives) and an adjoint of $U$. The latter is the objectwise free $p$-polar algebra functor, which works because the free $p$-polar algebra on an odd finite-dimensional $k$-module is again finite dimensional (it is a sub-$p$-polar algebra of the exterior algebra).

So we can restrict our attention to even formal groups. For the free connected formal group $\Fr^c$, it suffices to show that its composition with the Dieudonné equivalence $\DieuFor$ commutes with the stated colimits, and by Theorem~\ref{thm:dieudonneoffree}, it is therefore enough to show that $CW_k^c\colon \Pro-\pol_p(k) \to \DDModF{k}$ commutes with all limits. But $CW_k^c$ is a formal group, i.e. representable. 

The existence of an adjoint follows from Freyd's special adjoint functor theorem once we show that $\Pro-\pol_p(k)$ is complete, well-powered, and possesses a cogenerating set. Any pro-category of a finitely complete category, such as $\pol_p(k)$, is complete, and any pro-category has the constant objects as a cogenerating class. Since $\pol_p(k)$ has a small skeleton, the condition on a cogenerating set is satisfied.
To see that $\Pro-\pol_p(k)$ is well-powered, observe that a subobject $S < A$ for $A \in \Pro-\pol_p(k)$ is in particular a sub-pro-vector space. By \cite[Prop. 4.6]{artin-mazur:etale}, a monomorphism in $\Pro-\Mod_k$ can be represented by a levelwise monomorphism. Thus if $A\colon I \to \Mod_k$ represents a pro-finite $k$-module with $\#\operatorname{Sub}(A(i)) = \alpha_i$ for some (finite) cardinals $\alpha_i$ then $\#\operatorname{Sub}(A) \leq \prod_{i \in I} \alpha_i$; in particular, it is a set.
\end{proof}

\begin{lemma}\label{lemma:affinefreeadjoint}
The functor $\hat \Fr{}^u\colon \Pol_p(k)^{\op} \to \AbSch{k}^u$ commutes with all colimits and filtered limits, and has a right adjoint $V$.
\end{lemma}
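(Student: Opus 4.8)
The plan is to proceed exactly as in the proof of Lemma~\ref{lemma:formalfreeadjoint}, transporting the statement about $\hat\Fr{}^u$ across the Dieudonné anti-equivalence $D\colon \AbSch{k}^u \to \DModV{k}$ of Thm~\ref{thm:generaldieudonne}. As in the formal case, any odd part is disposed of first by the explicit exterior-algebra description of Prop.~\ref{prop:oddfree}: $\hat\Fr{}^{\odd}$ factors through the forgetful functor to oddly graded modules followed by $\bigwedge$, and an adjoint is assembled from the functor of primitives and the objectwise free $p$-polar algebra, so that part is cocontinuous and admits an adjoint for formal reasons. It therefore remains to treat the evenly graded unipotent part. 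By Thm~\ref{thm:dieudonneoffree} we have $D(\hat\Fr{}^u(A)) \cong CW^u(A)$ naturally in $A$; since $D$ is an anti-equivalence (interchanging colimits and limits), the claim that $\hat\Fr{}^u$ commutes with all colimits, resp.\ with filtered limits, in $\Pol_p(k)^{\op}$ is equivalent to the claim that $CW^u\colon \Pol_p(k) \to \DModV{k}$ preserves all limits, resp.\ all filtered colimits.

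The core of the argument is then a direct analysis of $CW^u$, replacing the representability input used in the formal case, since the affine $CW^u$ is \emph{not} representable. First, limits and filtered colimits in $\Pol_p(k)$ are computed on underlying graded $k$-modules: by Lemma~\ref{lemma:nonperiodicchar} the category is cut out of $M_k$ by finitary identities and is thus closed in $M_k$ under products, subobjects, and filtered colimits. Second, the grading makes $CW^u$ degreewise very small in positive degrees: for $j = ap^m$ with $p \nmid a$ and $m \ge 0$, the defining colimit stabilizes and one obtains a natural isomorphism $CW^u(A)_j \cong \prod_{s=0}^{m} A_{ap^s}$, a \emph{finite} product of degree-evaluation functors, each of which preserves limits and filtered colimits; hence so does the finite product, settling all positive degrees. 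In degree $0$ one has $CW^u(A)_0 = CW^u(A_0)$, the classical ungraded unipotent co-Witt functor $\colim_n W_n(A_0)$: preservation of filtered colimits is immediate by interchanging two filtered colimits, while preservation of limits uses that $\DModV{k}$ is coreflective in $\DMod{k}$ via $M \mapsto M^{\un}$, so that a product in $\DModV{k}$ is the unipotent coreflection of the naive product and therefore agrees with $\colim_n \prod_\alpha W_n$. (Alternatively, the degree-$0$ statement is the ungraded case already treated in \cite{bauer:p-polar}.)

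Finally, for the right adjoint $V$ I would invoke the adjoint functor theorem in the locally presentable setting: both $\Pol_p(k)$ (an equationally defined, hence locally finitely presentable, subcategory of $M_k$) and $\DModV{k}$ are locally presentable, and by the previous paragraph $CW^u$ preserves all limits and is finitary, hence accessible; therefore $CW^u$ admits a left adjoint $L$, and precomposing with $D$ yields the sought right adjoint $V = L\circ D$ of $\hat\Fr{}^u$. One may instead mimic the formal proof verbatim and apply Freyd's special adjoint functor theorem to the cocontinuous $\hat\Fr{}^u$, for which one checks that $\Pol_p(k)$ is complete, well-powered (its subobjects are sub-graded-modules, hence form a set, cf.\ \cite{artin-mazur:etale}), and admits a cogenerating set. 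I expect the main obstacle to be precisely the degree-$0$ behaviour: naively $\colim_n W_n$ fails to commute with infinite products, so the assertion that $CW^u$ nevertheless preserves products rests entirely on computing products in $\DModV{k}$ as unipotent coreflections rather than naive products; a secondary, more routine point is verifying the set-theoretic hypotheses (accessibility and local presentability, or the cogenerating set) required to produce $V$.
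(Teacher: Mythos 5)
Your proposal is correct and follows essentially the same route as the paper: reduce to showing $CW^u\colon \Pol_p(k)\to\DModV{k}$ preserves limits and filtered colimits via the Dieudonné equivalence, observe that infinite products in $\DModV{k}$ are the unipotent part of the naive product (which is exactly how the paper resolves the apparent failure of $\colim_n W_n$ to commute with products), and obtain the adjoint from the adjoint functor theorem for locally presentable categories. Your degreewise stabilization argument in positive degrees is a harmless refinement of the paper's more direct verification.
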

\begin{proof}
As in Lemma~\ref{lemma:formalfreeadjoint}, the right adjoint on odd affine groups is given by the functor of primitives follow by the free $p$-polar algebra functor, so we will restrict our attention to even $p$-adic affine groups. By Theorem~\ref{thm:dieudonneoffree}, it suffices to show that the functors $CW^u\colon \Pol_p(k) \to \DModV{k}$ commutes with all limits, which looks wrong until one realizes that limits in $\DModV{k}$ are not the same as limits in $\DMod{k}$.

The functor $CW^u(A)$ commutes with finite limits (it is ind-representable), so it suffices to show it commutes with infinite products. Indeed, the natural map
\[
CW^u(\prod_i A_i) \to \prod_i CW^u(A_i)
\]
is an isomorphism; an element in the right hand side is a set of elements $(x_i \in CW^u(A_i))$ such that there is an $n \gg 0$ such that $V^n(x_i)=0$ for all $i$. 

% For the other functor, consider first the case $k=\bar k$, i.e. the functor
% \[
% A \mapsto \mu_{p^\infty}(A) \otimes W(k) = \colim_n \mu_{p^n}(A) \otimes W(k).
% \]
% This functor commutes with finite limits as well since $W(k)$ is flat and directed colimits commute with finite limits. The Dieudonné module structure on $\mu_{p^\infty}(A) \otimes W(k)$ is given by the $W(k)$-factor. In particular, $F$ the Witt Frobenius $\frob$, and $V = p \frob^{-1}$ is given by multiplication by $p$ on $\mu_{p^\infty}(A)$ and $\frob^{-1}$ on $W(k)$.

% So if $A_i$ is an infinite family of $p$-polar $k$-algebras then the image of the natural inclusion
% \[
% \mu_{p^\infty}(\prod _i A_i) \otimes W(k) \to \prod_i \Bigl( \mu_{p^\infty}(A_i) \otimes W(k)\Bigr)
% \]
% is a set of elements $(a_i \otimes x_i)$ such that there is a common upper bound on the order of the $a_i$; i.e., the map is an isomorphism. By Galois descent, the result follows for arbitrary perfect $k$.

The commutation with filtered colimits is straightforward: $CW^u$ commutes with them because it is a colimits of functors represented by small objects (polarizations of finitely presented algebras).

For the existence of an adjoint, we again apply the special adjoint functor theorem in the form of \cite[Thm.~1.66]{adamek-rosicky:presentable-accessible}. The category $\Pol_p(k)$ is locally presentable and the functor $D \circ \hat Fr^u$, as just shown, is accessible (commutes with $\omega$-filtered colimits) and commutes with all limits.
\end{proof}

Note that this implies, by taking adjoint functors in Thm.~\ref{thm:freeschemefactorization}, that the algebra underlying a unipotent Hopf algebra $H$ is always of the form $\hull(V(H))$, i.e. free over a $p$-polar $k$-algebra, and the pro-finite algebra underlying a formal connected Hopf algebra $H$ is always of the form $\hull(V(H))$, i.e. free over a profinite $p$-polar $k$-algebra. Of course, this is also a direct corollary of Borel's work on the structure of algebras underlying Hopf algebras \cite{borel:homologie-des-groupes-de-Lie,milnor-moore:hopf} .

Let us call a graded module $M$ \emph{$p$-typical of degree $j$} if $M_n=0$ unless $n=jp^i$ for some $i$ and denote their full subcategory by $\Mod_k^{(j)}$. Recall from \cite{ravenel:dieudonne} that an abelian Hopf algebra over $k$ is called of \emph{type $j$} if its modules of primitives and indecomposables are $p$-typical of degree $j$, or equivalently, if its Dieudonné module is $p$-typical of degree $j$. Denote the category of Hopf algebras of type $j$ by $\Hopf{k}^{(j)}$. Similarly, let $\Pol_p^{(j)}(k)$ denote the category of $p$-typical $p$-polar $k$-algebras of degree~$j$.

\begin{lemma}\label{lemma:Vofptypical}
If $A \in \Pol_p^{(j)}(k)$ then $\Cof^u(A) \in \Hopf{k}^{(j)}$. Conversely, if $H \in \Hopf{k}^{(j)}$ then $V(H) \in \Pol_p^{(j)}(k)$, and if $j \neq 0$, $V(H) = H_{(j)}$.
\end{lemma}
\begin{proof}
If $A \in \Pol_p^{(j)}(k)$ then $CW^u(A)$ is $p$-typical by definition, hence $\Cof^u(A) = D^{-1}CW^u(A)$ is of type $j$.

Let $H \in \Hopf{k}^{(j)}$ and $A \in \Pol_p^{(j')}(k)$ a $p$-typical $p$-polar algebra of a possibly different degree $j'$. Then
\[
\Hom_{\Pol_p(k)}(VH,A) \cong \Hom_{\Hopf{k}}(V,\Cof(A)) = 0 \text{ unless $j'=j$}
\]
since $\Cof(A)$ has type $j'$ and there are no nontrivial maps between Hopf algebras of different types. Hence $VH \in \Pol_p^{(j)}(k)$.

If $j \neq 0$ then for $P \in \Pol^{(j)}_p(k)$, $P = \hull(P)_{(j)}$. In particular, for $H \in \Hopf{k}^{(j)}$,
\[
VH = \hull(VH)_{(j)} = H_{(j)}. \qedhere
\]
\end{proof}

In the ungraded case, it is harder to describe $V(H)$ explicitly. However, there is a cyclic grading by the group $\Z/(p-1)\Z$ implicit in the construction of the functor $\hull$. Let $\Alg_k^c$, $\Hopf{k}^c$, $\Pol_p^c$ be the $\Z/(p-1)$-graded versions of $\Alg_k$, $\Hopf{k}$, and $\Pol_p$. Let $\Pol_p^{c,1}$ be the direct factor full subcategory of cyclically graded $p$-polar algebras that are concentrated in degree $1$, and let $\Hopf{k}^{c,1}$ be the full subcategory of Hopf algebras all of whose primitives and decomposables are in degree $1$. This is, of course, canonically equivalent to the category $\Pol_p^{(0)}(k)$ of ungraded $p$-polar $k$-algebras.

There is a natural factorization $\hull^c$ of the functor $\hull$:
\[
\begin{tikzcd}
\Pol_p^{c,1} \ar[r,"\hull^c"] \ar[d,swap,"\cong"] & \Alg_k^{c} \ar[d,"U"]\\
\Pol_p^{(0)}(k) \ar[r,"\hull"] & \Alg_k^{(0)},
\end{tikzcd}
\]
where $U$ is the functor forgetting the grading.

The functor $\hull^c$ is left adjoint to the functor
\[
(-)_1\colon \Alg_k^{c} \to \Pol^{c,1}; \quad A \mapsto A_1.
\]
Note that $U$ does not have a left adjoint.

We again have a natural isomorphism $P \cong (\hull^{c}(P))_1$ for $P \in \Pol_p^{c,1}$. The cofree functor $\Cof^u\colon \Pol_p(k) \to \Hopf{k}$ respects the cyclic grading:
\[
\Cof^{u,c}\colon \Pol_p^{c} \cong \Pol_p^{c,1} \to \Hopf{k}^{c},
\]
and the cyclically graded analog of Lemma~\ref{lemma:Vofptypical} is:
\begin{lemma}\label{lemma:Vcyclicallygraded}
If $A \in \Pol^{(0)}_p(k)$ then $\Cof^{u,c}(A) \in \Hopf{k}^{c,1}$. Conversely, if $H \in \Hopf{j}^{c,1}$ then $V(H) \in \Pol_{p}^{c,1} \cong \Pol_{p}^{(0)}(k)$, and $V(H) \cong H_1$. \qed
\end{lemma}

\begin{lemma}\label{lemma:cofreeiscofreeascoalg}
Let $A$ be a $p$-polar $k$-algebra and $H=\Cof^u(A)$ the unipotent Hopf algebra representing the $p$-adic affine group $\hat \Fr{}^u(A)$. Then $H$ is isomorphic, as a pointed coalgebra, to the symmetric tensor coalgebra on the $k$-vector space $A$.
\end{lemma}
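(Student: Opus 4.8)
The symmetric tensor coalgebra on $A$ is $S(A)=\bigoplus_{n\ge 0}(A^{\otimes_k n})^{\Sigma_n}$ with the (symmetrised) deconcatenation comultiplication; it is the graded dual of the completed symmetric algebra $\prod_{n\ge 0}\Sym^n(A^\vee)$ and is the standard model for the cofree conilpotent cocommutative coalgebra on the vector space $A$. The plan is thus to prove that the coalgebra underlying $H=\Cof^u(A)$ is isomorphic to $S(A)$, naturally in $A$. The essential point to keep in mind is that this is a statement about comultiplication alone: the merely $p$-polar multiplication of $A$ enters $H$ solely through its algebra structure and is invisible to the coalgebra, so morally $S(A)$ should not see the difference between $A$ and an honest algebra.

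First I would dispose of the case $A=\pol(B)$ for an honest algebra $B\in\Alg_k$. Here the factorization $\Fr^u=\hat\Fr^u\circ\pol$ of Thm.~\ref{thm:freeschemefactorization} gives $\Cof^u(\pol B)=\Reg{\Fr^u(B)}$, whose underlying coalgebra is $\bigoplus_n(B^{\otimes_k n})^{\Sigma_n}$ with deconcatenation by Takeuchi's construction (Lemma~\ref{lemma:existenceoffree}). Since $\pol B$ and $B$ have the same underlying vector space, this is exactly $S(A)$, and the identification is manifestly independent of whether the multiplication on $A$ is taken $p$-fold or binary. This settles the lemma on the essential image of $\pol$.

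To treat a general $p$-polar $A$, note first that $\Cof^u$, being a right adjoint (Lemma~\ref{lemma:affinefreeadjoint}), preserves monomorphisms, so the defining embedding $A\hookrightarrow\pol(\hull A)$ yields a monomorphism of coalgebras $H\hookrightarrow\Cof^u(\pol\hull A)=S(\hull A)$, the target being identified by the previous step. Moreover the adjunction $\hat\Fr^u\dashv V$ computes the primitives: in degree $d$ they are $\Hom_{\AbSch{k}}(\hat\Fr^u(A),\mathbb G_a^{(d)})\cong\Hom_{\Pol_p(k)}(V(\mathbb G_a^{(d)}),A)$, where $\mathbb G_a^{(d)}$ is the additive group with coordinate in degree $d$. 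Since the algebra underlying $\mathbb G_a^{(d)}$ is $k[t]$ on one generator of degree $d$, the identity underlying algebra $=\hull\circ V$ (remark after Lemma~\ref{lemma:affinefreeadjoint}) makes $V(\mathbb G_a^{(d)})$ the free $p$-polar algebra on one generator, so that the primitives of $H$ are naturally $A$, exactly as for $S(A)$.

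It remains to identify the image of $H$ in $S(\hull A)$ with the subcoalgebra $S(A)$, and this is the main obstacle. The inclusion of primitives $A\subseteq\hull A$ alone does not force the image into $S(A)$, because in characteristic $p$ a conilpotent coalgebra can acquire new indecomposable elements at each level $p^i$ — the divided powers $a^{[p^i]}$ — which are not products of primitives, as the naive $p^i$th power of a primitive maps to $(p^i)!\,a^{[p^i]}=0$. The content of the lemma is therefore the precise bookkeeping of these divided powers, and this is exactly what the $p$-polar and Witt structure provides: under the Dieudonné computation $D(\hat\Fr^u A)\cong CW^u(A)$ of Thm.~\ref{thm:dieudonneoffree}, the entries of a unipotent co-Witt vector are the divided powers $a^{[p^i]}$ of a primitive $a$ (as already visible in the Witt-vector example for the free $p$-polar algebra on one generator), while a general $a^{[n]}$ is recovered as the product $\prod_i (a^{[p^i]})^{n_i}$ for the base-$p$ expansion $n=\sum_i n_i p^i$. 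Checking that these account for a basis of $(A^{\otimes_k n})^{\Sigma_n}$ and for nothing more identifies the image with $S(A)$ and completes the isomorphism of pointed coalgebras.
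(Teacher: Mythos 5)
Your proof has a genuine gap at exactly the point you yourself flag as ``the main obstacle.'' After embedding $H=\Cof^u(A)$ into $S(\hull A)$ and computing that $PH\cong A$, you still must show that the image of $H$ is precisely the subcoalgebra $S(A)\subseteq S(\hull A)$ --- and, as you correctly observe, knowing the primitives does not determine a conilpotent coalgebra in characteristic $p$ because of divided powers. The final paragraph does not close this: the assertions that the entries of a unipotent co-Witt vector ``are the divided powers $a^{[p^i]}$'' and that their products ``account for a basis of $(A^{\otimes_k n})^{\Sigma_n}$ and for nothing more'' are precisely the statements that need proof, and no argument is given for either. (Two smaller soft spots: the identification of $V(\mathbb G_a^{(d)})$ with the \emph{free} $p$-polar algebra on one generator does not follow just from $\hull(V(\mathbb G_a^{(d)}))\cong k[t]$, since non-isomorphic sub-$p$-polar algebras of $\pol(k[t])$ can have the same hull; and the base case $A=\pol(B)$ relies on Takeuchi's $\Cof^u(B)$ having the deconcatenation coproduct, which Lemma~\ref{lemma:existenceoffree} describes only at the level of underlying graded vector spaces.)

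For comparison, the paper sidesteps the divided-power bookkeeping entirely by a formal argument: it asserts that the square $U_2\circ\Cof^u\cong S\circ U_1$ of right adjoints $2$-commutes, passes to the square of left adjoints $\Fr\circ U\cong V\circ\Fr$, writes down the natural comparison map $\Fr(U(C))\to V(\Fr(C))$ induced by $C\to\Sym(\bar C)$, and checks it is an isomorphism by applying the conservative functor $\hull\colon\Pol_p(k)\to\Alg_k$, under which it becomes the identity of $\Sym(\bar C)$. If you want to rescue your approach, you would need to carry out the co-Witt-vector computation of the coradical filtration of $\Cof^u(A)$ explicitly; the adjoint-functor route avoids this entirely.
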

\begin{proof}
The symmetric tensor coalgebra on a vector space $V$ is given by
\[
S(V) = \bigoplus_{i \geq 0} S^i(V) \quad \text{with} \quad S^i(V)= \bigl(V^{\otimes i}\bigr)^{\Sigma_i},
\]
and it is a pointed coalgebra by the inclusion $k \cong S^0(V) \subset S(V)$. A pointed coalgebra $C$ is conilpotent if for each $x \in C$ there exists an $N> 0$ such that $\psi^N(x) \in C^{\otimes (N+1)}$ maps to $0$ in $\bar C^{\otimes (N+1)}$, where $\bar C$ is the cokernel of the pointing. The coalgebra $S(V)$ is conilpotent and, indeed, the right adjoint to the forgetful functor $U$ from the category $\Coalg_k^u$ of pointed, conilpotent, cocommutative coalgebras to $k$-vector spaces, mapping a coalgebra $C$ to $\bar C$.

Note that a Hopf algebra is unipotent if and only if its underlying pointed coalgebra is conilpotent. The claim is that the diagram
\begin{equation}\label{eq:rightadjointsquare}
\begin{tikzcd}
\Pol_p(k) \arrow[r,"\Cof^u"] \arrow[d,"U_1"] & \Hopf{k}^u \arrow[d,"U_2"]\\
\Mod_k \arrow[r,"S"] & \Coalg_{k}^u
\end{tikzcd}
\end{equation}
$2$-commutes. Since all the functors in this diagram are right adjoints, they commute with limits, and thus by Rem.~\ref{remark:ptypicalsplitting} it suffices to show the claim in the cases where $A$ is $p$-typical of degree $j \neq 0$ and where $A=A_0$ is ungraded.

By Lemma~\ref{lemma:Vofptypical}, Diagram \eqref{eq:rightadjointsquare} restricts to a diagram
\[
\begin{tikzcd}{}
\Pol_p^{(j)}(k) \arrow[r,"\Cof^u"] \arrow[d,"U_1"] & \Hopf{k}^{(j)} \arrow[d,"U_2"]\\
\Mod_k^{(j)} \arrow[r,"S"] & \Coalg_{k}^{(j)}
\end{tikzcd}
\]
where $\Coalg_k^{(j)}$ denotes conilpotent coalgebras whose module of primitives is $p$-typical of type $j$.

By taking left adjoint functors, again using Lemma~\ref{lemma:Vofptypical}, the $2$-commutativity of this diagram is equivalent with the $2$-commutativity of
\[
\begin{tikzcd}
\Pol_p^{(j)}(k) & \Hopf{k}^{(j)} \arrow[l,swap,"V"] \\
\Mod_k^{(p)}  \ar[u,"\Fr"] & \Coalg_{k}^{(j)}. \arrow[l,"U"] \arrow[u,"\Fr"]
\end{tikzcd}
\]

In the case $j\neq 0$, we know by Lemma~\ref{lemma:Vofptypical} that $VH=H_{(j)}$ and the claim follows from the fact that $\Sym(\bar C)_{(j)}$ is the free $p$-polar algebra on $U(C)=\bar C$ (cf. the Claim in Lemma~\ref{lemma:nonperiodicchar}).

It remains to consider the ungraded case $j=0$. Observe that using Lemma~\ref{lemma:Vcyclicallygraded}, diagram \eqref{eq:rightadjointsquare} can be refined to a diagram of $\Z/(p-1)$-graded objects:
\[
\begin{tikzcd}
\Pol_p^{(0)}(k) \arrow[r,"\Cof^{c,u}"] \arrow[d,"U_1"] & \Hopf{k}^{c,1} \arrow[d,"U_2"]\\
\Mod_k^{(0)} \arrow[r,"S"] & \Coalg_{k}^{c,1}.
\end{tikzcd}
\]

By taking adjoints as before, we obtain
\[
\begin{tikzcd}
\Pol_p(k)^{(0)} & \Hopf{k}^{c,1} \arrow[l,swap,"V^c"] \\
\Mod_k ^{(0)} \ar[u,"\Fr"] & \Coalg_{k}^{c,1}. \arrow[l,"U"] \arrow[u,"\Fr"]
\end{tikzcd}
\]
where $U(C)=C_1$. For $P$ a $p$-polar algebra concentrated in degree $1$, we have that $\hull^c(P)_{1} = P$, and hence $V^c(\Fr(C)) = \Fr(C)_{1} = \Sym(\bar C)_{1}$ is again the free $p$-polar algebra on $\bar C$.
\end{proof}

\begin{corollary}\label{cor:primitiveofcofree}
Let $H$ be a unipotent Hopf algebra which is unipotent cofree on a $p$-polar $k$-algebra $A$. Then $A$ is isomorphic to the vector space of primitive elements $PH$.
\end{corollary}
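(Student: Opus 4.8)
The plan is to reduce the statement to a direct computation of primitives in a symmetric tensor coalgebra, all of whose substantive content is already contained in Lemma~\ref{lemma:cofreeiscofreeascoalg}.

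First I would recall that, since $H=\Cof^u(A)$ is unipotent cofree on $A$, Lemma~\ref{lemma:cofreeiscofreeascoalg} furnishes an isomorphism of pointed coalgebras $H \cong S(A) = \bigoplus_{i\geq 0}(A^{\otimes i})^{\Sigma_i}$ compatible with the internal grading. The space of primitives $PH = \{x \mid \psi(x) = x\otimes 1 + 1\otimes x\}$ depends only on the underlying pointed coalgebra, so it is enough to identify $P(S(A))$, and the resulting isomorphism $PH \cong P(S(A))$ will automatically be one of graded $k$-vector spaces.

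Next I would compute $P(S(A))$ using the tensor-length grading. Deconcatenation respects this grading, $\psi(S^n(A)) \subseteq \bigoplus_{i+j=n} S^i(A)\otimes S^j(A)$, so the reduced comultiplication $\bar\psi$ is homogeneous of length-degree $0$; hence an element $x = \sum_n x_n$ is primitive if and only if each $x_n$ is. The constant piece $x_0 \in S^0(A) = k$ is never primitive, since the primitivity equation forces $\lambda = 2\lambda$, i.e. $\lambda = 0$, for $x_0 = \lambda\cdot 1$; the length-one piece $S^1(A) = A$ is entirely primitive, as a length-one tensor admits no nontrivial deconcatenation; and for $n \geq 2$ the $(1,n-1)$-component of $\psi(x_n)$ is, under the canonical identification $A^{\otimes n} \cong A\otimes A^{\otimes(n-1)}$, simply $x_n$ itself viewed in $A\otimes (A^{\otimes(n-1)})^{\Sigma_{n-1}}$, hence nonzero whenever $x_n \neq 0$. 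Therefore $P(S(A)) = S^1(A) = A$, which together with the previous step yields the desired isomorphism $A \cong PH$.

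The work is short and the genuinely essential input is Lemma~\ref{lemma:cofreeiscofreeascoalg}; the only point I would state carefully is the last one, namely that no primitives survive in lengths $\geq 2$. One might worry about divided-power phenomena in the cocommutative coalgebra $S(A)$ over a field of characteristic $p$, but these cause no difficulty: the relevant $(1,n-1)$-deconcatenation map is the injective regrouping isomorphism restricted to invariants, so it detects every nonzero symmetric tensor regardless of the characteristic. This makes the argument characteristic-free, and I expect the whole corollary to follow in a few lines.
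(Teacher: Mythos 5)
Your proof is correct and takes essentially the same route as the paper: both reduce to Lemma~\ref{lemma:cofreeiscofreeascoalg}, which identifies $H$ with the symmetric tensor coalgebra $S(A)$ as a pointed coalgebra, and then invoke $P(S(M))\cong M$, a fact the paper simply cites and you verify directly via the tensor-length grading. Your careful check that the injective $(1,n-1)$-deconcatenation component kills all would-be primitives of length $\geq 2$ (in any characteristic) is a sound justification of that cited fact, not a deviation from the paper's argument.
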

\begin{proof}
By the preceding Lemma, $U_2(H) \cong S(U_1(A))$. Applying the functor $P$ and noting that $P(S(M)) \cong M$, we find that $PH \cong U_1(A)$.
\end{proof}

\begin{remark}
It is not true that $V(H)=P(H)$ in general, or that $P(H)$ is a $p$-polar algebra. Also, if $H$ is a unipotent Hopf algebra whose underlying pointed unipotent coalgebra is cofree, $H$ is not necessarily cofree over a $p$-polar $k$-algebra. For example, consider the graded Hopf algebra $H$ dual to $H^*=k[x,y]$ with $|x|=j>0$, $|y|=p^2j$, $x$ primitive and $\psi(y) = y \otimes 1 + 1 \otimes y + \sum_{i=1}^{p-1} \frac1{i!(p-i)!} x^{pi} \otimes x^{p(p-i)}$. Then the primitives $PH$ are dual to the indecomposables $Q(H^*) = \langle x,y\rangle$, i.e. $PH = \langle a,b\rangle$ with $|a|=j$, $|b|=p^2j$. Suppose $H$ was cofree, so that $PH$ is a $p$-polar $k$-algebra by the corollary above. For degree reasons, $PH$ cannot carry any but the trivial $p$-polar algebra structure, $PH \cong \langle a \rangle \times \langle b \rangle$. As a right adjoint, $\Cof^u$ commutes with products and hence $\Cof^u(PH) = \Cof(\langle a \rangle) \otimes \Cof(\langle b \rangle) = (k[x] \otimes k[y])^* = H'$. But $H \not\cong H'$ as Hopf algebras since $P(H^*) = \langle x,x^p,x^{p^2},\dots \rangle$, while $P((H')^*) = \langle x,x^p,\dots,y,y^p,\dots\rangle$, a contradiction.
\end{remark}
\begin{proof}[Proof of Thm.~\ref{thm:lambdapcofree}]
Let $\Lambda_p = k[\theta_{j,0},\theta_{j,1},\dots]$ be the Hopf algebra representing the functor of $p$-typical Witt vectors. Denote by $\eta\colon \Lambda_p \to \Cof^u(V(\Lambda_p))$ the unit of the adjunction. Applying the functor of primitives, since $\Cof^u(V(\Lambda_p))$ is cofree as a coalgebra by Lemma~\ref{lemma:cofreeiscofreeascoalg}, we obtain a map
\[
P\eta\colon P(\Lambda_p) \to V(\Lambda_p).
\]
Now $V(\Lambda_p) = \pol_{(j)}(\Lambda_p) = k\langle \theta_{j,i}^{p^k} \mid i,k \geq 0\rangle$ and $P(\Lambda_p) = k\langle \theta_{j,0}^{p^k} \mid k \geq 0\rangle$, and $P\eta$ is the inclusion map. We see that $P(\Lambda_p)$ is in fact a direct factor of $V(\Lambda_p)$ as a $p$-polar algebra, with a retraction $p\colon V(\Lambda_p) \to P(\Lambda_p)$ given by
\[
p(\theta_{j,i}) \mapsto \begin{cases} \theta_{j,0}; & i=0\\
0; & \text{otherwise} \end{cases}
\]
Taking adjoints, we obtain a map of Hopf algebras $q\colon \Lambda_p \to \Cof^u(P\Lambda_p)$. A map of unipotent Hopf algebras is injective iff it is injective on primitives, and $Pq\colon P\Lambda_p \to P(\Cof^i(P\Lambda_p))\cong P\Lambda_p$ is the identity, so $q$ is injective. By dimension considerations, it must also be surjective.
\end{proof}

\subsection{\texorpdfstring{$F$-modules and $p$-polar rings}{F-modules and p-polar rings}}

\begin{defn}
An \emph{$F$-module} is a positively graded profinite $k$-vector space $N$ with a $k$-linear map $F\colon N \to N(1)$. Denote their category by $\Mod_F$. 
\end{defn}

Note that because of the grading, we could equivalently have defined an $F$-module to be positively graded and of finite type.

The category $\Mod_F$ is anti-equivalent (by taking continuous duals) to the category $\Mod_V$ of $k$-vector spaces $M$ with a $k$-linear map $V\colon M(1) \to M$.

Kuhn showed in \cite[Thm.~2.11]{kuhn:quasi-shuffle} that every $V$-module of finite type over a perfect field $k$ is a sum of modules $M(j,m)$ as follows:
\[
M(2j,m) = \langle x,V^{-1}x,\dots,V^{-m} x\rangle \quad \text{with } |V^{-i}x|=2p^ij, V(V^ix)=V^{i+1}x, Vx=0,
\]
\[
M(2j,\infty) = \langle x,V^{-1}x,\dots\rangle \quad \text{with } |x|=2j,
\]
and
\[
M(2j+1,0) = \langle y \rangle \quad \text{with } |x|=2j+1, \; V=0.
\]

\begin{remark}
Finite type cannot be relaxed, but in the case of countable generation, Ulm's theorem allows a somewhat more complicated classification of $V$-modules \cite[Thm. 2 and the remarks following the proof]{webb:graded-modules}. 
\end{remark}

\begin{corollary}\label{cor:structureofFmodules}
If $k$ is perfect, any $F$-module of finite type is isomorphic to a product of $N(j,m)=M(j,m)^*$. \qed
\end{corollary}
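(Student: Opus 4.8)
The plan is to obtain the statement by dualizing Kuhn's classification of finite-type $V$-modules. Recall from the discussion immediately preceding the corollary that continuous duality furnishes an anti-equivalence $\Mod_F \simeq \Mod_V$. The first step is to check that this anti-equivalence preserves the finite-type condition. If $N$ is an $F$-module of finite type, then each graded piece $N_d$ is finite-dimensional, so each $(N^*)_d = (N_d)^*$ is finite-dimensional and $N^*$ is again of finite type. Moreover $(N(1))^* = N^*(1)$, since $(N(1))_d = N_{pd}$ gives $((N(1))^*)_d = (N_{pd})^* = (N^*(1))_d$; hence the dual of $F\colon N \to N(1)$ is a map $V = F^*\colon N^*(1) \to N^*$, exhibiting $N^*$ as a $V$-module of finite type.

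Next I would apply Kuhn's theorem to $N^*$. Since $k$ is perfect and $N^*$ is a $V$-module of finite type, \cite[Thm.~2.11]{kuhn:quasi-shuffle} gives a decomposition $N^* \cong \bigoplus_\alpha M(j_\alpha, m_\alpha)$, where, by the finite-type hypothesis, only finitely many summands contribute in each degree. Dualizing back, and using that continuous duality turns direct sums into products together with the fact that the double continuous dual of a finite-type object is canonically isomorphic to the object itself, I obtain $N \cong (N^*)^* \cong \prod_\alpha M(j_\alpha, m_\alpha)^* = \prod_\alpha N(j_\alpha, m_\alpha)$, which is the claimed decomposition.

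The only points requiring attention — and the nearest thing to an obstacle — are the bookkeeping facts that the finite-type condition is self-dual in this graded setting, that the shift $(-)(1)$ commutes with duality, and that a direct sum of $V$-modules dualizes to a \emph{product} of $F$-modules rather than a sum. Each of these is routine given the degreewise finite-dimensionality of the objects involved, and together they reduce the corollary to a verbatim application of Kuhn's result, so no further argument is needed.
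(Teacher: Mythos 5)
Your proposal is correct and follows exactly the route the paper intends: the corollary is stated with an immediate \qed precisely because it is the continuous dual of Kuhn's decomposition of finite-type $V$-modules, using the anti-equivalence $\Mod_F \simeq \Mod_V$ set up just before the statement. Your additional bookkeeping (finite type is self-dual degreewise, the shift $(-)(1)$ commutes with duality, and direct sums dualize to products) correctly fills in the details the paper leaves implicit.
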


There is a forgetful functor $U_F\colon \Pro-\pol_p(k) \to \Mod_F$ from profinite $p$-polar $k$-algebras to $F$-modules, given by $U_F(A)=(A,x \mapsto x^p)$.

\begin{lemma}\label{lemma:Fmodulelift}
For every $M \in \Mod_F$ of finite type, there exists an $A \in \Pro-\pol_p(k)$ such that $U_F(A) \cong M$. 
\end{lemma}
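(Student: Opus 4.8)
The plan is to use the structure theorem for $F$-modules of finite type (Corollary~\ref{cor:structureofFmodules}) to reduce the problem to lifting each indecomposable summand, and then to observe that the forgetful functor $U_F$ is compatible with products. Since every $M \in \Mod_F$ of finite type decomposes as a product of the indecomposables $N(j,m) = M(j,m)^*$, and since $U_F$ takes products of profinite $p$-polar algebras to products of $F$-modules (dualizing the fact that $W(k)$-linear duality and the $p$th power map behave well under the relevant (co)products), it suffices to exhibit, for each indecomposable $N(j,m)$, a profinite $p$-polar $k$-algebra $A$ with $U_F(A) \cong N(j,m)$.

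For the actual lifting I would work case by case according to the three families in Kuhn's list. For the odd summand $N(2j+1,0)$, which is one-dimensional with $F=0$, the lift is the exterior algebra on a single odd generator, i.e. $\bigwedge(x)$ with $|x|=2j+1$; here the $p$th power map is automatically zero (as $p$ is odd), and this is a genuine finite-dimensional $p$-polar algebra. For the finite even summand $N(2j,m)$, the dual of $M(2j,m) = \langle x, V^{-1}x, \dots, V^{-m}x\rangle$, the natural candidate is a truncation of the free $p$-polar algebra on one even generator: take the sub-$p$-polar algebra of $k[u]$ (with $|u|=2j$) spanned by $\{u, u^p, \dots, u^{p^m}\}$, or rather its finite-dimensional profinite dual, so that the $p$th power map realizes the shifts $F$ dual to the $V$-shifts. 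For the infinite even summand $N(2j,\infty)$, the lift is the full free $p$-polar algebra $k\langle u^{p^i} \mid i \geq 0\rangle$ on one even generator (the Example in Section~2), viewed as a profinite object; its $p$th power map $u^{p^i}\mapsto u^{p^{i+1}}$ is exactly the map $F$ dual to the infinite $V$-string. In each case one checks that $U_F$ applied to the constructed $p$-polar algebra returns the prescribed $F$-module structure, which is a direct computation once the generators are identified.

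The one genuine subtlety, and the step I expect to require the most care, is the passage between the finite and profinite settings and the precise handling of duals. The $F$-modules are \emph{profinite} by definition, while the natural lifts I wrote down are described most easily as honest (discrete) $p$-polar algebras or their finite-dimensional duals; I must make sure that $U_F$ as defined on $\Pro-\pol_p(k)$ genuinely recovers the profinite $F$-module, and that the product decomposition from Corollary~\ref{cor:structureofFmodules} is matched by a product in $\Pro-\pol_p(k)$ rather than merely a coproduct. Concretely, the key point to verify is that the continuous-dual equivalence between $\Mod_F$ and $\Mod_V$ intertwines the $p$th power map with the Verschiebung on the $V$-side, so that lifting an $F$-module of finite type is the same as lifting the dual $V$-module, and that the free and truncated $p$-polar algebras on a single even generator have exactly the claimed $V$-module structure on their duals. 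Once this dictionary is set up, the three families of lifts above dispatch all indecomposables, and taking the product over the finite set of summands produces the desired $A$ with $U_F(A)\cong M$.
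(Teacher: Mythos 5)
Your proposal takes essentially the same route as the paper's (very terse) proof: decompose $M$ into the indecomposables $N(j,m)$ via Corollary~\ref{cor:structureofFmodules}, equip each with a $p$-polar algebra structure, and take the product in $\Pro-\pol_p(k)$; your explicit descriptions of the three lifts just fill in what the paper leaves implicit. One small correction: the span of $\{u,u^p,\dots,u^{p^m}\}$ inside $k[u]$ is \emph{not} a sub-$p$-polar algebra (it is not closed under the $p$-fold product of the top generator), so the finite even lift should instead be the corresponding sub-$p$-polar algebra of $\pol\bigl(k[u]/(u^{p^{m+1}})\bigr)$.
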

\begin{proof}
Modules of the form $N(j,m)$ carry a unique $p$-polar algebra structure $A(j,m)$. For an arbitrary $F$-module $M$, which by Cor.~\ref{cor:structureofFmodules} is isomorphic to a product $\prod_{i \in I} N(j_i,m_i)$, define $A = \prod_{i \in I} A(j_i,m_i)$. Then $U_F(A) \cong M$.
\end{proof}

Note that this construction is not functorial because it depends on the chosen decomposition of $M$. It is not true that any two $p$-polar algebras with isomorphic underlying $F$-module are isomorphic.

We obtain the following slight variation of Kuhn's theorem:

\begin{corollary}
Given any $F$-module $M$ of finite type, there is a formal connected Hopf algebra $H$ which is cofree over a profinite $p$-polar algebra and such that $PH\cong M$, unique up to isomorphism of formal Hopf algebras cofree over profinite $p$-polar algebras.
\end{corollary}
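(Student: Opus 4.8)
The statement packages two claims about the functor $P$ sending a formal Hopf algebra to its $F$-module of primitives: that $P$ is essentially surjective onto finite-type $F$-modules when restricted to Hopf algebras cofree over profinite $p$-polar algebras (existence), and that it is injective on isomorphism classes of such (uniqueness).

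For existence, given $M$ I would first apply Lemma~\ref{lemma:Fmodulelift} to choose a profinite $p$-polar algebra $A$ with $U_F(A)\cong M$, and set $H:=\Reg{\hat\Fr^c(A)}$, the formal connected Hopf algebra cofree over $A$ furnished by Lemma~\ref{lemma:formalfreeadjoint}. It then remains to compute $PH$. Here I would prove the formal-connected counterpart of Lemma~\ref{lemma:cofreeiscofreeascoalg} and Cor.~\ref{cor:primitiveofcofree}: as a complete coalgebra $H$ is the completed symmetric coalgebra on the underlying vector space of $A$, whence $PH\cong A$ as vector spaces; and the restriction $x\mapsto x^{p}$ on $PH$ is carried, under this identification, to the $p$-polar Frobenius $x\mapsto\mu(x,\dots,x)$. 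Thus $PH\cong U_F(A)\cong M$ as $F$-modules, as desired.

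For uniqueness, let $H=\hat\Fr^c(A)$ and $H'=\hat\Fr^c(A')$ be two cofree Hopf algebras with $PH\cong PH'\cong M$; by the previous paragraph this gives $U_F(A)\cong U_F(A')$. The crux is that this does \emph{not} force $A\cong A'$ as $p$-polar algebras---the mixed products $\mu(x_1,\dots,x_p)$ enter, for example, the Witt-addition polynomials but are invisible to $U_F$, and the remark after Lemma~\ref{lemma:Fmodulelift} exhibits genuinely non-isomorphic lifts of one $F$-module (already for $p=2$ one finds such $A,A'$ differing only in a single value $\mu(x_1,x_2)$). The real content of uniqueness is therefore that $\hat\Fr^c$ collapses this extra data: I must show $\hat\Fr^c(A)$ depends, up to isomorphism, only on $U_F(A)$.

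I would establish this by passing to Dieudonn\'e modules. By Thm.~\ref{thm:dieudonneoffree} one has $\DieuFor(\hat\Fr^c(A))\cong CW^c(A)$, and since $\DieuFor$ is an equivalence (Thm.~\ref{thm:generaldieudonne}) it suffices to show that the connected Dieudonn\'e module $CW^c(A)\in\DDModFc{k}$ depends only on $U_F(A)$. The plan is to identify $CW^c(A)$ as the free $F$-nilpotent connected Dieudonn\'e module generated by the $F$-module $U_F(A)$: its Frobenius is induced by the $p$-polar Frobenius, while its $W(k)$-module structure and Verschiebung are freely adjoined subject to $FV=VF=p$ and completed to force $F$-nilpotence. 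This is the step I expect to be hardest, since it asserts precisely that the mixed-product data surviving in the Witt addition becomes invisible after forming the free connected formal group. As a useful reduction and sanity check, Cor.~\ref{cor:structureofFmodules} decomposes $M\cong\prod_i N(j_i,m_i)$; because $\hat\Fr^c$ sends products of $p$-polar algebras to tensor products of Hopf algebras (Lemma~\ref{lemma:formalfreeadjoint}), the standard lift yields $H\cong\bigotimes_i\hat\Fr^c(A(j_i,m_i))$, and each indecomposable $N(j,m)$ carries a \emph{unique} $p$-polar structure by Lemma~\ref{lemma:Fmodulelift}; the freeness statement above is what upgrades this to an isomorphism $\hat\Fr^c(A)\cong\hat\Fr^c(A')$ for arbitrary, possibly non-standard, lifts.
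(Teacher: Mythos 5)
Your existence argument is exactly the paper's: lift $M$ to a profinite $p$-polar algebra via Lemma~\ref{lemma:Fmodulelift}, form the cofree Hopf algebra, and identify the primitives using (the formal analogue of) Lemma~\ref{lemma:cofreeiscofreeascoalg} and Cor.~\ref{cor:primitiveofcofree}. The uniqueness half is where your proposal breaks down. You correctly reduce uniqueness to showing that $\hat\Fr{}^c(A)\cong\hat\Fr{}^c(A')$ whenever $U_F(A)\cong U_F(A')$, and you correctly observe that this does not follow from $A\cong A'$ (which may fail). But your proposed mechanism --- identifying $CW^c(A)$ as the \emph{free} $F$-nilpotent connected Dieudonn\'e module generated by $U_F(A)$ --- is both unproven (you flag it yourself as the hard step and give no argument) and, more seriously, cannot be correct as stated. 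A genuine universal property of that kind would make $A\mapsto CW^c(A)$, and hence $A \mapsto \hat\Fr{}^c(A)$, factor \emph{functorially} through $\Mod_F$; but the paper stresses that the lift of Lemma~\ref{lemma:Fmodulelift} is not functorial, and the $\GL_l(\Z/p^2)\to\GL_l(\Z/p)$ section argument in the final section shows that precisely this kind of functorial factorization through $\Mod_F$ with $P\circ H\cong\id$ is impossible. So the one step carrying the entire content of uniqueness is replaced by a characterization that would prove too much.

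The paper's actual route for uniqueness is external: it invokes Kuhn's theorem \cite[Thm.~1.20]{kuhn:quasi-shuffle}, which says that a formal connected Hopf algebra that is cofree as a formal coalgebra and \emph{split} (the inclusion $PH\to\tilde H$ admits a retraction of $F$-modules) is determined up to isomorphism by the $F$-module $PH$. The remaining work is then only to check that Hopf algebras cofree over profinite $p$-polar algebras satisfy these two hypotheses: cofreeness as a coalgebra is Lemma~\ref{lemma:cofreeiscofreeascoalg}, and splitness follows from Cor.~\ref{cor:primitiveofcofree}, since the composite $A\xrightarrow{\cong}P\Cof^u(A)\to\widetilde{\Cof^u(A)}\to A$ is the identity. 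Crucially, Kuhn's statement is an isomorphism-classification result, not a universal property, which is why it coexists with the non-functoriality phenomenon; if you want to avoid citing Kuhn you would have to reprove a classification of that strength, and your Dieudonn\'e-theoretic sketch as written does not do so.
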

\begin{proof}
Given $M$, choose a $p$-polar algebra structure on $M$ as in Lemma~\ref{lemma:Fmodulelift} and define $H=\Cof^u(M)$. Then $PH\cong M$ as $F$-modules. Kuhn showed in \cite[Thm.~1.20]{kuhn:quasi-shuffle} that $H$ is unique among all formal Hopf algebras which are cofree as formal connected coalgebras and split. Here, a formal Hopf algebra $H$ is called split if the inclusion $PH \to \tilde H$ has a retraction as $F$-modules, where $\tilde H$ is the augmentation coideal.

Lemma~\ref{lemma:cofreeiscofreeascoalg} shows that $\Cof^u(A)$ is always cofree as a coalgebra, and by Cor.~\ref{cor:primitiveofcofree}, the composition
\[
A \xrightarrow{\cong} P\Cof^u(A)) \to \widetilde{\Cof^u(A)} \to A
\]
is the identity. This shows that Hopf algebras which are cofree over profinite $p$-polar algebras satisfy Kuhn's condition.
\end{proof}

\section{The cohomology of free iterated loop spaces}\label{sec:iteratedloops}

Throughout this section, let $X$ be a pointed, connected CW-complex and, for simplicity, $k=\F_p$. All homology and cohomology is taken with $\F_p$-coefficients.

Classical iterated loop space theory tells us that
\[
H^*(\Omega^n\Sigma^nX) \cong \bigoplus_{k \geq 0} H^*((C_k)_+ \times_{\Sigma_k} X^{\wedge k}),
\]
where $C_k$ is the $k$th space of the little $n$-disks operad. This splitting is induced from the stable Snaith splitting. It is emphatically \emph{not} a splitting of Hopf algebras (not even of algebras). By \cite[Chapter III]{cohen-lada-may}, $H^*(\Omega^n\Sigma^n X)$ is a functor $D_n$ of the algebra $H^*(X)$ as Hopf algebras. For instance, $D_1(A) = \Cof^{nc}(A)$ is the cofree \emph{non-cocommutative} Hopf algebra on $A$. 

In \cite{kuhn:quasi-shuffle}, Kuhn showed that for connected spaces $X$ of finite type, $H^*(\Omega\Sigma X)$ only depends on the stable homotopy type of $X$ in the sense that if $H^*(X) \cong H^*(Y)$ as $F$-modules then $D_1(H^*(X)) \cong D_1(H^*(Y))$. To each $F$-module $M$ of finite type, he assigns a Hopf algebra $H(M)$ that is cofree with primitives $P(H(M)) \cong M$. It is, however, not true that $H$ is a functor (and therefore that $D_1$ factors through the category of $F$-modules), as the following example shows.

\begin{example}
(This example arose from a discussion with Nick Kuhn.)

Let $M = \F_p\langle x, Fx\rangle$ be the two-dimensional $\F_p$-module generated by a class $x$ in degree $2$ and a class $Fx$ in degree $2p$ such that $F(x)=Fx$ and $F(Fx)=0$.
Then $H(M) = \F_p[x_{(0)},x_{(1)},\dots]/(x_{(i)}^{p^2})$ with comultiplication
\[
\Delta(x_n) = \sum_{i+j=n} x_i \otimes x_j,
\]
where $x_{a_0+a_1p+\cdots+a_np^n} = \frac1{a_0!\cdots a_n!}x_{(0)}^{a_0}x_{(1)}^{a_1}\cdots x_{(n)}^{a_n}$ \cite[Theorem~3.1]{kuhn:quasi-shuffle}. Furthermore, $H(M^l) \cong \prod_{i=1}^l H(M)$, where the product is taken in the category of non-cocommutative Hopf algebras.
If $H$ could be made functorial with $P \circ H \cong \id$, we would have the following sequence of functors:
\[
\Mod_F \xrightarrow{H} \Hopf{\F_p}^{nc} \xrightarrow{\ab} \Hopf{\F_p} \xrightarrow{D} \DMod{\F_p},
\]
where $\ab$ denotes the functor associating to a non-cocommutative Hopf algebra $H \in \Hopf{k}^{nc}$ its maximal cocommutative Hopf algebra. This functor is right adjoint to the inclusion functor and hence sends products of non-cocommutative Hopf algebras to products of cocommutative Hopf algebras, i.e. tensor products.
We have that
\[
D(H(M)) \cong \Bigl\{ 
\begin{tikzcd}\Z/p \arrow[r,bend left=10,"p"] & \Z/p^2 \arrow[l,bend left=10,"1"] \arrow[r,bend left=10,"p"] & \Z/p^2 \arrow[l,bend left=10,"1"] \arrow[r,bend left=10,"p"] & \cdots \arrow[l,bend left=10,"1"] 
\end{tikzcd}
\Bigr\}
\]
where arrows to the left are $V$ and arrows to the right are $F$. 

Note that $\End_{\Mod_F}(M) \cong \Z/p$ and $\End_{\DMod{\F_p}}(D(H(M))) \cong \Z/p^2$, and hence $\Aut_{\Mod_F}(M^l) \cong \GL_l(\Z/p)$ and $\Aut_{\DMod{\F_p}}(D(H(M^l))) \cong \GL_l(\Z/p^2)$. Since $P \circ H \cong \id$, functoriality would imply a section of the mod-$p$ reduction homomorphism $\GL_l(\Z/p^2) \to \GL_l(\Z/p)$. However, by \cite[p.~22]{sah:cohomology-of-split-group-extensions-II}, this map splits exactly when $l=1$ or when $l=2$ and $p \leq 3$ or when $l=3$ and $p=2$. So there is a counterexample for every prime.
\end{example}

Our result gives a functorial factorization through a category that retains a little more structure than just a Frobenius. 

To prove Thm.~\ref{thm:omegansigman}, we need to study the algebraic structure of $H_*(\Omega^{n+1}\Sigma^{n+1} X)$ more closely. For the reader's convenience, we recall the necessary details from \cite[III.1--4]{cohen-lada-may}, with some corrections and additions from \cite{wellington:unstable-adams}. An $(n+1)$-fold loop structure on a space $X$ gives $H=H_*(\Omega^{n+1}\Sigma^{n+1}X)$ the structure of an algebra over the Dyer-Lashof algebra satisfying certain unstability conditions, quite analogously to the structure of cohomology rings as algebras over the Steenrod algebra. However, for finite $n$, there is an additional piece of structure: the so-called Browder operations $[-,-]\colon H_{q} \otimes H_{r} \to H_{q+r+n}$. Additionally, the top Dyer-Lashof $Q^{\frac{q+n}2}$operation is not linear; its nonlinearity is measured by said Browder operations. The Browder operations together with the top Dyer--Lashof operation gives $H_*$ the structure of a ($n$-shifted) restricted Lie algebra. We now outline the algebraic structure on the homology of an $(n+1)$-fold loop space, omitting details about signs to avoid too much clutter. The exact formulas are of little relevance here -- what is mostly important is what part of the algebraic structure depends on what other parts. For simplicity, we assume that $p>2$; the case of $p=2$ is similar but slightly simpler.

\begin{defn}
Let $p=\operatorname{char}(k)>2$ and let $M$ be a graded $k$-vector space. An $R_n$-structure on $M$ consists of:
\begin{enumerate}
	\item Operations $Q^r\colon M_q \to M_{q+2r(p-1)}$ for $0 \leq 2r \leq q+n$;
	\item Operations $\beta Q^r\colon M_q \to M_{q+2r(p-1)-1}$ for $1 \leq 2r \leq q+n$;
	\item A \emph{Browder operation} $[-,-]\colon M_q \otimes M_r \to M_{q+r+n}$
\end{enumerate}
satisfying the following axioms:
\begin{enumerate}[resume]
	\item $[-,-]$ is bilinear;
	\item $[x,y]=\pm [y,x]$;
	\item $[x,[y,z]] \pm [y,[z,x]] \pm [z,[x,y]]=0$;
	\item $[x,Q^r y]=0$ for $2r<q+n$ and $[x,\beta Q^r y]=0$ for $2r\leq q+n$;
	\item $[x,Q^{\frac{q+n}2}y] = [y,[y,\cdots,[y,x]\cdots]$ ($p$-fold bracket) for $q+n$ even;
	\item $Q^rx=0$ if $2r<|x|$; $\beta Q^rx=0$ if $2r\leq |x|$;
	\item All $Q^r$ and $\beta Q^r$ are additive except for $Q^{\frac{q+n}2}$ when $q+n$ is even;
	\item $Q^r(\lambda x) = \lambda^p Q^r(x)$; $\beta Q^r(\lambda x) = \lambda^p \beta Q^r(x)$
	\item $Q^{\frac{q+n}2}(x+y) = Q^{\frac{q+n}2}(x)+Q^{\frac{q+n}2}(y)+(x^p+y^p-(x+y)^p)$ when $q+n$ is even; here the parenthesized summand is understood as evaluated in the universal enveloping (noncommutative) algebra of the Lie bracket $[-,-]$ and can be shown to lie in $M$;
	\item $\beta^\epsilon Q^rQ^s = \sum_{i> 0} \pm \binom{(p-1)(r-i)-1}{i-ps-1}
	 \beta^\epsilon Q^iQ^{r+s-i}$ for $r > ps$, $\epsilon \in \{0,1\}$;
	\item $Q^r\beta Q^s = \sum_{i> 0} \pm \Bigl(\binom{(p-1)(r-i)}{i-ps} \beta Q^{i} Q^{r+s-i} \pm \binom{(p-1)(r-i)-1}{i-ps} Q^{i}\beta Q^{r+s-i}\Bigr)$ for $r \geq ps$;
	\item $\beta Q^r\beta Q^s = \sum_{i>0} \pm \binom{(p-1)(r-i)-1}{i-ps} \beta Q^{i}\beta Q^{r+s-i}$ for $r \geq ps$.
\end{enumerate}
An \emph{$R_n$-algebra} is an $R_n$-module $M$ with a commutative, unital multiplication satisfying
\begin{enumerate}[resume]
	\item $[x,1]=0$;
	\item $[x,yz] = [x,y]z \pm y[x,z]$;
	\item $Q^r1 = 0$ and $\beta Q^r 1 = 0$ for $r>0$;
	\item $Q^rx = x^p$ if $2r=|x|$;
	\item $Q^r(xy) = \sum_{i+j=r} Q^i(x)Q^j(y)$ for $2r<q+n$;
	\item $\beta Q^r(xy) = \sum_{i+j=r} (\beta Q^i(x)Q^j(y)+ Q^i(x)\beta Q^j(y))$ ;
	\item $Q^{\frac{q+n}2}(xy) = \sum_{i+j=\frac{q+n}2}Q^{i}(x)Q^{j}(y)+\Gamma(x,y)$ when $q+n$ is even, where $\Gamma$ is a certain function of $x$ and $y$ constructed from the multiplication and Browder bracket in $M$;
\end{enumerate}
An \emph{$R_n$-Hopf algebra} is an $R_n$-algebra $M$ with a cocommutative comultiplication $\Delta$ making $M$ into a Hopf algebra and satisfying
\begin{enumerate}[resume]
	\item $\Delta Q^r(x) = Q^r(\Delta(x))$, where $Q^r\colon M \otimes M \to M \otimes M$ is given by the external Cartan formula: $Q^r(x \otimes y) = \sum_{i+j=r}  Q^i(x) \otimes Q^j(y)$; \label{ax:diagonalexternalcartan}
	\item $\Delta \beta Q^r(x) = \sum_{i+j=r} \sum_{(x)} (\beta Q^i(x') \otimes Q^j(x'')+Q^i(x') \otimes \beta Q^j(x''))$ \label{ax:diagonalexternalbetacartan}
	\item $\Delta [x,y] = \sum_{(x)} \sum_{(y)} \Bigl(\pm [x',y'] \otimes y''x'' \pm  x'y' \otimes [x'',y'']\Bigr)$.
\end{enumerate}
\end{defn}
(Axiom~\eqref{ax:diagonalexternalcartan} seems to be formulated in a convoluted way, but the formula analogous to \eqref{ax:diagonalexternalbetacartan} would be wrong when $r=\frac{q+n}2$ because of the nonlinearity of $Q^r$.)

Denote by $\Hopf{R_n}$ the category of $R_n$-Hopf algebras thus equipped. There is a forgetful functor
\begin{equation}
\Hopf{R_n} \xrightarrow{U} \Hopf{k} \to \Coalg_k, \label{eq:forgetfulDyerLashof}
\end{equation}

It is shown in \cite[Thm.~3.2]{cohen-lada-may} that this composite has a left adjoint $W$ such that
\[
W(H_*(X)) \cong H_*(\Omega^{n+1}\Sigma^{n+1} X) \quad \text{for $X$ connected, $n \geq 1$.}
\]

However, the left adjoint is not constructed as a composition of adjoint functors according to \eqref{eq:forgetfulDyerLashof}. Thus we need to show:
\begin{lemma}\label{lemma:freeRnHopfalgebra}
The functor $U\colon \Hopf{R_n} \to \Hopf{k}$ has a left adjoint $F$.
\end{lemma}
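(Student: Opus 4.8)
The plan is to apply the adjoint functor theorem in the same form as in the proof of Lemma~\ref{lemma:affinefreeadjoint}, namely \cite[Thm.~1.66]{adamek-rosicky:presentable-accessible}: a functor between locally presentable categories is a right adjoint if and only if it preserves all limits and is accessible. Thus I would verify three things: that $\Hopf{R_n}$ and $\Hopf{k}$ are locally presentable, that $U$ preserves limits, and that $U$ preserves $\omega$-filtered colimits. Once these are in place, the theorem produces a right adjoint structure on $U$, i.e. the desired left adjoint $F$.

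Local presentability is the structural input. The fundamental theorem of coalgebras exhibits every cocommutative coalgebra as the filtered union of its finite-dimensional subcoalgebras, so $\Coalg_k$ is locally finitely presentable; the bicommutative Hopf algebras underlying objects of $\Hopf{k}$ are the commutative monoid objects of $\Coalg_k$ equipped with an antipode, which makes $\Hopf{k}$ locally presentable as well. An object of $\Hopf{R_n}$ is such a Hopf algebra together with the operations $Q^r$, $\beta Q^r$ and $[-,-]$ of the definition, subject to the listed axioms. Because of the grading, each operation is total in every fixed degree, and although $Q^{(q+n)/2}$ fails to be additive, its deviation from additivity is a polynomial expression in the bracket and the multiplication; thus the $R_n$-structure is described by a set of finitary (not necessarily linear) operations and equational axioms internal to $\Hopf{k}$. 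The category of models of such an essentially algebraic theory over a locally presentable base is again locally presentable.

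It then remains to check the two exactness properties. Filtered colimits in both categories are computed on underlying coalgebras --- filtered colimits of coalgebras are filtered colimits of the underlying graded vector spaces --- and every operation is finitary, so it commutes with them; hence $U$ preserves $\omega$-filtered colimits and is accessible. For limits I would show that $U$ \emph{creates} them: a limit cone computed in the complete category $\Hopf{k}$ carries a unique $R_n$-structure making the cone maps $R_n$-morphisms, with each operation induced from the operations on the diagram via the universal property, and the equational axioms then hold on the limit because they hold termwise. The one point requiring care --- and the main obstacle --- is that products and equalizers of Hopf algebras are not computed on underlying vector spaces, so the operations on a limit must be produced from the universal property rather than defined elementwise; verifying that this induced structure is well defined and satisfies the axioms is where the work lies, accessibility being comparatively routine. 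Finally, \cite[Thm.~1.66]{adamek-rosicky:presentable-accessible} yields the left adjoint $F$, and composing $F$ with the free bicommutative Hopf algebra functor on a coalgebra (left adjoint to $\Hopf{k}\to\Coalg_k$) recovers Cohen--Lada--May's functor $W$ as the left adjoint of the composite~\eqref{eq:forgetfulDyerLashof}.
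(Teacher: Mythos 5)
Your proposal is correct and follows essentially the same route as the paper: an adjoint functor theorem for locally presentable categories, with $U$ shown to create limits (the paper spells this out via the tensor-product/external-Cartan-formula description of products, equalizers, and the diagonal condition for cofiltered limits) and to preserve filtered colimits. The only minor divergence is that the paper deduces local presentability of $\Hopf{R_n}$ from monadicity over $\Coalg_k$ (via Cohen's explicit free construction or Beck's theorem) rather than from your ``essentially algebraic theory'' framing, but both are standard and equivalent in effect.
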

\begin{proof}
While it is possible to give an explicit construction, we give a proof using Freyd's adjoint functor theorem.

The category $\Hopf{k}$ has all small limits and we will show that $U$ creates (and thus preserves) limits in $\Hopf{R_n}$. Given two $R_n$-Hopf algebras $H_1$ and $H_2$, the usual tensor product $H_1 \otimes_k H_2$, which is the biproduct in $\Hopf{k}$, becomes an $R_n$-Hopf algebra by the external Cartan formulas for $Q^i$, $\beta Q^i$, and the Browder bracket. Given two maps $f,g\colon H_1 \to H_2$ of $R_n$-Hopf algebras, their equalizer (as graded sets) is a $k$-Hopf algebra and also the equalizer in $\Hopf{k}$. In fact, it is an $R_n$-Hopf algebra because $f$ and $g$ commute with the $R_n$-structure. Since $U$ is conservative (reflects isomorphisms), it thus creates finite limits.
Finally, given an inverse system $H\colon I \to \Hopf{R_n}$, its limit in $\Hopf{k}$ is given as
\[
\lim_{I} (U \circ H) = \{ x \in \lim H \mid \Delta(x) \in \lim H \otimes \lim H\}, 
\]
where the limits on the right hand side are limits of graded sets (equivalently, graded abelian group or graded commutative algebras). The diagonal Cartan formulas show that this limit is closed under $Q^r$, $\beta Q^r$, and the Browder bracket, so that $\lim_I H$ is created by $\lim_I U \circ H$.

Furthermore, $U$ commutes with filtered colimits, which are also created in graded sets.

To apply Freyd's special adjoint functor theorem, we show that $\Hopf{k}$ and $\Hopf{R_n}$ are locally presentable categories. This is well-known for $\Hopf{k}$ (it is, after all, equivalent to Dieudonné modules). Since $\Coalg_k$ is locally presentable and $\Hopf{R_n}$ is monadic over it (either by the explicit form of Cohen's construction of the free $R_n$-Hopf algebra on a coalgebra or an application of Beck's monadicty theorem), it is also locally presentable \cite[2.78]{adamek-rosicky:presentable-accessible}.
\end{proof}

\begin{proof}[Proof of Thm.~\ref{thm:omegansigman}]
By \cite[Thm.~3.2]{cohen-lada-may},
\[
H_*(\Omega^{n+1}\Sigma^{n+1} X) \cong W(H_*(X)) \underset{\text{Lemma~\ref{lemma:freeRnHopfalgebra}}}\cong F(\Fr(H_*(X))).
\]
Dually,
\[
H^*(\Omega^{n+1}\Sigma^{n+1} X) \cong F^*(\Cof(H^*(X))),
\]
where $F^*$ is the functor $F$ on opposite categories. By the formal case of Thm.~\ref{thm:freeschemefactorization}, the desired factorization is given by
\[
A \mapsto F^*(\Cof(A)) \quad \text{for $A \in \pol_p(k)$.}
\]
\end{proof}

We obtain the following generalization of Kuhn's result, a strengthening of Cor.~\ref{cor:kuhnthm} from the introduction:

\begin{corollary}
For $n \geq 0$, and spaces $X$ for finite type, the $R_n$-Hopf algebra $H^*(\Omega^{n+1}\Sigma^{n+1}X)$ only depends on the $F$-module $H^*(X)$ (up to noncanonical isomorphism), and in particular only on the stable homotopy type of $X$.
\end{corollary}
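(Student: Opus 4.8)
The plan is to combine the explicit description of the relevant $R_n$-Hopf algebra obtained inside the proof of Theorem~\ref{thm:omegansigman} with the uniqueness statement for cofree-over-$p$-polar Hopf algebras recorded in the preceding corollary. Recall that the proof of Theorem~\ref{thm:omegansigman} produces, as an $R_n$-Hopf algebra, an isomorphism $H^*(\Omega^{n+1}\Sigma^{n+1}X) \cong F^*(\Cof^u(A))$, where $A=\pol(H^*(X))$ is the $p$-polar algebra underlying $H^*(X)$ and $F^*$ is the free $R_n$-Hopf algebra functor of Lemma~\ref{lemma:freeRnHopfalgebra}. Since $F^*$ is a functor on (formal) Hopf algebras, it sends Hopf-algebra isomorphisms to isomorphisms of $R_n$-Hopf algebras; so it is enough to prove that the Hopf algebra $\Cof^u(A)$ depends, up to isomorphism, only on the underlying $F$-module $U_F(A)=U_F(H^*(X))$.

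To establish this, I would first note that for a finite-type space $X$ the algebra $A=\pol(H^*(X))$ is (pro)finite and $U_F(A)$ is an $F$-module of finite type. By Lemma~\ref{lemma:cofreeiscofreeascoalg} the Hopf algebra $\Cof^u(A)$ is cofree as a coalgebra; by Corollary~\ref{cor:primitiveofcofree} its primitives satisfy $P(\Cof^u(A))\cong U_F(A)$ as $F$-modules; and, exactly as checked in the proof of the preceding corollary, $\Cof^u(A)$ is split in the sense of Kuhn. The uniqueness theorem \cite[Thm.~1.20]{kuhn:quasi-shuffle}, as packaged in that corollary, then asserts that any two split formal Hopf algebras which are cofree over profinite $p$-polar algebras and have isomorphic $F$-modules of primitives are themselves isomorphic. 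Applying this to $A=\pol(H^*(X))$ and $B=\pol(H^*(Y))$ for two finite-type spaces with $U_F(H^*(X))\cong U_F(H^*(Y))$ gives $\Cof^u(A)\cong \Cof^u(B)$ as Hopf algebras, and applying $F^*$ yields $H^*(\Omega^{n+1}\Sigma^{n+1}X)\cong H^*(\Omega^{n+1}\Sigma^{n+1}Y)$ as $R_n$-Hopf algebras. This proves that the $R_n$-Hopf algebra depends only on the $F$-module of $H^*(X)$.

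For the last clause, I would invoke the unstable relation between $p$th powers and the top Steenrod operation: the $F$-module structure $F\colon H^*(X)_i\to H^*(X)_{pi}$ is $x\mapsto x^p$, which equals $P^{|x|/2}(x)$ for $p$ odd (and $\mathrm{Sq}^{|x|}(x)$ for $p=2$), and vanishes on odd-degree classes. As the top Steenrod operation is stable, any equivalence $\Sigma^\infty X\simeq \Sigma^\infty Y$ induces an isomorphism of Steenrod modules $H^*(X)\cong H^*(Y)$, hence in particular an isomorphism of $F$-modules. Together with the previous paragraph this shows that $H^*(\Omega^{n+1}\Sigma^{n+1}X)$, as an $R_n$-Hopf algebra, depends only on the stable homotopy type of $X$.

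The crux is conceptual and sits in the second paragraph: the $p$-polar multiplication on $A$ carries strictly more information than its underlying $F$-module (by the remark after Lemma~\ref{lemma:Fmodulelift}, non-isomorphic $p$-polar algebras may share an $F$-module), so one must argue that this surplus is invisible to $\Cof^u(A)$ up to Hopf-algebra isomorphism; this is precisely the content imported from Kuhn, and the step to verify with care is that $\Cof^u(A)$ really meets the split-cofree hypotheses of \cite[Thm.~1.20]{kuhn:quasi-shuffle}. A secondary, purely bookkeeping matter is the translation between the homological setting, where the functors $W$, $F$, and the various free and cofree constructions naturally live, and the cohomological formulation of the statement; the finite-type hypothesis makes this a routine application of the duality between formal and affine Hopf algebras.
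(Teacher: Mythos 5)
Your argument is essentially sound for $n \geq 1$ and follows the same overall strategy as the paper: reduce to the claim that the cofree Hopf algebra on $\pol(H^*(X))$ depends, up to isomorphism, only on the underlying $F$-module, and then push the isomorphism through the free $R_n$-Hopf algebra functor $F^*$. The difference is in which of Kuhn's results you invoke for that reduction. The paper quotes \cite[Thm.~1.12]{kuhn:quasi-shuffle}, which says directly that the cofree \emph{non-cocommutative} Hopf algebra $\Cof^{nc}(A)$ depends only on the $F$-module of $A$, and then passes to the cocommutative $\Cof$ via the right adjoint $\ab$ to the inclusion of cocommutative Hopf algebras. You instead go through \cite[Thm.~1.20]{kuhn:quasi-shuffle} as packaged in the paper's earlier corollary on $F$-modules: you verify that $\Cof^u(A)$ is cofree as a coalgebra (Lemma~\ref{lemma:cofreeiscofreeascoalg}), split, and has primitives $U_F(A)$ (Cor.~\ref{cor:primitiveofcofree}), and conclude uniqueness from there. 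Both routes are valid; yours leans more heavily on the machinery the paper has already built and makes the role of the split-cofree hypothesis explicit, at the cost of needing the (asserted but not spelled out) fact that the isomorphism $P(\Cof^u(A))\cong U_F(A)$ respects Frobenius --- though the paper itself uses this without further comment. Your treatment of the final clause (stability of the top Steenrod operation giving the $F$-module from the stable type) is more explicit than the paper's, which leaves it implicit.

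The one genuine omission is the case $n=0$, which the statement includes. Theorem~\ref{thm:omegansigman} and its proof are only stated for $n\geq 1$, and for $n=0$ the factorization $H^*(\Omega\Sigma X)\cong F^*(\Cof^u(A))$ through the \emph{cocommutative} cofree Hopf algebra is simply not available: $H^*(\Omega\Sigma X)$ is the cofree non-cocommutative Hopf algebra $\Cof^{nc}(H^*(X))$ dual to the tensor algebra, and there is no Browder/Dyer--Lashof structure to feed into $F^*$. The paper disposes of this case by citing \cite[Cor.~7.2]{kuhn:quasi-shuffle} directly; you would need to add that (or argue via Kuhn's Thm.~1.12 applied to $\Cof^{nc}$) to cover the full range $n\geq 0$.
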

\begin{proof}
The case $n=0$ was proved in \cite[Cor.~7.2]{kuhn:quasi-shuffle}. For $n \geq 1$, \cite[Thm.~1.12]{kuhn:quasi-shuffle} shows that $\Cof^{nc}(A)$ only depends on the structure of $A$ as an $F$-module, up to noncanonical isomorphism. Here $\Cof^{nc}(A)$ is the cofree non-cocommutative formal Hopf algebra on $A$, but the cocommutative case follows since the inclusion of cocommutative into not necessarily cocommutative Hopf algebras has a right adjoint (cf. \cite[Remark 3.13]{kuhn:quasi-shuffle}). Since $H^*(\Omega^{n+1}\Sigma^{n+1}) = F^*(\Cof(H^*X))$, the result follows.  
\end{proof}

\bibliographystyle{alpha}
\bibliography{bibliography}

\end{document}